\documentclass{amsart}

\usepackage{amsmath}
\usepackage{amssymb}
\usepackage{xypic}
\usepackage{enumerate}
\usepackage{epic}
\theoremstyle{plain}
\setcounter{tocdepth}{2}
\numberwithin{equation}{section}

        \newtheorem{thm}[equation]{Theorem}
        \newtheorem{cor}[equation]{Corollary}
        \newtheorem{lem}[equation]{Lemma}
        \newtheorem{prop}[equation]{Proposition}

\theoremstyle{definition}
        \newtheorem{defn}[equation]{Definition}

        \newtheorem{example}[equation]{Example}
        \newtheorem{rem}[equation]{Remark}

\renewcommand{\proof}{{\bf Proof:\ }}
\newcommand{\Endproof}{\hspace*{\fill} $\Box$ \vspace{1ex} \noindent }

\newcommand{\A}{\mathcal{A}}
\newcommand{\B}{\mathcal{B}}
\newcommand{\Scal}{\mathcal{S}}
\newcommand{\iso}{\xrightarrow{\sim}}
\newcommand{\rhob}{\bar{\rho}}
\newcommand{\PGL}{{\rm PGL}}
\newcommand{\ZZ}{\mathbb{Z}}
\newcommand{\proj}{\mathbb{P}}
\newcommand{\LL}{\mathbb{L}}
\newcommand{\nats}{\mathbb{N}}
\newcommand{\ints}{\mathbb{Z}}
\newcommand{\rats}{\mathbb{Q}}
\newcommand{\OO}{\mathcal{O}}
\newcommand{\Aut}{{\rm Aut}}
\newcommand{\Gal}{{\rm Gal}}

\newcommand{\Spec}{{\rm Spec\,}}
\newcommand{\Frac}{{\rm Frac}}
\newcommand{\lcm}{{\rm lcm}}
\newcommand{\ol}[1]{\overline{#1}}
\newcommand{\mc}[1]{\mathcal{#1}}
\newcommand{\newmatrix}[4]{
    \left( \begin{array}{cc} #1 & #2 \\  #3 &
    #4 \end{array} \right)}
\newcommand{\Xb}{\bar{X}}
\newcommand{\Yb}{\bar{Y}}
\newcommand{\Vb}{\bar{V}}
\newcommand{\xb}{\bar{x}}
\newcommand{\zb}{\bar{z}}

\begin{document}

\title[Explicit resolution of weak wild quotient singularities]{Explicit resolution of weak wild quotient singularities on
arithmetic surfaces}


\author{Andrew Obus}
\address{Baruch College}
\curraddr{1 Bernard Baruch Way.  New York, NY 10010}
\email{andrewobus@gmail.com}
\thanks{The first author was supported by NSF Grant DMS-1602054}

\author{Stefan Wewers}
\address{Universit\"at Ulm}
\curraddr{Helmholzstra{\ss}e}
\email{stefan.wewers@uni-ulm.de}
\thanks{}




\begin{abstract}
A weak wild arithmetic quotient singularity arises from the quotient of  a smooth arithmetic surface by a finite group action, where the inertia group of a point on a closed characteristic $p$ fiber is a $p$-group acting with smallest possible ramification jump.  In this paper, we give complete explicit resolutions of these singularities using deformation theory and valuation theory, taking a more local perspective than previous work has taken.  Our descriptions answer several questions of Lorenzini.  Along the way, we give a valuation-theoretic criterion for a normal snc-model of $\proj^1$ over a discretely valued field to be regular. 
\end{abstract}

\maketitle

\section{Introduction}\label{Sintro}

A closed point $x$ on an integral normal scheme $\mc{X}$ is called a \emph{quotient singularity} if the local ring $A:=\OO_{\mc{X},x}$ can be written as $A=B^G$, where $B$ is a regular local ring and $G$ is a finite group of local automorphisms of $B$. The quotient singularity $x\in\mc{X}$ is called \emph{tame} if we can choose $B$ and $G$ such that the order of $G$ is prime to the residue characteristic of $B$. Otherwise, we call it a \emph{wild quotient singularity}. In this paper we will be exclusively concerned with the case where $\mc{X}$ has dimension $2$. 

We assume that there exists a {\rm desingularization} of $\mc{X}$, i.e.\ a birational and proper morphism $f:\tilde{\mc{X}}\to\mc{X}$ such that $\tilde{\mc{X}}$ is regular (this holds under very mild assumptions on $\mc{X}$, see \cite{Li:rs}). Then we may also assume that the desingularization $f:\tilde{\mc{X}}\to\mc{X}$ is minimal with the property that the exceptional divisor $E:=f^{-1}(x)$ is a reduced normal crossing divisor on $\tilde{\mc{X}}$. The \emph{resolution graph} of the singularity is the dual graph of $E$, enhanced by the self intersection numbers of the irreducible components. One of the main questions motivating this work is the problem of classifying the resolution graphs of wild quotient surface singularities. As this question remains wide open, a more modest goal is to systematically produce explicit examples of such singularities with interesting resolution graphs.  

There is an extensive literature on quotient singularities on complex surfaces. For instance, Brieskorn \cite{brieskorn68} has classified the resolution graphs of such singularities.\footnote{It is expected that this classification carries over to the more general case of tame quotient singularities as defined above, but we are not aware of any general result in this direction (for the case of tame \emph{cyclic} quotient singularities, see \cite{CES:cf} and \cite{Steck18}).} It turns out the resolution graphs are either chains or have a unique node of valency $3$. Moreover, quotient singularities on complex surfaces are always rational (see e.g.\ \cite{brieskorn68}, Satz 1.7).  

Much less is known in the wild case. Lorenzini has shown (\cite{Lo:wqs}, Theorem 2.8) that the resolution graph of a quotient singularity in dimension $2$ is always a tree, and that the irreducible components of the exceptional fiber are smooth of genus $0$. However, wild quotient singularities in dimension $2$ need not be rational (counterexamples were first given by Artin in \cite{artin75}). Schr{\"o}er and Ito have shown (see \cite{itoschroeer}, Corollary 2.2) that the resolution graph of a wild quotient singularity has at least one node. In a series of papers (\cite{Lo:wqs}, \cite{Lo:wm}, \cite{Lo:wq}) Lorenzini has studied certain wild quotient singularities which arise on integral models of curves over local fields. The present paper is motivated by this work and arose from an attempt to answer some of the questions posed therein.

In particular, we restrict in this paper to the case of \emph{weak wild} arithmetic quotient singularities, see Definition \ref{def:aqs}.  These are wild quotient singularities arising from integral models of curves over local fields where the group action on the special fiber is \emph{weakly ramified}, i.e., has smallest possible ramification jump.  These singularities appear, for example, on models of curves with potentially good ordinary reduction, as is studied in \cite{Lo:wm}.  One can think of these singularities as the ``mildest possible" wild quotient singularities, and they seem to be the most amenable to study.  For instance, 
Lorenzini showed in \cite{Lo:wq} that they are rational when they arise from products of curves in characteristic $p$.  We show, in fact, that every weak wild arithmetic quotient singularity is rational (Corollary \ref{Crationalsingularity}).

\subsection{Results and techniques}\label{Sresults}
In this paper, we give a complete, explicit description of the resolution of any weak wild arithmetic quotient singularity (Theorem \ref{Tresolution}, Corollary \ref{Cwwsingres}).  We show that the resolution graph is a tree with at most $e$ nodes when the singularity comes from a $(\ints/p)^e$-action on a smooth arithmetic surface.  Furthermore, we relate the multiplicities and self-intersection numbers of components of the special fiber of the resolution to the arithmetic complexity of a certain extension of local fields, along with a generator of that extension.  This answers (generalizations of) several questions of Lorenzini from \cite{Lo:wm}.  

For an example of our results, let $K$ be a complete discretely valued field with algebraically closed characteristic $p$ residue field.  If $X$ is a smooth projective curve defined over $K$ that has bad reduction, but has good reduction over a $\ints/p$-extension $L/K$, and if $\mc{X}$ is a smooth model of $X \times_K L$ over $\mc{O}_L$, then the action of $\Gal(L/K)$ on $\mc{X}$ gives rise to a model $\mc{X}'$ of $X$ with wild quotient singularities, which are all weak if $\mc{X}$ has ordinary special fiber (some might be weak even if $\mc{X}$ does not have ordinary special fiber).  Lorenzini conjectured that if $\mc{X}$ is ordinary, then the resolution graph of each singularity of $\mc{X}'$ contains exactly $sp - 1$ vertices between the vertex corresponding to the strict transform of the special fiber of $\mc{X}'$ and the unique node of the graph, where $s$ is the jump in the ramification filtration for $L/K$.  We prove this for weak wild $\ints/p$-arithmetic quotient singularities on $\mc{X}$ individually, regardless of whether or not the special fiber is ordinary (Corollary \ref{Cresolution} and Remark \ref{Rlorenzini}).  Our techniques differ significantly from what has been used before for these types of problems.  In particular, we rely less on global intersection theory, and more on local deformation theory and valuation theory.  These local techniques allow us to obtain information about weak wild arithmetic quotient singularities independent of the global curves where they appear.

Specifically, we first use a deformation-theoretic argument inspired by work of Bertin and M\'{e}zard (\cite{BM:df}) to show that every weak wild arithmetic quotient singularity over $K$ is formally isomorphic to a singularity arising from a normal model of $\proj^1_K$ with irreducible special fiber (Corollary \ref{CallfromP1}).  This has the immediate consequence mentioned above that all these singularities are rational.  

We then investigate normal models of $\proj^1_K$ using \emph{inductive valuations}, also known as \emph{Mac Lane valuations}.  These were introduced over 80 years ago in \cite{Ma:ca}, but as far as we know they were not used to attack problems involving arithmetic surfaces until the thesis \cite{Ru:mc} of R\"{u}th.  Mac Lane valuations on the rational function field $K(x)$ exactly correspond to normal $\mc{O}_K$-models of $\proj^1_K$ with irreducible special fiber, and general normal $\mc{O}_K$-models of $\proj^1_K$ correspond to finite collections of Mac Lane valuations.  Mac Lane valuations are extremely explicit, and we use them to give regularity conditions for normal models of $\proj^1_K$, which should be of independent interest. (This was first used, in one particular example, in \cite{fkw}.) On the other hand, if a weak wild arithmetic quotient singularity is realized on a normal model of $\proj^1_K$ with irreducible special fiber, we exhibit various properties necessarily satisfied by the corresponding Mac Lane valuation (Theorem \ref{Tclassification}).  Combining this all, we obtain our singularity resolutions.

\subsection{Outline}\label{Soutline}
In \S\ref{Sprelims}, we give some basic results on extensions of discrete valuation fields.  In \S\ref{aqs}, we give definitions and background on arithmetic quotient singularities, in particular the weak and wild ones that are the subject of this paper.  In \S\ref{Sdef}, we use deformation theory to prove that weak wild singularities can be realized inside models of $\proj^1$.  In \S\ref{Smaclane}, we introduce Mac Lane valuations and diskoids, which can be viewed as rigid-analytic analogs to disks when one is working over a non-algebraically closed field.  Diskoids give a useful geometric way of thinking about Mac Lane valuations.  In \S\ref{Sclassification}, we use properties of diskoids to classify weak wild arithmetic quotient singularities in terms of Mac Lane valuations.  Finally, in \S\ref{Sresolution}, we show how to resolve singularities coming from certain collections of Mac Lane valuations, and we exhibit the resolution of weak wild arithmetic quotient singularities as a consequence.  The appendix introduces the concept of an \emph{$N$-path}, which is related to continued fractions and is used in \S\ref{Sresolution} for describing the valuations corresponding to our resolutions of singularities.

\subsection{Notation}\label{Snotation}
Throughout the paper $k$ is an algebraically closed field of
characteristic $p$, and $K$ is a complete
discrete valuation field with residue field $k$.  For any finite
extension $L/K$, we write $\pi_L$ for a uniformizer of $L$, and we normalize the valuation $v_L$ on $L$ so that
$v_L(\pi_L) = 1$.  We write $\mc{O}_L$ for the valuation ring of
$L$.  Note that $L/K$ is totally ramified, a fact that we will use implicitly throughout the paper.  We mainly restrict our consideration to arithmetic surfaces over some $\mc{O}_L$.  This restriction is standard and is justified in \S\ref{Saqsdefs}.

\subsection{Acknowledgements}  We thank Xander Faber and Jim Stankewicz for useful conversations, and we thank the referee for insightful comments that have improved the exposition.

\section{Ramification of extensions of local fields}\label{Sprelims}

Recall from \cite[IV]{Se:lf} that if $L/K$ is a $G$-Galois extension, then for $i \geq -1$ we define the \emph{higher ramification groups}  $G_i := \{\sigma \in G \mid v_L(\sigma(\pi_L) - \pi_L) \geq i+1\}$.  Note that we do not use the so-called ``upper numbering filtration" in this paper.  Then $G = G_{-1} = G_0$ is the inertia group of $L/K$, and $G_1$ is the wild inertia group.  We say that $L/K$ is \emph{weakly ramified} if $G_i$ is trivial for all $i \geq 2$.  We say that $s$ is a \emph{jump} in the higher ramification filtration for $L/K$ if $G_s \supsetneq G_{s+1}$.  

The following proposition is a direct consequence of \cite[p.\ 67, Corollary 3]{Se:lf}.

\begin{prop}\label{Pelemabelian}
If 
$k[[z]]/k[[t]]$ is a weakly ramified $G$-extension with $G$ a
$p$-group, then $G$ is elementary abelian.
\end{prop}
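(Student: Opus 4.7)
The plan is to unpack the cited Corollary~3 from Serre and observe that the weakly ramified hypothesis forces the extension to live entirely in the single graded piece $G_1/G_2$, which Serre identifies as a subgroup of the additive group of the residue field. So almost nothing needs to be done beyond bookkeeping of the ramification filtration.

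First I would check that, under the hypotheses, we have $G = G_1$. Since $L/K$ is totally ramified (the residue field of $k[[t]]$ is $k$, which is algebraically closed, so there is no residue extension), the inertia group equals all of $G$, i.e.\ $G = G_0$. The quotient $G_0/G_1$ injects into $k^\times$ and hence has order prime to $p$; but $G$ is a $p$-group, so this quotient is trivial and $G = G_1$. The weakly ramified hypothesis then gives $G_2 = 1$, so the filtration collapses to $G = G_1 \supseteq G_2 = \{1\}$.

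Next I would invoke Serre's Corollary~3, which says that for $i \geq 1$ the map $\sigma \mapsto \sigma(\pi_L)/\pi_L - 1$ (or, in the additive form, $\sigma \mapsto (\sigma(\pi_L)-\pi_L)/\pi_L^{i+1}$) induces an injective homomorphism $G_i/G_{i+1} \hookrightarrow (k,+)$. Applying this with $i = 1$ to our situation gives an embedding $G = G_1/G_2 \hookrightarrow (k,+)$. Since $(k,+)$ is an abelian group of exponent $p$, so is $G$, and hence $G$ is elementary abelian.

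There is no serious obstacle; the only point to be careful about is verifying that Serre's hypotheses for Corollary~3 apply in the equicharacteristic setting $k[[z]]/k[[t]]$ (not just to extensions of mixed-characteristic local fields). This is standard, since the results in Chapter~IV of \cite{Se:lf} are formulated for complete discretely valued fields with perfect residue field, which covers the case at hand.
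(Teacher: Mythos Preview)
Your proof is correct and is precisely the argument the paper has in mind: the paper's proof consists solely of the citation to Serre's Corollary~3, and you have simply unpacked why that corollary immediately gives the result once one notes $G = G_1$ and $G_2 = 1$.
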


\begin{lem}\label{Lbetterrep}
If $L/K$ is a $G$-Galois extension 
and $L = K(\alpha)$ for some $\alpha \in L$, then there exists $\delta \in
K$ such that $\alpha = \alpha' + \delta$ with $|G| \nmid v_L(\alpha')$ and
either $\delta = 0$ or $v_L(\delta) < v_L(\alpha')$.
\end{lem}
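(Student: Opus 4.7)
The plan is to strip off $K$-rational terms from $\alpha$ one at a time until the valuation of what remains fails to be divisible by $n:=|G|$. The key observation that makes this work is that $L/K$ is totally ramified (as noted in the notation section), so $v_L$ restricted to $K^\times$ has image exactly $n\ints$. In particular, $n \mid v_L(\beta)$ for some $\beta \in L$ if and only if there is an element of $K$ with the same $v_L$-valuation as $\beta$. If $n \nmid v_L(\alpha)$ already, we simply take $\alpha' = \alpha$ and $\delta = 0$.

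Otherwise, I would run the following inductive construction. Set $\alpha_0 = \alpha$. At stage $N$, assuming $v_L(\alpha_N) = n m_N$ is a multiple of $n$, the element $\pi_K^{-m_N}\alpha_N$ lies in $\OO_L^\times$, and because the residue fields of $K$ and $L$ coincide (both equal $k$), there exists $c_N \in \OO_K^\times$ with $\overline{c_N} = \overline{\pi_K^{-m_N}\alpha_N}$ in $k$. Set $\delta_{N+1} = c_N \pi_K^{m_N} \in K$ and $\alpha_{N+1} = \alpha_N - \delta_{N+1}$. By construction $v_L(\alpha_{N+1}) > v_L(\alpha_N) = v_L(\delta_{N+1})$. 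The process halts at the first $N$ with $n \nmid v_L(\alpha_N)$; at that point I set $\alpha' = \alpha_N$ and $\delta = \sum_{i=1}^{N} \delta_i \in K$. Since the $v_L(\delta_i)$ are strictly increasing, $v_L(\delta) = v_L(\delta_1) = v_L(\alpha)$, while $v_L(\alpha') > v_L(\delta_N) \geq v_L(\delta_1) = v_L(\delta)$, giving the required inequality.

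The main thing to verify is that the iteration terminates. Suppose for contradiction that it does not. Then the $\delta_i \in K$ satisfy $v_L(\delta_i) \to \infty$, hence $v_K(\delta_i) \to \infty$, so the series $\sum_i \delta_i$ converges in the complete field $K$ to some $\beta \in K$. By construction $\alpha - \sum_{i=1}^N \delta_i = \alpha_N$ has $v_L(\alpha_N) \to \infty$, so $\alpha = \beta \in K$. This contradicts $L = K(\alpha)$ (assuming $G$ is nontrivial, which is the only case of interest). Thus the process halts after finitely many steps, producing the desired $\alpha'$ and $\delta$.

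The only subtlety is the termination argument, which relies crucially on completeness of $K$; everything else is a straightforward successive-approximation argument using the equality of residue fields in a totally ramified extension.
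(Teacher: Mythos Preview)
Your argument is correct. The paper, however, takes a shorter route: it writes $\alpha$ in terms of the $K$-basis $1,\pi_L,\ldots,\pi_L^{|G|-1}$ of $L$ as $\alpha=a_0+a_1\pi_L+\cdots+a_{|G|-1}\pi_L^{|G|-1}$ with $a_i\in K$, and then simply takes $\delta=0$ if $v_L(\alpha)\neq v_L(a_0)$ and $\delta=a_0$ otherwise. The point is that the nonzero terms $a_i\pi_L^i$ have valuations in distinct residue classes modulo $|G|$, so the minimum is attained uniquely; if it is attained at $i=0$ one subtracts $a_0$ and the remaining sum automatically has valuation $\not\equiv 0\pmod{|G|}$.

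Comparing the two: your successive-approximation argument is essentially reconstructing the coefficient $a_0$ digit by digit and relies on completeness of $K$ to guarantee termination, whereas the paper's basis decomposition achieves the same in a single step and does not invoke completeness at all (only total ramification). In the paper's standing hypotheses $K$ is complete anyway, so nothing is lost, but the basis argument is both quicker and slightly more robust.
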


\proof
Write $\alpha = a_0 + a_1 \pi_L + a_2 \pi_L^2 + \cdots +
a_{|G|-1}\pi_L^{|G|-1}$ with all $a_i \in K$.  If $v_L(\alpha) \neq
v_L(a_0)$, take $\delta = 0$.  Otherwise, take $\delta = a_0$.
\Endproof

\begin{lem}\label{Lvaluation}
If $L/K$ is a $G$-Galois extension with
$v_L(\sigma(\pi_L) - \pi_L) = s + 1$, then $v_L(\sigma(x) - x) \geq s +
v_L(x)$ for all $x \in L$.  If $\sigma$ has $p$-power order and $p \nmid v_L(x)$, then $v_L(\sigma(x) - x) = s + v_L(x)$.  
\end{lem}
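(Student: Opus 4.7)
The plan is to prove both statements by a direct computation with the uniformizer, reducing first to the case $v_L(x) \geq 0$ and then expanding via the binomial theorem.

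First I would set $\epsilon := \sigma(\pi_L) - \pi_L$, so $v_L(\epsilon) = s+1$ by hypothesis. Recall (from \cite[IV, Lemma 1]{Se:lf}) that the condition $v_L(\sigma(\pi_L) - \pi_L) \geq s+1$ is equivalent to $\sigma$ acting trivially on $\mathcal{O}_L/\pi_L^{s+1}$, so $v_L(\sigma(u) - u) \geq s+1$ for every $u \in \mathcal{O}_L$. Now, to prove the first inequality, assume first that $n := v_L(x) \geq 0$ and write $x = u\pi_L^n$ with $u \in \mathcal{O}_L^\times$. Set $\eta := \sigma(u) - u$, so $v_L(\eta) \geq s+1$. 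Expanding
\[
\sigma(x) - x = (u + \eta)(\pi_L + \epsilon)^n - u\pi_L^n = u\sum_{k=1}^n \binom{n}{k}\pi_L^{n-k}\epsilon^k + \eta(\pi_L+\epsilon)^n,
\]
the $k$-th binomial term has valuation $v_L\!\bigl(\binom{n}{k}\bigr) + (n-k) + k(s+1) \geq n + ks \geq n+s$, and the final term has valuation $\geq (s+1) + n > n + s$. This gives $v_L(\sigma(x) - x) \geq n + s$. For $v_L(x) < 0$, I would reduce using $\sigma(x^{-1}) - x^{-1} = (x - \sigma(x))/(x\sigma(x))$ and the fact that $v_L(x\sigma(x)) = 2v_L(x)$.

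For the equality statement, I would use that if $\sigma$ has $p$-power order, then $\sigma \in G_1$ (since $G_0/G_1$ has order prime to $p$), which forces $s \geq 1$. In the expansion above, the terms with $k \geq 2$ now have valuation $\geq n + 2s > n + s$, strictly, and the $\eta$-term is still $> n+s$. Meanwhile, the $k=1$ term equals $u \cdot n \cdot \pi_L^{n-1} \cdot \epsilon$ with valuation $v_L(n) + n + s$. The hypothesis $p \nmid n$ forces $v_L(n) = 0$ (regardless of the characteristic of $K$, since $n$ is a nonzero integer prime to $p$), so this term has valuation exactly $n + s$ and dominates, yielding the equality. The $v_L(x) < 0$ case follows again by the reciprocal trick, noting that $p \nmid v_L(x)$ iff $p \nmid v_L(x^{-1})$.

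The computations are all routine; the only thing to be careful about is the bookkeeping in the binomial expansion and the reduction to $v_L(x) \geq 0$. The one conceptual point worth flagging is why $p$-power order is exactly the right hypothesis to guarantee $s \geq 1$, which is essential for the $k \geq 2$ terms to be strictly subdominant.
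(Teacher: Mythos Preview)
Your proof is correct and follows essentially the same approach as the paper's: both write $x = u\pi_L^{n}$ with $u$ a unit and split $\sigma(x) - x$ into a ``unit part'' and a ``uniformizer-power part''. The differences are cosmetic: the paper bounds $v_L(\sigma(u)-u)$ by writing $u = a + b$ with $a \in K$ and $v_L(b)>0$ (using that $\mc{O}_K$ and $\mc{O}_L$ share residue field), whereas you invoke \cite[IV, Lemma 1]{Se:lf} directly; and you are more explicit than the paper in spelling out the binomial expansion, the reduction from negative to non-negative valuation via $x \mapsto x^{-1}$, and the reason $\sigma$ of $p$-power order forces $s\geq 1$.
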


\proof
Let $v(x) = \nu$, and write $x = u\pi_L^{\nu}$ with $u \in
\mc{O}_L^{\times}$.  Then $\sigma(x) - x = (\sigma(u) - u)\sigma(\pi_L)^{\nu}
+ u(\sigma(\pi_L)^{\nu} - \pi_L^{\nu})$.  Since the residue field $k$
is algebraically closed, we can write $u = a + b$, with $a \in K$ and
$v_L(b) > 0$.  Thus 
$$v_L((\sigma(u) - u)\sigma(\pi_L)^{\nu}) = v_L((\sigma(b) -
b)\sigma(\pi_L)^{\nu}) \geq s + 1 + \nu.$$ 
On the other hand,  $v_L(u(\sigma(\pi_L)^{\nu} - \pi_L^{\nu})) \geq s + \nu$, with equality if $\sigma$ has $p$-power order and $p \nmid \nu$.  The same therefore holds for $\sigma(x) - x$, which proves the lemma.
\Endproof

\begin{prop}\label{Pramprop}
Suppose $L/K$ is a $G$-extension with $G$ an elementary abelian
$p$-group, and that $\alpha \in \mc{O}_L$ is such that $K(\alpha) = L$ and $v_L(\sigma(\alpha) - \alpha)$ is
independent of the choice of nontrivial $\sigma \in G$.  Let $s$ be maximal such
that the higher ramification group $G_s$ is nontrivial.
\begin{enumerate}[(i)]
\item If $p \nmid v_L(\alpha)$, then $s$ is the unique higher
  ramification jump for $L/K$, and $v_L(\sigma(\alpha) - \alpha) = v_L(\alpha) +
  s$.
\item If $p \mid v_L(\alpha)$, then $v_L(\sigma(\alpha) - \alpha) >
  s + v(\alpha)$.
\end{enumerate}
\end{prop}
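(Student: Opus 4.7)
The plan is to derive both parts from Lemma \ref{Lvaluation} applied with $x = \alpha$, once we understand what the constancy hypothesis forces on the invariant $s_\sigma$, defined so that $v_L(\sigma(\pi_L) - \pi_L) = s_\sigma + 1$ (equivalently, $\sigma \in G_{s_\sigma} \setminus G_{s_\sigma + 1}$). Since $G$ is elementary abelian, every nontrivial element has order $p$, so the $p$-power-order hypothesis in Lemma \ref{Lvaluation} is automatic.

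For (i), Lemma \ref{Lvaluation} gives the exact equality $v_L(\sigma(\alpha) - \alpha) = s_\sigma + v_L(\alpha)$ for every nontrivial $\sigma \in G$, using $p \nmid v_L(\alpha)$. The hypothesis that the left side is independent of $\sigma$ therefore forces $s_\sigma$ to be constant in $\sigma$; since the maximal value $s$ is attained by definition, we must have $s_\sigma = s$ for every nontrivial $\sigma$. This means $G_s = G$ while $G_{s+1} = \{1\}$, so $s$ is the unique jump and $v_L(\sigma(\alpha) - \alpha) = s + v_L(\alpha)$, as claimed.

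For (ii) the key point, and the main (minor) obstacle, is to upgrade Lemma \ref{Lvaluation} to a \emph{strict} inequality when $p \mid v_L(\alpha)$. Inspecting its proof, one writes $\sigma(\pi_L)^\nu - \pi_L^\nu = \pi_L^\nu\bigl((1+\eta_\sigma)^\nu - 1\bigr)$ with $\nu := v_L(\alpha)$ and $v_L(\eta_\sigma) = s_\sigma$. When $p \mid \nu$, the leading binomial term $\nu\eta_\sigma$ has valuation $v_L(\nu) + s_\sigma > s_\sigma$ (in residue characteristic $p$ we have $v_L(\nu) > 0$, and in equal characteristic $p$ the integer $\nu$ vanishes in $L$ altogether), while each further term $\binom{\nu}{k}\eta_\sigma^k$ with $k \geq 2$ has valuation $\geq 2s_\sigma > s_\sigma$ because $s_\sigma \geq 1$ for any nontrivial element of a $p$-group. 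Combined with the bound $v_L\bigl((\sigma(u) - u)\sigma(\pi_L)^\nu\bigr) \geq s_\sigma + 1 + \nu$ from the proof of Lemma \ref{Lvaluation}, this yields the strict inequality $v_L(\sigma(\alpha) - \alpha) > s_\sigma + v_L(\alpha)$ whenever $p \mid v_L(\alpha)$. Now choose any $\sigma \in G_s \setminus \{1\}$, which exists because $G_s$ is nontrivial: this gives $v_L(\sigma(\alpha) - \alpha) > s + v_L(\alpha)$, and the constancy hypothesis propagates the inequality to all nontrivial elements of $G$.
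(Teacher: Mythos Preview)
Your argument for part (i) is correct and essentially identical to the paper's.

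For part (ii) your argument is also correct, but it takes a different route from the paper. The paper does not reopen the proof of Lemma~\ref{Lvaluation}; instead it picks a nontrivial $\tau\in G_s$, sets $M=L^{\langle\tau\rangle}$, and applies Lemma~\ref{Lbetterrep} to the degree-$p$ extension $L/M$ to write $\alpha=\alpha'+\delta$ with $\delta\in M$ and $p\nmid v_L(\alpha')$. Since $p\mid v_L(\alpha)$, one has $v_L(\alpha)<v_L(\alpha')$, and the equality case of Lemma~\ref{Lvaluation} (applied to $\tau$ and $\alpha'$) gives $v_L(\tau(\alpha)-\alpha)=v_L(\tau(\alpha')-\alpha')=s+v_L(\alpha')>s+v_L(\alpha)$; constancy then propagates this to all $\sigma$. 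Your approach instead sharpens Lemma~\ref{Lvaluation} directly: writing $\sigma(\pi_L)=\pi_L(1+\eta_\sigma)$ and expanding $(1+\eta_\sigma)^\nu-1$ binomially, you use $p\mid\nu$ to kill (or push up) the $k=1$ term and $s_\sigma\ge 1$ to handle $k\ge 2$. This is a perfectly valid and slightly more self-contained computation; the paper's version has the advantage of staying at the level of the stated lemmas (Lemma~\ref{Lbetterrep} and the equality clause of Lemma~\ref{Lvaluation}) without revisiting any proof.
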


\proof
If $p \nmid v_L(\alpha)$, then by Lemma \ref{Lvaluation},
$v_L(\sigma(\pi_L) - \pi_L) = v_L(\sigma(\alpha) - \alpha)) -
v_L(\alpha) + 1$.  Thus $v_L(\sigma(\pi_L) - \pi_L)$ does not depend on the choice of nontrivial $\sigma$, so there is only one ramification jump (namely
$s$).  Since $v_L(\sigma(\pi_L) - \pi_L) = s + 1$, part (i) follows.

To prove part (ii), let $\tau \in G_s$ be a nontrivial element, and
let $M = L^{\langle \tau \rangle}$.  Apply Lemma \ref{Lbetterrep} to $L/M$ to write $\alpha = \alpha' + \delta$ with $\delta \in M$ and $p \nmid v_L(\alpha')$.  Note that $v(\alpha) <
v(\alpha')$.  
Now, $$v_L(\tau(\alpha) - \alpha) = v_L(\tau(\alpha') - \alpha') = s +
v_L(\alpha') > s + v_L(\alpha).$$  By assumption, the same is true
after replacing $\tau$ by any nontrivial $\sigma \in G$. 
\Endproof

\begin{prop}\label{Pnotdivisbyp}
If $L/K$ is a $G$-extension with $G$ a $p$-group and char$(K) = p$, then no ramification jump of $L/K$ is divisible by $p$. 
\end{prop}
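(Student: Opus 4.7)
The plan is to fix a ramification jump $s$, choose $\sigma \in G_s \setminus G_{s+1}$ (so that $v_L(\sigma(\pi_L) - \pi_L) = s+1$), and iterate the operator $\sigma - 1$ on $\pi_L$. Setting $x_j := (\sigma - 1)^j(\pi_L)$, I want to track $v_L(x_j)$ using Lemma \ref{Lvaluation}, exploiting the fact that in characteristic $p$ the binomial expansion of $(\sigma - 1)^{p^r}$ collapses dramatically.

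First I would note that since $G$ is a $p$-group, $\sigma$ has some order $p^r$. Working inside the endomorphism ring of $L$ viewed as an additive group — which has characteristic $p$ because $L$ does — and using that $\sigma$ commutes with the identity, one obtains $(\sigma - 1)^{p^r} = \sigma^{p^r} - 1 = 0$. In particular $x_{p^r} = 0$, so $v_L(x_{p^r}) = \infty$.

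Next I would apply Lemma \ref{Lvaluation} inductively. Since $\sigma$ has $p$-power order, whenever $p \nmid v_L(x_j)$ the lemma gives $v_L(x_{j+1}) = v_L(\sigma(x_j) - x_j) = s + v_L(x_j)$. Starting from $v_L(x_0) = 1$, which is coprime to $p$, induction yields $v_L(x_j) = js + 1$ for as long as each successive value stays coprime to $p$. Let $j^*$ be the largest index up to which this holds. Because $v_L(x_{p^r}) = \infty$ while the formula would predict the finite value $p^r s + 1$, the induction must break down at some step, so $j^* \leq p^r - 1$ and $p \mid v_L(x_{j^*}) = j^* s + 1$.

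Finally, since $v_L(x_0) = 1$ is not divisible by $p$, we have $j^* \geq 1$, and the congruence $j^* s \equiv -1 \pmod{p}$ forces $\gcd(s, p) = 1$, yielding $p \nmid s$. I do not expect any serious obstacle: the key insight — that the Frobenius collapse $(\sigma - 1)^{p^r} = \sigma^{p^r} - 1$ in characteristic $p$ forces the additive chain $x_0, x_1, \ldots$ to terminate earlier than the arithmetic progression $js + 1$ predicts — interfaces cleanly with Lemma \ref{Lvaluation}, and the entire argument stays within the framework already established in this section.
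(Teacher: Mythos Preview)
Your argument is correct and genuinely different from the paper's proof. The paper proceeds by citing \cite[IV, Proposition 11]{Se:lf} to reduce to the first jump, then uses compatibility of the lower-numbering filtration with subgroups to reduce to the case $G \cong \ZZ/p$, where the result is quoted as well known (an exercise in Serre). Your proof, by contrast, is self-contained within the framework of the section: it uses only Lemma \ref{Lvaluation} and the characteristic-$p$ identity $(\sigma-1)^{p^r}=\sigma^{p^r}-1$. The iteration $x_j=(\sigma-1)^j\pi_L$ with the exact valuation increment $v_L(x_{j+1})=v_L(x_j)+s$ whenever $p\nmid v_L(x_j)$ is a nice device; the contradiction between the finite predicted value $p^r s+1$ and $x_{p^r}=0$ is exactly what pins down $p\nmid s$. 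One minor presentational point: the phrase ``the largest index up to which this holds'' could be sharpened---you really mean the largest $j^*$ with $v_L(x_j)=js+1$ for all $j\le j^*$, and the conclusion $p\mid j^*s+1$ comes from observing that otherwise Lemma \ref{Lvaluation} would push the formula one step further. Your route makes the role of ${\rm char}(K)=p$ completely transparent (it is precisely what collapses $(\sigma-1)^{p^r}$), whereas the paper's route is shorter but opaque, outsourcing the content to Serre.
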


\proof
By \cite[IV, Proposition 11]{Se:lf}, it suffices to show that the first jump is not divisible by $p$.  Since the (lower numbering) filtration is compatible with taking subgroups, we may assume $G \cong \ints/p$, in which case the result is well-known (see, e.g., \cite[p.\ 72, Ex. 5]{Se:lf}).
\Endproof

\section{Arithmetic quotient singularities} \label{aqs}

In this section we state and discuss the key definitions used in this paper. In particular, we define the notion of \emph{arithmetic quotient singularity} on an arithmetic surface.

\subsection{}

For the convenience of the reader we start by recalling some facts on minimal regular resolution of arithmetic surfaces. Let $S$ be an excellent connected Dedekind scheme. By an \emph{arithmetic surface} over $S$ we mean a normal $S$-curve $\mc{X}\to S$ (so $\mc{X}\to S$ is of finite type and flat of relative dimension $1$). 

By \cite[Theorem 2.2.2]{CES:cf} there exists a proper birational morphism $\pi:\mc{X}^{\rm reg}\to\mc{X}$ such that $\mc{X}^{\rm reg}$ is a regular $S$-curve, and the fibers of $\pi$ do not contain any $-1$-curves (see \cite[Definition 2.2.1]{CES:cf}). Such an $S$-scheme is unique up to unique isomorphism, and every proper birational morphism $\mc{X}'\to\mc{X}$ with a regular $S$-curve $\mc{X}'$ admits a unique factorization through $\pi$. We call $\pi:\mc{X}^{\rm reg}\to\mc{X}$ the \emph{minimal regular resolution} of $\mc{X}$. 

We remark that if $\mc{X}$ is proper over $S$ and has smooth generic fiber $X$ of genus $\geq 1$, $\mc{X}^{\rm reg}$ is the well-known \emph{minimal regular $S$-model} of $X$ (see e.g.\ \cite{Li:ag}, \S 9.3). 

We will mainly use the following variant of the minimal regular resolution. Let $x\in\mc{X}$ be a closed point on an arithmetic surface over $S$. Let $U\subset\mc{X}$ be an open neighborhood of $x$ which does not contain any nonregular points except $x$. We define the \emph{minimal regular resolution of $\mc{X}$ in $x$} to be the morphism $\pi_x:\mc{X}_x\to\mc{X}$ obtained by gluing $\mc{X}-\{x\}$ to the part of $\mc{X}^{\rm reg}$ lying over $U$ (cf. \cite[Definition 2.2.3]{CES:cf}). (Clearly, all points on $\mc{X}_x$ above $x$ are regular, but there may be nonregular points on $\mc{X}_x$ as well.) By \cite[Corollary 2.2.4]{CES:cf}, $\mc{X}_x$ enjoys uniqueness and minimality properties analogous to $\mc{X}^{\rm reg}$. 

\subsection{}

Let us fix an arithmetic surface $\mc{X}\to S$ and a singular point $x\in\mc{X}$. Let $s\in S$ denote the image of $x$. Let $\pi:\mc{X}'\to\mc{X}$ denote the minimal regular resolution in $x$. The fiber $E:=\pi^{-1}(x)$ is called the \emph{exceptional fiber} of the resolution. We endow $E$ with its reduced induced closed subscheme structure. Since $\mc{X}'$ is regular in any point of $E$, $E$ is an effective Cartier divisor on $\mc{X}'$, which we may write as a sum 
\[
    E = \sum_{i=1}^n  E_i,
\]
where the $E_i$ are the irreducible components of $E$. Each $E_i$ is a connected curve, proper over $k(s)$.

Given an arbitrary divisor $D$ on $\mc{X}'$, we can define an \emph{intersection number}
\[
     D\cdot E_i := \deg_{E_i}(\OO_{E_i}(D))\in\ZZ,
\]     
see \cite{Li:rs}, \S 13. 
In particular, we obtain an intersection pairing on divisors on $\mc{X}$ with support on $E$, which is codified by the \emph{intersection matrix}
\[
   M_x := \big(\, E_i\cdot E_j \,\big).
\]   
It is well known that $M_x$ is symmetric and negative definite (\cite[Lemma 14.1]{Li:rs}). It follows that $E_i\cdot E_i <0$. By definition, $E_i\cdot E_j\geq 0$ for $i\neq j$, and $E_i\cdot E_j=0$ if and only if $E_i\cap E_j=\emptyset$. 

Let us denote by $\mc{X}_s:=\mc{X}\times_S \Spec k(s)$ (resp.\ $\mc{X}_s':=\mc{X}'\times_S\Spec k(s)$) the fiber of $\mc{X}$ (resp.\ $\mc{X}'$) over $s\in S$. Then $\mc{X}'_s$ is a Cartier divisor on $\mc{X}'$, and may be written as a formal sum
\[
    \mc{X}'_s = \sum_Z m_Z\cdot Z, 
\]   
where $Z$ are the irreducible components and $m_z\in\ZZ$. We call $m_Z$ the \emph{multiplicity} of the component $Z$. We write $m_i:=m_{E_i}$. 

By \cite[Prop.\ 10.4 (i)]{Li:rs},
\[
   0 = \mc{X}_s'\cdot E_i = \sum_Z m_Z Z\cdot E_i,
\]
which implies
\begin{equation} \label{eq:self_intersection}
   E_i\cdot E_i = -\frac{1}{m_i} \cdot \sum_{Z\neq E_i} m_Z\cdot Z\cdot E_i.
\end{equation}

\begin{defn} \label{def:intersection_graph}
  The \emph{extended intersection graph} of the desingularization of $x\in\mc{X}$ is the weighted undirected graph $G_x'$ defined as follows. The vertices of $G_x'$ are the irreducible components $Z$ of $\mc{X}_s'$ with $Z\cap E\neq \emptyset$. Two components $Y,Z$ are connected by an edge if $Y\neq Z$ and $Y\cap Z\neq\emptyset$, and the weight of this edge is the intersection number $Y\cdot Z$. The \emph{intersection graph} of the singularity $x\in\mc{X}$ is the (weighted) subgraph $G_x$ of $G_x'$ whose vertices are the irreducible components of $E$. 
\end{defn}

\begin{rem} \label{rem:intersection_graph}
\begin{enumerate}[(i)]
\item
  The graphs $G_x$ and $G_x'$ are connected because $E$ is connected.
\item
  The graph $G_x$ does not depend on the morphism $\mc{X}\to S$ (but $G_x'$ does). 
\item
  Formula \eqref{eq:self_intersection} shows that the full intersection matrix $M_x$ is determined by the extended intersection graph $G_x'$ and the multiplicities $m_Z$. 
\item
  Assume that $\mc{X}_s'$ is a reduced normal crossing divisor. Then $Z\cdot E_i = 1$ for $Z\neq E_i$ and $Z\cap E_i \neq\emptyset$. This means that all edges of $G_x'$ have weight $1$. Formula \eqref{eq:self_intersection} simplifies to 
  \begin{equation} \label{eq:self_intersection2}
      E_i\cdot E_i = -\frac{1}{m_i} \sum_{\substack{Z\neq E_i \\ Z\cap E_i\neq\emptyset}} m_Z.
   \end{equation}
\end{enumerate}   
\end{rem}

\subsection{} 

We continue with the previous notation. 
The following proposition states that the structure of the exceptional fiber of the minimal regular resolution in $x$ depends only on the formal neighborhood of $x$ in $\mc{X}$. 

\begin{prop} \label{prop:lg}
  Let $\mc{X}_1,\mc{X}_2$ be two arithmetic surfaces and $x_i\in\mc{X}_i$, $i=1,2$ 
  be closed points. Assume that the complete local rings $\hat{\OO}_{\mc{X}_i,x_i}$ are isomorphic as $\OO_S$-algebras. Then the exceptional fibers of the minimal regular resolutions in $x_1$ and $x_2$ are isomorphic (including their intersection product). 
\end{prop}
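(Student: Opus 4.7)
The plan is to show that both the exceptional fiber and its intersection pairing are determined by the formal neighborhood $\Spec \hat{\OO}_{\mc{X}_i,x_i}$, so that the given isomorphism of complete local rings transports all the data. Let $\hat{\mc{X}}_i := \Spec \hat{\OO}_{\mc{X}_i,x_i}$ with closed point $\hat{x}_i$, let $\pi_i : \mc{X}_i' \to \mc{X}_i$ denote the minimal regular resolution in $x_i$, and form the base change $\hat{\pi}_i : \hat{\mc{X}}_i' := \mc{X}_i' \times_{\mc{X}_i} \hat{\mc{X}}_i \to \hat{\mc{X}}_i$. My first step is to verify that $\hat{\pi}_i$ is the minimal regular resolution of $\hat{\mc{X}}_i$ in $\hat{x}_i$.

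For this, I would use that $\mc{X}_i$ is excellent (since $S$ is excellent and $\mc{X}_i \to S$ is of finite type), so that the canonical morphism $\hat{\mc{X}}_i \to \mc{X}_i$ is faithfully flat with geometrically regular fibers. Regularity of the ambient surface, regularity of a closed point, and the condition of being a $-1$-curve (a smooth $\proj^1$ over the residue field with self-intersection $-1$) are all preserved and reflected under such a morphism, and intersection numbers of complete curves in fibers are invariant under flat base change. It follows that $\hat{\mc{X}}_i'$ is regular, $\hat{\pi}_i$ is proper and an isomorphism away from $\hat{x}_i$, and $\hat{\pi}_i^{-1}(\hat{x}_i)$ contains no $-1$-curve. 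Moreover, since $\hat{x}_i \mapsto x_i$ and the two points have the same residue field, the exceptional fiber $\hat{\pi}_i^{-1}(\hat{x}_i)$ is canonically identified, together with its intersection product, with $E_i := \pi_i^{-1}(x_i)$. By the uniqueness clause of \cite[Theorem 2.2.2, Corollary 2.2.4]{CES:cf}, $\hat{\pi}_i$ is indeed the minimal regular resolution of $\hat{\mc{X}}_i$ in $\hat{x}_i$.

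Once this compatibility is established, the isomorphism $\hat{\OO}_{\mc{X}_1,x_1} \iso \hat{\OO}_{\mc{X}_2,x_2}$ of $\OO_S$-algebras yields an $S$-isomorphism $\hat{\mc{X}}_1 \iso \hat{\mc{X}}_2$, which by uniqueness of the minimal regular resolution lifts to an isomorphism $\hat{\mc{X}}_1' \iso \hat{\mc{X}}_2'$, and therefore to an isomorphism of the exceptional fibers preserving all intersection numbers. Transporting through the identifications $E_i \cong \hat{\pi}_i^{-1}(\hat{x}_i)$ from the previous paragraph gives the conclusion.

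The main obstacle is precisely the compatibility step: carefully verifying that the formation of the minimal regular resolution in a closed point commutes with passage to the formal completion at that point. This relies essentially on the excellence of $\mc{X}_i$, which makes $\hat{\mc{X}}_i \to \mc{X}_i$ a regular morphism, and on the fact that both regularity and the $-1$-curve obstruction are preserved in both directions under such morphisms. Once these invariance statements are in hand, the uniqueness portions of the results of \cite{CES:cf} and \cite{Li:rs} give the identification automatically, and the rest of the argument is purely formal.
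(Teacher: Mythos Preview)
Your argument is correct and gives a clean proof, but it proceeds in the opposite direction from the paper. You start with the minimal regular resolution $\pi_i:\mc{X}_i'\to\mc{X}_i$, base change to $\hat{\mc{X}}_i=\Spec\hat{\OO}_{\mc{X}_i,x_i}$, and verify that the result is still the minimal regular resolution there, using that $\hat{\mc{X}}_i\to\mc{X}_i$ is a regular morphism (excellence) so that regularity and the $-1$-curve condition are preserved and reflected. The paper instead invokes Lipman's result (\cite[Remark C]{Lipman:desingularization}) that the minimal desingularization of $\Spec\hat{R}$ is the blowup of a primary ideal $\hat{I}\lhd\hat{R}$; contracting $\hat{I}$ to $I:=R\cap\hat{I}$ and blowing up $I$ in $\Spec R$ produces a map whose completion recovers $\hat{\mc{X}}'$, and one then checks this blowup is the minimal desingularization of $\mc{X}$. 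Your approach avoids the primary-ideal characterization and is more self-contained, at the cost of spelling out the permanence of regularity and of the $-1$-curve condition under the regular morphism; the paper's approach is shorter once Lipman's structural result is granted. One minor point: the uniqueness statement you need applies to $\Spec\hat{\OO}_{\mc{X}_i,x_i}$, which is not literally an arithmetic surface over $S$ in the sense of the paper, so the citation to \cite{CES:cf} should really be to the more general excellent-surface version in \cite{Li:rs} (which you also mention).
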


\proof
This follows from the argument in \cite[p.156]{Lipman:desingularization}, used in the proof of Remark D of the introduction of \emph{loc.cit.}. Let $x\in\mc{X}$ be a singular point on an excellent normal surface $\mc{X}$. For the proof we may assume that $\mc{X}=\Spec R$ is local. Let $\hat{R}:=\hat{\OO}_{\mc{X},x}$ be the complete local ring. Then there exists a minimal desingularization $\hat{f}:\hat{\mc{X}}'\to\Spec\hat{R}$. By Remark C of \emph{loc.cit.}, $\hat{f}:\hat{\mc{X}}'\to\Spec\hat{R}$ is the blowup of a primary ideal $\hat{I}\lhd\hat{R}$. Let $f:\mc{X}'\to\mc{X}$ be the blowup of $I:=R\cap\hat{I}$. Then $\hat{\mc{X}}'=\mc{X}'\otimes_R\hat{R}$. Now it follows easily that $f:\mc{X}'\to\mc{X}$ is the minimal desingularization of $x$. This proves the proposition. 
\Endproof

We also include the following proposition, the exact statement of which we could not find in the literature (but compare the paragraph before \cite[Lemma 2.1.1]{CES:cf}).
\begin{prop}\label{Prescommutes}
Let $\mc{X} \to S$ be an arithmetic surface and $x \in \mc{X}$ a closed point lying over a closed point $s \in S$.  Then the minimal regular resolution of $\mc{X}$ in $x$ commutes with base change to $\mc{O}_{S, s}$, to $\hat{\mc{O}}_{S, s}$, as well as to the strict henselization $\mc{O}_{S, s}^{\text{sh}}$.
\end{prop}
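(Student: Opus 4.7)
The plan is to use the universal characterization of the minimal regular resolution $\pi_x: \mc{X}_x\to\mc{X}$ in $x$ (it is the essentially unique proper birational morphism whose source is regular at all points above $x$ and whose fiber over $x$ contains no $-1$-curves) and to check that each of these properties is stable under the three base changes under consideration.

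The case of $\mc{O}_{S,s}$ is essentially by construction. Since $\pi_x$ is built by gluing $\mc{X}-\{x\}$ to an open neighborhood of $x$ in $\mc{X}^{\rm reg}$, its formation is Zariski-local on $S$ near $s$. Pulling back along the pro-open immersion $\Spec\mc{O}_{S,s}\to S$ yields exactly the analogous gluing, which is the minimal regular resolution of $\mc{X}\times_S \Spec\mc{O}_{S,s}$ in $x$.

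For the completion $\hat{\mc{O}}_{S,s}$, I would adapt the argument in the proof of Proposition \ref{prop:lg}. There, $\pi_x$ is exhibited locally as the blowup of a primary ideal $I\subset\OO_{\mc{X},x}$, and blowups commute with arbitrary base change, so the base change of $\pi_x$ is the blowup of $I\cdot\hat{\OO}_{\mc{X},x}$. To check this is the minimal regular resolution after base change, I would verify: properness and birationality are preserved under flat base change; regularity of the total space is preserved because excellence of $S$ makes $\mc{O}_{S,s}\to\hat{\mc{O}}_{S,s}$ a regular morphism, so regularity propagates along this flat base change; and the $-1$-curve condition is intrinsic to the exceptional fiber, which is a curve over $k(s)$ and is literally unchanged by the base change. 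The case of $\mc{O}_{S,s}^{\text{sh}}$ is parallel: the strict henselization is also a regular morphism (being ind-\'etale), and the exceptional fiber over the closed point undergoes only a separable residue field extension of $k(s)$, under which irreducible components, arithmetic genera, and intersection numbers are preserved, so the $-1$-curve condition is unaffected.

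The main obstacle will be making the regularity propagation completely watertight---excellence of $S$, and hence of the relevant local rings, is essential here---and ensuring that the numerical data cutting out the $-1$-curve condition behaves well across the residue field extension in the strict henselization case. For the latter, an alternative approach is to write $\mc{O}_{S,s}^{\text{sh}}$ as a filtered colimit of local-\'etale extensions of $\mc{O}_{S,s}$, handle each \'etale stage via faithfully flat descent of the universal property, and then pass to the limit.
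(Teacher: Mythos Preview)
Your handling of the localization and completion cases is essentially what the paper does: it cites \cite[Theorem 2.2.4]{CES:cf} and remarks that the completion case also follows from the argument of Proposition~\ref{prop:lg}, just as you propose. No issue there.

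For the strict henselization, however, your argument has a genuine soft spot exactly where you flag it. You write that under the separable residue field extension ``irreducible components, arithmetic genera, and intersection numbers are preserved.'' But an irreducible component $E$ of the exceptional fiber need not be geometrically irreducible over $k(s)$, so under base change to $A=\mc{O}_{S,s}^{\rm sh}$ it may split as $E_A=F_1+\cdots+F_r$. The question is then why no $F_j$ can be a $-1$-curve if $E$ is not, and conversely; your proposal does not address this, and the direct bookkeeping of genera and self-intersections under such a splitting is awkward.

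The paper sidesteps this by a different device. It first embeds $\mc{X}$ in a \emph{proper} arithmetic surface (legitimate by Proposition~\ref{prop:lg}), so that a relative canonical divisor $K$ is available. Since $\mc{X}'_A\to\mc{X}'$ is unramified, the canonical divisor $K_A$ upstairs is the pullback of $K$, and therefore $K\cdot E<0$ if and only if $K_A\cdot F<0$ for each component $F$ of $E_A$. Combining this with the negative definiteness of the intersection form (so $E\cdot E<0$ and $F\cdot F<0$ automatically) and Castelnuovo's criterion in the form \cite[Proposition 9.3.10(a)]{Li:ag} gives the equivalence of the $-1$-curve condition before and after base change. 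This canonical-divisor trick is the key idea you are missing; it cleanly absorbs the splitting issue without ever computing $F_j^2$ or $p_a(F_j)$ directly. Your alternative suggestion via a filtered colimit of local-\'etale extensions could also be made to work, but you would still need, at each finite \'etale stage, exactly this kind of argument to control the $-1$-curve condition under residue field extension.
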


\proof
The result for $\mc{O}_{S, s}$ and $\hat{\mc{O}}_{S, s}$ is \cite[Theorem 2.2.4]{CES:cf} (and it also follows easily from Proposition \ref{prop:lg}). To prove the result for $A := \mc{O}_{S, s}^{\text{sh}}$, we may assume $S$ is local.  By Proposition \ref{prop:lg}, embedding $\mc{X}$ into a proper arithmetic surface does not change the minimal resolution in $x$, so we may assume that $\mc{X} \to S$ is proper.  By \cite[Lemma 2.1.1]{CES:cf}, a birational morphism $\mc{X}' \to \mc{X}$ is a regular resolution of $\mc{X}$ in $x$ if and only if the same is true of its base change $\mc{X}'_A \to \mc{X}_A$.  It remains to show that if $\mc{X}' \to \mc{X}$ is a regular resolution of $\mc{X}$ in $x$, then an irreducible component $E$ of $\mc{X}'$ not contained in the strict transform of $\mc{X}$ is a $-1$-curve if and only if the same is true for all irreducible components $F$ of the base change $E_A$.  Let $K_A$ and $K$ be canonical divisors of $\mc{X}'_A$ and $\mc{X}'$, respectively.  Since $\mc{X}'_A \to \mc{X}'$ is unramified, $K_A$ is the pullback of $K$.  So $K \cdot E < 0$ if and only if $K_A \cdot F < 0$ for each $F$. Moreover, $E\cdot E<0$ and $F\cdot F<0$ since the intersection matrix is negativ definite. By \cite[Proposition 9.3.10(a)]{Li:ag}, we are done.
\Endproof

\subsection{}\label{Saqsdefs}
From now on we assume that $S=\Spec\OO_K$, where $K$ is a complete discrete valuation ring with algebraically closed residue field $k$ as in \S\ref{Snotation}.  By Proposition \ref{Prescommutes}, this assumption entails no great loss of generality.  In particular, this situation includes all information about resolution of singularities of arithmetic surfaces over rings of integers in global fields.

Let $\mc{X}\to S$ be an arithmetic surface. We let $X:=\mc{X}\otimes K$ denote the generic and $\Xb:=\mc{X}\otimes k$ the special fiber. Note that $X$ is a smooth $K$-curve. If $\mc{X}\to S$ is proper, then $X/K$ is projective (in fact, even $\mc{X} \to S$ is projective) and we call $\mc{X}$ a \emph{model} of $X$. 

Let $L/K$ be a Galois extension, with Galois group $G$. We let $\mc{Y}:=\tilde{\mc{X}}_L$ denote the normalization of $\mc{X}$ in the function field of $X_L:=X\otimes_K L$. Then $\mc{Y}$ is an arithmetic surface over $S$ (or over $\Spec\OO_L$). The map $\mc{Y}\to\mc{X}$ is finite, and $G$ acts on $\mc{Y}$ in such a way that $\mc{X}=\mc{Y}/G$.

\begin{defn} \label{def:aqs}
  Let $\mc{X}\to S$ be as above, and let $x\in\bar{X}$ be a closed point that is singular on $\mc{X}$. 
\begin{enumerate}[(a)]
\item
  We call $x$ an \emph{arithmetic quotient singularity} on $\mc{X}$ if there exists a finite Galois extension $L/K$ such that $\mc{Y}:=\tilde{\mc{X}}_L$ is smooth over $\OO_L$ at one (equivalently all) points above $x$.  We say that $L/K$ \emph{resolves} the singularity $x \in \mc{X}$.
\item
  We say that $x$ is a \emph{strict arithmetic quotient singularity} if there is a finite Galois extension $L/K$ giving rise to $\mc{Y}$ as in (a) such that for one (or for all) points $y\in\mc{Y}$ above $x$ the action of the stabilizer $G_y\subset G$ of $y$ on the special fiber $\bar{Y}$ of $\mc{Y}$ is faithful. We say that $L/K$ \emph{faithfully resolves} the singularity $x\in\mc{X}$. 
\item
  An arithmetic quotient singularity $x$ is called \emph{weak and wild} if it is strict, and if for one (equivalently all) points $y$ above $x$ the stabilizer $G_y$ is a $p$-group whose action on the complete local ring $\hat{\OO}_{\Yb,y}\cong k[[z]]$ is weakly ramified. In particular, $G_y$ is an elementary abelian $p$-group (see Proposition \ref{Pelemabelian}).
\end{enumerate}
\end{defn}  
  
\begin{rem}\label{Raqs}
\begin{enumerate}[(i)]
\item
  An arithmetic quotient singularity need not be strict.  See Remark \ref{Rstrict} for an example.
\item
  Let $L/K$ be a Galois extension which resolves the arithmetic quotient singularity $x\in\mc{X}$, and let $y\in\mc{Y}:=\tilde{\mc{X}}_L$ be a point above $x$. The stabilizer $G_y\subset G$ of $y$ acts naturally on the complete local ring $\hat{\OO}_{\bar{Y},y}\cong k[[z]]$. Let $I_y\subset G_y$ denote the kernel of $G_y\to{\rm Aut}(k[[z]])$. Then the quotient singularity $x\in\mc{X}$ is strict if and only if $I_y$ is a normal subgroup of $G$ and the quotient scheme $\mc{Y}/I_y$ is smooth over $\OO_L^{I_y}$ at the image of $y$. If this is the case, then the subextension $L':=L^{I_y}/K$ faithfully resolves $x$. 
\item  
  It follows from (ii) that if $x\in\mc{X}$ is a strict arithmetic quotient singularity then the Galois extension $L/K$ which faithfully resolves $x$ is unique. This is implicitly used in Part (c) of the definition. 
\item
  In general, $G_y\neq G$. However, the image of $y$ on the arithmetic surface $\mc{Y}/G_y$ over $\Spec\OO_{L^{G_y}}$ is then an arithmetic quotient singularity which is faithfully resolved by the $G_y$-extension $L/L^{G_y}$. Moreover, it is formally isomorphic to the original singularity. So in view of Proposition \ref{prop:lg} we may assume $G=G_y$ if we are only interested in the structure of the minimal resolution. 
\end{enumerate}
\end{rem}

\begin{defn}
  Let $G$ be a finite group. A \emph{strict arithmetic $G$-quotient singularity} is a strict arithmetic quotient singularity $x\in\mc{X}$ as above such that $G\cong\Gal(L/K)$ for the unique finite Galois extension $L/K$ which faithfully resolves $x$ (Remark \ref{Raqs}(iii)).
\end{defn}

\section{Deformation theory}\label{Sdef}

In this section we prove that every weak wild arithmetic quotient singularity is formally isomorphic to one such singularity on an integral model of the projective line $X=\proj^1_K$. 

\subsection{}

Throughout, we fix a finite Galois extension $L/K$ with Galois group $G$ which is an elementary abelian $p$-group. We choose prime elements $\pi_L$ of $\OO_L$
and $\pi_K$ of $\OO_K$. Note that $\OO_K$ and $\OO_L$ have the same residue field $k$ and hence that $G$ acts trivially on $k$.

Set $\hat{R}:=\OO_L[[T]]$ and let $\tilde{\A}:=\Aut_{\OO_K}(\hat{R})$ denote
the group of continuous $\OO_K$-linear automorphisms of $\hat{R}$. Similarly,
let $\A:=\Aut_{\OO_L}(\hat{R})$ denote the subgroup of $\OO_L$-linear
automorphisms. Then we have a short exact sequence
\begin{equation} \label{eq1}
  1 \to \A \to \tilde{\A} \to G \to 1.
\end{equation}
We are interested in sections $\rho:G\to\tilde{\A}$ of \eqref{eq1}, up to
conjugation by an element of $\A$. We write $\Scal$ for the set of
all sections, and $\bar{\Scal}:=\Scal/\A$ for the set of conjugacy classes. 

Let $\rho\in\Scal$ be a fixed section. Then $\rho$ induces a left
action of $G$ on $\A$ by conjugation:
\[
    {}^\sigma a := \rho(\sigma)\circ a\circ\rho(\sigma)^{-1}.
\] 
We write $\A^{\rho}$ for the group $\A$ considered
as a $G$-module, via this action. Then we have a natural bijection
    $\Scal \iso Z^1(G,\A^{\rho})$,
defined as follows. A section $\rho':G\to\tilde{\A}$
is mapped to the cocycle
\[
   \sigma\mapsto a_\sigma:= \rho'(\sigma)\circ\rho(\sigma)^{-1}.
\]
One easily checks that this map descends to a bijection
$\bar{\Scal} \iso H^1(G,\A^\rho)$.
Throughout, we use the notation from \cite[Chapter I.5]{Se:gc}.
Note that $H^1(G,\A^\rho)$ and $\bar{\Scal}$ are pointed sets but not groups. 

\subsection{}\label{Schooserho}

For any integer $n\geq 0$ we set $\hat{R}_n:=(\OO_L/\pi_K^{n+1})[[T]]$ and let
$\A_n$ denote the set of $\OO_L$-linear continuous automorphisms of
$\hat{R}_n$. Then we define $\tilde{\A}_n$ as the pushout of the extension
\eqref{eq1} along the surjective morphism $\A\to\A_n$:
\begin{equation} \label{eq2}
  \xymatrix{
    1 \ar[r] & \A \ar[r]\ar[d] & \tilde{\A} \ar[r]\ar[d] 
                                              & G \ar[r] \ar[d]^{=}  & 1 \\
    1 \ar[r] & \A_n \ar[r]     & \tilde{\A}_n \ar[r]     
                                              & G \ar[r]  & 1.  }
\end{equation}
An element of $\tilde{\A}_n$ is given by a pair $(a,\sigma)$, where
$a\in\Aut_{\OO_K}(\hat{R}_n)$, $\sigma\in G$ such that $a$ acts on $\mc{O}_L \subseteq \hat{R}_n$ via $\sigma$.

Let $\Scal_n$ denote the set of sections of the lower row
of \eqref{eq2}. We have natural maps $\Scal\to\Scal_n$ and $\Scal_{n+1}\to\Scal_n$
such that $\Scal = \varprojlim_n \Scal_n$.
These induce maps $\bar{\Scal}\to\bar{\Scal}_n$ and $\bar{\Scal}_{n+1}\to\bar{\Scal}_n$
such that $\bar{\Scal} = \varprojlim_n \bar{\Scal}_n$.
If we fix a section $\rho_n:G\to\tilde{\A}_n$ then we have 
bijections
\[
       \Scal_n \iso Z^1(G,\A_n^{\rho_n}), \qquad
          \bar{\Scal}_n \iso H^1(G,\A_n^{\rho_n}).
\]

From now on, we let $\rhob:G\to\tilde{\A}_0\in\Scal_0$ be a fixed section.  We
note that $\tilde{\A}_0 = \Aut_k(k[[T]])\times G$ and hence $\rhob$ is simply
a $k$-linear action of $G$ on $k[[T]]$. We let $\Scal^{\rhob}$ denote the set of
elements of $\Scal$ which lift $\rhob$. Similarly, we obtain subsets
$\bar{\Scal}^{\rhob}$, $\Scal_n^{\rhob}$ and $\bar{\Scal}_n^{\rhob}$. 

For a given $n\geq 1$ we consider the short exact sequence 
\begin{equation} \label{eq3}
   1 \to \Theta_n \to \A_n \to \A_{n-1} \to 1.
\end{equation}
It is easy to see that the kernel $\Theta_n$ is an abelian and normal (but
\emph{not} a central) subgroup of $\A_n$. Elements $a\in\Theta_n$ can be
identified with $k$-linear derivations $\theta:k[[T]]\to k[[T]]$ via
\[
   a=a_\theta:\;f \mapsto f + \pi_L^n\theta(\bar{f}).
\]
Here $f\in \hat{R}_n$ and $\bar{f}$ denotes the image of $f$ in $k[[T]]$. 

If $\rho_n:G\to\tilde{\A}_n$ is a section lifting $\rhob$ then we obtain a
short exact sequence of $G$-modules:
\begin{equation} \label{eq4}
   1 \to \Theta_n \to \A_n^{\rho_n} \to \A_{n-1}^{\rho_{n-1}} \to 1.
\end{equation}
Here we denote by $\rho_{n-1}$ the composition of $\rho_n$ with
$\tilde{\A}_n\to\tilde{\A}_{n-1}$. Also, since the $G$-module structure of
$\Theta_n$ depends only on $\rhob$ which is fixed throughout, $\Theta=\Theta_n$
is independent of $n$ via the identification with ${\rm Der}_k(k[[T]])$. Hence
we drop the indices $\rhob$ and $n$. The $G$-action on $\Theta$ is given by
\begin{equation} \label{eq5}
     {}^\sigma \theta := \rhob(\sigma)\circ\theta\circ\rhob(\sigma)^{-1}.
\end{equation}

By \cite[Chapter I.5.6]{Se:gc}, \eqref{eq4} induces a long exact sequence of pointed sets
\begin{equation} \label{eq6}
  1 \to \Theta^G \to (\A_n^{\rho_n})^G \to (\A_{n-1}^{\rho_{n-1}})^G 
    \to H^1(G,\Theta) \xrightarrow{\delta} H^1(G,\A_n^{\rho_n}) \to
                                H^1(G,\A_{n-1}^{\rho_{n-1}}).
\end{equation}
In particular, the abelian group $H^1(G,\Theta)$ acts, via $\delta$,
transitively on the set of equivalence classes of sections
$\rho_n':G\to\tilde{\A}_n$ which lift $\rho_{n-1}$. Note that this action may
not be faithful, because the image of the map $(\A_{n-1}^{\rho_{n-1}})^G \to H^1(G,\Theta)$
may be a nontrivial subgroup. 

\subsection{}

Let us now fix a section $\rho_{n-1}\in \Scal_{n-1}$. We will show that
the obstruction against lifting $\rho_{n-1}$ to a section $\rho_n\in\Scal_n$ is
represented by an element in $H^2(G,\Theta)$. For this we choose a set
theoretic lift $\rho_n:G\to\tilde{\A}_n$ of $\rho_{n-1}$ and define the map
$a:G^2\to\Theta$ by 
\[
    a_{\sigma,\tau} := \rho_n(\sigma)\rho_n(\tau)\rho_n(\sigma\tau)^{-1}.
\]
A tedious but straightforward computation shows that $a$ is a cocycle, i.e.\
that
\[
    {}^\sigma a_{\tau,\eta}\, a_{\sigma\tau,\eta}^{-1}\,a_{\sigma,\tau\eta}\,
    a_{\sigma,\tau}^{-1} = 1,
\]
for all $\sigma,\tau,\eta\in G$. Let $\Delta(\rho_{n-1})\in H^2(G,\Theta)$
denote the class of $a$. We claim that $\Delta(\rho_{n-1})$ does not depend on
the chosen lift $\rho_n$. Indeed, if $\rho_n'$ is any other lift, we set 
\[
     b_\sigma := \rho_n'(\sigma)\rho_n(\sigma)^{-1},
\]
and then 
\[\begin{split}
   a_{\sigma,\tau}' &:= \rho_n'(\sigma)\rho_n'(\tau)\rho_n'(\sigma\tau)^{-1} \\
     & = b_\sigma\,\rho_n(\sigma)\, b_\tau\, \rho_n(\tau)\,
          \rho_n(\sigma\tau)^{-1}\,b_{\sigma\tau}^{-1}  \\
     & = b_\sigma\,{}^\sigma b_\tau\,\rho_n(\sigma)\,\rho_n(\tau)
          \rho_n(\sigma\tau)^{-1}\,b_{\sigma\tau}^{-1}  \\
     & = \big( b_\sigma\,{}^\sigma b_\tau\,b_{\sigma\tau}^{-1}\big)
     a_{\sigma,\tau},
\end{split}\]
which shows that the cocycles $a$ and $a'$ differ by a coboundary. Now it
follows from the definition that the class $\Delta(\rho_{n-1})\in
H^2(G,\Theta)$ is trivial if and only if there exists a section
$\rho_n\in\Scal_n$ lifting $\rho_{n-1}$.

\subsection{}

Let $R := \hat{R}\cap L(T) \subseteq L((T))$
and let $\B$ denote the group of $\OO_L$-linear automorphisms of $R$. Then $\B$ is
a subgroup of $\A$. We may also consider
$\B$ as a subgroup of $\PGL_2(\OO_L)$, namely
\[
   \B = \{\begin{pmatrix} a & b \\ c & d \end{pmatrix} 
          \mid b \equiv 0\pmod{\pi_L} \}.
\]
Similarly, set $\tilde{\B} = \Aut_{\OO_K}(R)$. Then we have again a short
exact sequence
\begin{equation} \label{eq2.1}
  1 \to \B \to\tilde{\B} \to G \to 1,
\end{equation}
which is the pullback of the sequence \eqref{eq1} via the inclusion
$\B\hookrightarrow\A$. Also, for every $n\geq 0$ we have quotient groups
$\B\to\B_n$, $\tilde{\B}\to\tilde{\B}_n$ which are subgroups of $\A_n$ and
$\tilde{\A}_n$, respectively, which form short exact sequences 
\[
    1 \to \B_n \to\tilde{\B}_n \to G \to 1.
\]
For $n\geq 1$ we let $\LL_n$ denote the kernel of the morphism
$\B_n\to\B_{n-1}$. Elements of $\LL_n$ can be represented by matrices of the
form
\[
     1 + \pi_L^n \begin{pmatrix} a & b \\ c & d \end{pmatrix} ,
\]
with $a,b,c,d\in k$ arbitrary.  Thus we have an identification 
\[
   \LL_n\cong \LL := {\rm Lie}\,\PGL_2(k) = M_{2,2}(k)/<E_2>
\]
(which depends on the choice of $\pi_L$).
In particular, $\LL=\LL_n$ is a $k$-vector space of dimension $3$. The
inclusion $\B_n\hookrightarrow\A_n$ induces an inclusion
$\LL\hookrightarrow\Theta$, and this map identifies $\LL$ with the space of
global sections of the tangent sheaf on $\proj^1_k$. Thus,
\[
    \LL = < \frac{d}{dT}, T\frac{d}{dT}, T^2\frac{d}{dT}>.
\]

\subsection{} \label{defo5}

From now on, we assume that the section $\rhob: G \to \tilde{\A}_0 \in\Scal_0$ chosen in \S\ref{Schooserho} has image in $\B_0 =
\{\begin{pmatrix} a & b \\ c & d \end{pmatrix}
       \in\PGL_2(k) \mid b=0\} \subset \tilde{\A}_0.$
Then $\LL\subset\Theta$ is stable under the $G$-action induced by $\rhob$. 

\begin{thm} \label{thm:defo}
  Assume that the map
      $H^i(G,\LL) \to H^i(G,\Theta)$ is surjective for $i=1$ and 
  injective for $i=2$. Then every section $\rho:G\to\tilde{\A}$ lifting
  $\rhob$ is conjugate to a section $\rho':G\to\tilde{\B}\subset\tilde{\A}$
  lifting $\rhob$.
\end{thm}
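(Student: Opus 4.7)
The plan is to proceed by induction on $n$, constructing a compatible sequence of elements $a_n\in\A_n$ (with $a_n$ reducing to $a_{n-1}$ modulo $\pi_K^n$) such that $a_n\rho_n a_n^{-1}$ takes values in $\tilde{\B}_n$, where $\rho_n\in\Scal_n^{\rhob}$ denotes the truncation of $\rho$. The base case $n=0$ is immediate from the standing hypothesis that $\rhob$ has image in $\tilde{\B}_0$. Once the induction is complete, the element $a:=\varprojlim a_n\in\A=\varprojlim\A_n$ conjugates $\rho$ into $\tilde{\B}=\varprojlim\tilde{\B}_n$, proving the theorem.

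For the inductive step, lift $a_{n-1}$ arbitrarily to $\tilde{a}_n\in\A_n$ and replace $\rho_n$ by $\tilde{a}_n\rho_n\tilde{a}_n^{-1}$, so that $\rho_n$ now lifts a section $\rho_{n-1}$ taking values in $\tilde{\B}_{n-1}$. It then suffices to find $\theta\in\Theta\subset\A_n$ such that $\theta\rho_n\theta^{-1}$ takes values in $\tilde{\B}_n$, since $a_n:=\theta\tilde{a}_n$ will then complete the step. I would first produce \emph{some} section $\rho_n''\in\Scal_n$ lifting $\rho_{n-1}$ and taking values in $\tilde{\B}_n$, by applying the obstruction construction above to the compatible pair of short exact sequences
\[
1\to\LL\to\tilde{\B}_n\to\tilde{\B}_{n-1}\to 1 \quad\text{and}\quad 1\to\Theta\to\tilde{\A}_n\to\tilde{\A}_{n-1}\to 1,
\]
obtaining compatible obstruction classes in $H^2(G,\LL)$ and $H^2(G,\Theta)$. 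The second vanishes because $\rho_n$ itself lifts $\rho_{n-1}$ inside $\tilde{\A}_n$, so the injectivity hypothesis at $i=2$ forces the first to vanish as well. Now consider the cocycle $b_\sigma:=\rho_n(\sigma)\rho_n''(\sigma)^{-1}$, which takes values in $\Theta$ since both sections lift $\rho_{n-1}$. By the transitivity statement following \eqref{eq6}, two lifts of $\rho_{n-1}$ in $\Scal_n$ are $\A_n$-conjugate exactly when their difference cocycles agree in $H^1(G,\Theta)$. By the surjectivity hypothesis at $i=1$, the class of $b$ is represented by some $c\in Z^1(G,\LL)$; the twisted section $c\cdot\rho_n''$ then has image in $\tilde{\B}_n$ (because $\LL\subset\B_n$), lifts $\rho_{n-1}$, and its difference cocycle relative to $\rho_n''$ is exactly $c$, so it is $\A_n$-conjugate to $\rho_n$, producing the required $\theta$.

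The main technical obstacle will be bookkeeping with the non-abelian cohomology, in particular verifying that the obstruction construction transports cleanly from $\tilde{\A}_n$ to $\tilde{\B}_n$ under the inclusion, and that the $G$-module structures on $\LL$ and $\Theta$ (which a priori could depend on the chosen lift at each stage) really agree throughout. Both compatibilities follow from the observation already recorded in the text that the $G$-action on $\Theta$, and hence on its subspace $\LL$, depends only on the fixed section $\rhob$. Once these are confirmed, the two cohomological hypotheses plug in mechanically to deliver the required obstruction-vanishing and cocycle-lifting conclusions.
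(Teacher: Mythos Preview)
Your proposal is correct and follows essentially the same inductive strategy as the paper: use the $H^2$-injectivity to produce some $\tilde{\B}_n$-valued lift of $\rho_{n-1}$, then use the $H^1$-surjectivity to twist it by an $\LL$-valued cocycle so that it becomes $\Theta$-conjugate to $\rho_n$. One small imprecision: you write that the twisted section is ``$\A_n$-conjugate to $\rho_n$, producing the required $\theta$,'' but what you actually need (and what your argument gives) is that the difference cocycle is a $\Theta$-coboundary, hence the two sections are conjugate by an element of $\Theta$ specifically---this is the only way to ensure your new conjugator $a_n = \theta\tilde{a}_n$ reduces to $a_{n-1}$; the paper establishes this via an explicit computation rather than by invoking the exact sequence \eqref{eq6}, and your biconditional statement about $\A_n$-conjugacy is not literally true (the $H^1(G,\Theta)$-action on lifts need not be free).
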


\begin{proof}
We prove by induction that for every $n\geq 0$, a section
$\rho_n\in\Scal_n^{\rhob}$ is conjugate to a section $\rho_n':G\to\tilde{\B}_n$
lifting $\rhob$. For $n=0$ the claim is empty, so we may assume $n\geq 1$. We
may also assume, by induction, that the reduction $\rho_{n-1}$ of $\rho_n$ has
image in $\tilde{\B}_{n-1}$. Let $\rho_n':G\to\tilde{\B}_n$ be a set theoretic
lift of $\rho_{n-1}$. Then 
\[
    a_{\sigma,\tau} := \rho_n'(\sigma)\rho_n'(\tau)\rho_n'(\sigma\tau)^{-1}\in \LL
\]
defines a cocycle $a\in Z^2(G,\LL)$ whose class in $H^2(G,\Theta)$ vanishes,
because of the existence of the lift $\rho_n$. Using our assumption on $H^2$
we conclude that the class of $a$ in $H^2(G,\LL)$ is trivial. Hence we may
assume that $\rho_n':G\to\tilde{\B}$ is a group homomorphism. 

Set
\[
    a_\sigma':=\rho_n'(\sigma)\rho_n(\sigma)^{-1}.
\]
Then $a'\in Z^1(G,\Theta)$ is a coboundary. The assumption that $H^1(G,\LL)\to
H^1(G,\Theta)$ is surjective implies that there exists $b\in\Theta$ such that $a_\sigma'':=a_\sigma'\,{}^\sigma b\,b^{-1}$ 
defines a cocycle in $Z^1(G,\LL)$. But then
\[\begin{split}
  \rho_n''(\sigma) & :=(a_\sigma'')^{-1}\,\rho_n'(\sigma) \in \tilde{\B}_n \\
     & = b\,{}^\sigma b^{-1}\,(a_\sigma')^{-1}\,a_\sigma'\,\rho_n(\sigma) \\
     & = b\,{}^\sigma b^{-1}\,\rho_n(\sigma) = b\,\rho_n(\sigma)b^{-1}
\end{split}\]
defines a section $\rho_n'':G\to\tilde{\B}_n$ lifting $\rho_{n-1}$ and which
is conjugate to $\rho_n$. This completes the proof of the theorem.
\end{proof}

\subsection{}

Let $\hat{X}=\Spec A$ be a formal weak wild arithmetic quotient singularity over $\OO_K$ which is faithfully resolved by the extension $L/K$. Recall that this means the following:
\begin{enumerate}
\item
   $A$ is an integral, noetherian, flat, complete local $\OO_K$-algebra with residue field $k$.
\item
  The integral closure of $A$ in $A\otimes_K L$ is formally smooth over $\OO_L$. Hence $\tilde{A}_L\cong\hat{R}:=\OO_L[[T]]$.
\item
  The induced action of $G$ on $k[[T]]$ is faithful and weakly ramified (i.e.\ the extension $k[[T]]/K[[T]]^G$ is weakly ramified). 
\end{enumerate}

Let $\rho:G\to\hat{\A}$ denote the section corresponding to the action on $\hat{R}$ induced by the above identification. Let $\rhob:G\to\A_0$ denote the induced section. 

\begin{lem}\label{LinB0}
  After a a change of the coordinate $T$ we may assume that $\rhob$ has image in $\B_0$. 
\end{lem}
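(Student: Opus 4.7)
The goal is to produce $\psi \in \A_0 = \mathrm{Aut}_k(k[[T]])$ such that $\psi\,\rhob(\sigma)\,\psi^{-1} \in \B_0$ for every $\sigma \in G$. I first extract the data determining the target M\"{o}bius action. A finite $p$-group in characteristic $p$ has no nontrivial character into $k^\times$, so $G$ acts trivially on $T\cdot k[[T]]/T^2 k[[T]]$; thus $\rhob(\sigma)(T) = T + c_\sigma T^2 + O(T^3)$ with $c_\sigma \in k$. The weakly ramified hypothesis means $v_T(\sigma(T)-T) = 2$ for all nontrivial $\sigma$, so $c_\sigma \neq 0$, and expanding $(\sigma\tau)(T)$ shows that $\chi\colon \sigma \mapsto c_\sigma$ is an injective group homomorphism $G \hookrightarrow (k,+)$. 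The candidate target is then the M\"{o}bius action $\rho^{\mathrm M}\colon G \to \B_0$, $\rho^{\mathrm M}(\sigma)(T) := T/(1 - c_\sigma T)$; this is a homomorphism (because $\chi$ is), is weakly ramified, and realizes the same character $\chi$ as $\rhob$.

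The problem now is to conjugate $\rhob$ into $\rho^{\mathrm M}$ inside $\A_0$. Set $y := 1/T \in k((T))$. One computes $\sigma(y) = y - c_\sigma + \tilde h_\sigma$ with $\tilde h_\sigma \in T\cdot k[[T]]$, whereas the M\"{o}bius action satisfies $\rho^{\mathrm M}(\sigma)(y) = y - c_\sigma$ exactly. A coordinate change $T \mapsto 1/(y+g)$ with $g \in k[[T]]$ transports $\rhob$ into $\rho^{\mathrm M}$ precisely when $\sigma(g) - g = -\tilde h_\sigma$ for every $\sigma \in G$. The family $\{\tilde h_\sigma\}$ is a $1$-cocycle in $Z^1(G,\, T\cdot k[[T]])$ (a direct check using the cocycle identity on $y$), and the task reduces to showing it is a coboundary.

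The main obstacle lies here. Filter $T\cdot k[[T]]$ by $T^n\cdot k[[T]]$; each graded piece is isomorphic to $k$ with \emph{trivial} $G$-action (because $\sigma(T)\equiv T \pmod{T^2}$ forces $\sigma$ to act trivially on all associated graded). Constructing $g$ modulo $T^{n+1}$ from its reduction modulo $T^n$ has obstruction in $H^1(G,k) = \mathrm{Hom}(G,k)$, which is nonzero, so obstructions do not vanish automatically. One must therefore show that the specific obstruction cocycle at each order actually vanishes \emph{for our $\tilde h$}. I would handle this by invoking the rigidity of weakly ramified elementary abelian $p$-group actions on $k[[T]]$: two such actions sharing the same character $\chi\colon G\hookrightarrow k$ are conjugate in $\A_0$. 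This rigidity can be established either via Artin--Schreier theory applied to the intermediate $\ints/p$-subextensions of $k[[T]]^G \subseteq k[[T]]$ (each of which is determined up to a single M\"{o}bius normalization by an equation $y^p - y = c/U$ with $c \in k^\times$), or by an explicit order-by-order computation exploiting the group law $(\sigma\tau)(T) = \sigma(\tau(T))$ together with the constraint $v_T(\sigma(T)-T)=2$ for every nontrivial $\sigma$. Given this rigidity input, the existence of the desired $\psi$ follows, completing the plan.
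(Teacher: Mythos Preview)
Your approach is genuinely different from the paper's. The paper proceeds globally: by the Katz--Gabber--Harbater theorem, the weakly ramified $G$-action on $\Spec k[[T]]$ extends to a $G$-action on a smooth projective curve $\bar{X}/k$ with $\bar{X}/G \cong \proj^1$, totally ramified at one point and unramified elsewhere. A Riemann--Hurwitz computation (the ramification divisor has degree $2(|G|-1)$ precisely because the action is weakly ramified) forces $\bar{X} \cong \proj^1$ as well. Hence $G$ embeds in $\Aut_k(k(T)) \cong \PGL_2(k)$, and any $p$-subgroup of $\PGL_2(k)$ is conjugate into the lower-triangular Borel $\B_0$. This delivers the required change of coordinate in one stroke, with no cohomological bookkeeping.

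Your local/cohomological route is coherent up to the point you yourself flag as the ``main obstacle,'' but the resolution you offer is not a proof. The rigidity claim---that two weakly ramified $G$-actions on $k[[T]]$ sharing the same character $\chi\colon G\hookrightarrow k$ are conjugate in $\A_0$---is correct, but neither of your sketches establishes it. The Artin--Schreier idea is clean for $G=\ints/p$, but for $G=(\ints/p)^e$ with $e>1$ you must normalize $e$ independent Artin--Schreier generators \emph{simultaneously} by a single automorphism of $k[[T]]$, and you have given no mechanism for this. The ``order-by-order computation'' is entirely unspecified; the obstruction at each level lives in $H^1(G,k)=\mathrm{Hom}(G,k)\neq 0$, and you have not exhibited the identity that kills the particular class arising from $\tilde h$. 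In fact, the most transparent proof of your rigidity statement passes through Katz--Gabber and Riemann--Hurwitz---exactly the paper's argument. As written, your proposal defers the essential content of the lemma to an unproved assertion of equivalent difficulty.
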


\proof
By the theorem of Katz--Gabber--Harbater (e.g., \cite[Theorem 2.4]{Ha:mp}), the action $\rhob$ of $G$ on $\Spec k[[T]]$ extends to an action of $G$ on an algebraic curve $\Xb/k$ with full inertia group at one point and no inertia elsewhere, such that $\Xb/G \cong \proj^1$.  After a change of variables, we may assume $T \in k(\Xb)$.  Since the $G$-action is weakly ramified, \cite[IV, Proposition 4]{Se:lf} shows that the ramification divisor has degree $2(|G| - 1)$.  The Riemann-Hurwitz formula then shows that $\Xb \cong \proj^1$ as well.  Thus $G$ acts on $T$ via rational functions, and $\rhob \colon G \hookrightarrow \Aut_k(k(T)) \cong PGL_2(k)$.  Now we use that any subgroup of $PGL_2(k)$ isomorphic to $G$ is conjugate to a subgroup of $\mc{B}_0$, the group of lower triangular matrices.   
\Endproof

From now on, we assume that the conclusion of Lemma \ref{LinB0} holds. We are then in the situation of \S \ref{defo5}. In particular, we can consider $\LL$ as a sub-$G$-module of $\Theta$. 

\begin{prop}\label{PallfromP1}
If $\bar{\rho}: G \to \mc{B}_0$ is a section corresponding to a weakly
ramified action on $k[[T]]$ as above, then the map $H^i(G, \LL) \to H^i(G, \Theta)$ is
surjective for $i = 1$ and injective for $i = 2$.
\end{prop}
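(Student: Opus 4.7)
The plan is to construct an explicit $G$-equivariant splitting of $\Theta$ in which $\LL$ is one factor, and then to prove vanishing of the $H^1$ of the complement. Since $\rhob(G) \subset \mc{B}_0$ acts weakly at the fixed point $T = 0$, each $\sigma \in G$ acts on $T$ by $\sigma(T) = T/(1 + c_\sigma T)$ for some $c_\sigma \in k$. A direct computation gives $\sigma \cdot (T^n\, d/dT) = (1 + c_\sigma T)^{2 - n} T^n\, d/dT$, so the filtration $\Theta^{\geq n} := T^n k[[T]]\, d/dT$ is $G$-stable and $V := \Theta^{\geq 3}$ is a $G$-stable complement to $\LL$ in $\Theta$. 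The decomposition $\Theta = \LL \oplus V$ of $G$-modules makes $H^i(G, \LL) \to H^i(G, \Theta)$ automatically injective (settling the $i = 2$ claim) and reduces $i = 1$ surjectivity to showing $H^1(G, V) = 0$. Moreover, the $k$-linear map $V \to T k[[T]]$ sending $g\, d/dT \mapsto g/T^2$ is a $G$-equivariant isomorphism, since $\sigma(T^{-2}) = (1 + c_\sigma T)^2 T^{-2}$ exactly cancels the twist in the action on $V$, so I am reduced to showing $H^1(G, T k[[T]]) = 0$ for the natural $G$-action on $T k[[T]] \subset k[[T]]$.

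For the base case $G = \ints/p$: by Artin--Schreier theory applied to the weakly ramified extension $k[[T]]/k[[t]]$, the element $y := 1/T$ satisfies $y^p - y \in k[[t]]$, and a chosen generator $\sigma$ acts by $y \mapsto y + 1$. I would show that $y^{p-1}$ freely generates the fractional ideal $T^{-(p-1)} k[[T]]$ as a $k[[t]][G]$-module: expanding $(y + i)^{p-1} = \sum_{j = 0}^{p - 1} \binom{p-1}{j} i^{p-1-j} y^j$ in the $k[[t]]$-basis $\{y^j\}_{j=0}^{p-1}$ of $T^{-(p-1)} k[[T]]$, one gets a transition matrix whose reduction modulo $t$ factors as a diagonal matrix $\mathrm{diag}(\binom{p-1}{j})$ times a Vandermonde matrix on $\{0, 1, \dots, p-1\} \subset \ints/p$, both of which are invertible (using $\binom{p-1}{j} \equiv (-1)^j \pmod{p}$ together with the Vandermonde formula). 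Multiplication by the $G$-invariant uniformizer $t$ then gives a $G$-equivariant isomorphism $T^{-(p-1)} k[[T]] \iso T k[[T]]$, so $T k[[T]]$ is itself $k[[t]][G]$-free, hence cohomologically trivial.

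For $G$ of $p$-rank at least $2$, I would induct on $|G|$ via the Hochschild--Serre spectral sequence for a $\ints/p$-subgroup $H \subset G$. Weak ramification is inherited both by $H$ acting on $k[[T]]$ and by $G/H$ acting on $k[[T]]^H = k[[t_H]]$, so by the base case $H^q(H, T k[[T]]) = 0$ for $q \geq 1$, the spectral sequence collapses, and $H^1(G, T k[[T]]) \cong H^1(G/H, t_H k[[t_H]])$ vanishes by the induction hypothesis.

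The main obstacle will be the integral normal basis claim for $T^{-(p-1)}k[[T]]$ in the base case --- the Vandermonde--binomial determinant computation is the essential input. The splitting $\Theta = \LL \oplus V$, the twist-cancelling identification $V \cong T k[[T]]$, and the Hochschild--Serre induction are then formal consequences of the explicit action formula.
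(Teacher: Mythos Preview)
Your proof is correct and follows the same overall line as the paper, which simply cites \cite{CK:ed}: Lemma~3.4 there gives the $G$-module splitting $\Theta=\LL\oplus V$ (your $V=T^3k[[T]]\,d/dT$), settling injectivity, and the argument at the top of \S3.5 of that reference gives the $H^1$-isomorphism. What you have supplied is a self-contained replacement for that citation: the explicit action formula ${}^\sigma(T^n\,d/dT)=(1+c_\sigma T)^{2-n}T^n\,d/dT$ makes the splitting visible, the twist-cancelling identification $V\cong Tk[[T]]$ reduces the question to $H^1(G,Tk[[T]])=0$, and you then prove this by exhibiting an integral normal basis in the cyclic case (the Vandermonde/binomial determinant) and running Hochschild--Serre. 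One cosmetic point: in the base case your transition matrix already has entries in $k$, so the phrase ``reduction modulo~$t$'' is redundant, though not wrong. The inductive step also silently uses $(Tk[[T]])^H=t_Hk[[t_H]]$ and that $G/H$ again acts weakly ramified on $k[[t_H]]$ via a map of the same shape $t_H\mapsto t_H/(1+c\,t_H)$; both hold, but it is worth making the second explicit since it is what licenses reapplying the base-case hypothesis.
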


\proof
By \cite[Lemma 3.4]{CK:ed}, $\LL$ is a direct summand of $\Theta$ as a $G$-module (in that lemma, $M$ is our $\LL$ and $\mc{O}$ is our $\Theta$).  Thus $H^i(G, \LL) \to H^i(G, \Theta)$ is injective for all $i$, which proves the case $i = 2$.
Furthermore, the argument on the top of \cite[\S3.5]{CK:ed} shows that the inclusion $\LL \to \Theta$ induces an isomorphism on $H^1$.
\Endproof

We recall that the natural action of $\PGL_2(F)$ on $F(x)$ for any field $F$ is a \emph{right action}, i.e., $$x \newmatrix{a}{b}{c}{d} = \frac{ax+b}{cx+d}.$$

\begin{cor}\label{CallfromP1}
Every weak wild arithmetic $G$-quotient singularity over $K$ is formally isomorphic to one coming from a $G$-action on a smooth model $\mc{Y}$ of $\proj^1_L$ with free action on the special fiber except for one point fixed by all of $G$, where $G = \Gal(L/K)$ and the $G$-action on the generic fiber $\proj^1_L$ of $\mc{Y}$ is given purely by the $G$-action on $L$. 
\end{cor}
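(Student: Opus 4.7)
The plan is to interpret the singularity in the section-theoretic framework of \S\ref{Sdef}, invoke Theorem \ref{thm:defo}, and realize the output as a global $G$-equivariant model. Let $\hat{X}=\Spec A$ be the given formal weak wild arithmetic $G$-quotient singularity, faithfully resolved by $L/K$. By hypothesis there is an isomorphism $\tilde A_L \cong \hat R = \OO_L[[T]]$, which encodes the $G$-action as a section $\rho : G \to \tilde{\A}$ with $\hat R^G = A$. Its reduction $\rhob : G \to \tilde{\A}_0$ encodes the faithful weakly ramified $k$-linear action of $G$ on $k[[T]]$.

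First I would apply Lemma \ref{LinB0} to change the coordinate $T$ so that $\rhob$ has image in $\B_0$. Proposition \ref{PallfromP1} then verifies the cohomological hypothesis of Theorem \ref{thm:defo}, producing an element $a \in \A$ with $a\rho a^{-1}$ taking values in $\tilde{\B} \subset \tilde{\A}$. Since conjugation by $a \in \A$ is just an $\OO_L$-linear change of coordinate on $\hat R$, the invariant ring $\hat R^{a\rho a^{-1}(G)}$ is isomorphic to $A$, and so without loss of generality $\rho$ itself has image in $\tilde{\B}$.

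Next I would pass to a global model. The subring $R := \hat R \cap L(T)$ coincides with the local ring $\OO_L[T]_{(T,\pi_L)}$ of $\proj^1_{\OO_L}$ at the closed point $y=(T,\pi_L)$, and its maximal-ideal completion is $\hat R$. Since $\rho(G)\subseteq \tilde{\B}=\Aut_{\OO_K}(R)$, the action preserves $R$ and operates through matrices in $\PGL_2(\OO_L)$ together with the Galois action on $\OO_L$; any such datum extends canonically to an automorphism of the smooth $\OO_L$-scheme $\mc{Y}:=\proj^1_{\OO_L}$. The point $y$ is fixed by this $G$-action, and since taking invariants commutes with completion in this equivariant local situation, the completion of $R^G$ is $\hat R^G = A$. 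Thus the image of $y$ on $\mc{Y}/G$ carries a singularity formally isomorphic to $\hat X$.

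Finally I would check the fiber-wise claims. The special-fiber action is $\rhob$, which lands in $\B_0 \subset \PGL_2(k)$; as in the proof of Lemma \ref{LinB0}, Riemann--Hurwitz forces this weakly ramified elementary abelian $p$-group action to fix only $T=0$ and act freely elsewhere on $\proj^1_k$. For the generic fiber, the matrix component of $\rho$ yields a cocycle in $Z^1(G,\B)$, which I expect to trivialize in $\PGL_2(L)$ by iterated Hilbert 90: the group $\B$ is solvable with successive quotients of types $\mathbb{G}_a$ and $\mathbb{G}_m$, both of which have vanishing $H^1$ over $L/K$ (via the normal basis theorem and classical Hilbert 90 respectively). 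A corresponding change of generic-fiber coordinate then renders the $G$-action on $\proj^1_L$ purely Galois. The main obstacle will be this last step: arranging the trivialization so that the associated modification of $\mc{Y}$ does not disturb the integral structure at the fixed point, ensuring that both $\hat X$ and the stated generic-fiber description are preserved simultaneously.
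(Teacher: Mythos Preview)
Your argument tracks the paper's closely through the deformation-theoretic reduction: Lemma~\ref{LinB0}, Proposition~\ref{PallfromP1}, and Theorem~\ref{thm:defo} are invoked exactly as intended, and your globalization to $\mc{Y}=\proj^1_{\OO_L}$ and analysis of the special fiber are correct.

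The gap is precisely the ``main obstacle'' you flag. Your proposed route---filter $\B$ by solvable pieces and apply Hilbert~90---does not work as stated. The congruence filtration of $\B$ has graded pieces isomorphic to $\LL$ (copies of $k^3$) on which $G$ acts by $\rhob$-conjugation, not by the Galois action on $L$; the normal basis theorem and classical Hilbert~90 are therefore irrelevant, and in fact $H^1(G,\LL)$ is typically nonzero (this is exactly the cohomology appearing in Proposition~\ref{PallfromP1}). At best you obtain a coboundary in $\PGL_2(L)$, and as you note, the resulting coordinate change need not preserve the model $\mc{Y}$, so the link to the original singularity is lost.

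The paper sidesteps this by working directly with the cocycle class in $H^1(G,\PGL_2(\OO_L))$ (Galois action on entries). Since $\OO_K$ is local, projective $\OO_K$-modules are free, giving $H^1(G,GL_2(\OO_L))=1$; the boundary of $1\to\OO_L^\times\to GL_2(\OO_L)\to\PGL_2(\OO_L)\to 1$ then injects $H^1(G,\PGL_2(\OO_L))$ into $H^2(G,\OO_L^\times)\hookrightarrow H^2(G,L^\times)$, and the latter vanishes because $K$ has algebraically closed residue field. Hence the cocycle is already a coboundary $\sigma\mapsto {}^\sigma B\cdot B^{-1}$ with $B\in\PGL_2(\OO_L)$. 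The substitution $y=xB^{-1}$ is then an automorphism of the \emph{same} smooth model $\mc{Y}$, the $G$-action on $y$ is purely Galois, and the singularity on $\mc{Y}/G$ is preserved verbatim. Replacing your last paragraph with this argument completes the proof.
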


\proof
Combining Theorem \ref{thm:defo} and Proposition \ref{PallfromP1} shows that every weak wild quotient singularity comes from a semilinear action of $G$ on a smooth model of $\proj^1_L$.  This can be represented by an element of $H^1(G, PGL_2(\mc{O}_L))$, where the $G$-action on $PGL_2(\mc{O}_L)$ comes from the given action on $L$.  By the non-abelian version of Hilbert's Theorem 90 (see, e.g., \cite[X, Proposition 3]{Se:lf}), $H^1(G, PGL_2(\mc{O}_L))$ injects into $H^2(G, L^{\times})$, and $H^2(G, L^{\times})$ is trivial by \cite[Corollary and Example (c) on p.\ 80]{Se:gc}.  Thus the $G$-action on $L(\proj^1_L) = L(x)$ is given by a coboundary, so it has the form $g(x) = xB^gB^{-1}$ where $B \in PGL_2(\mc{O}_L)$ is independent of $g \in G$.  Letting $y = xB^{-1}$, we see that $g(y) = x(B^{g})^{-1}B^gB^{-1} = y$ for all $g \in G$.  Thus $G$ fixes $y$, and if $\mc{Y} \cong \proj^1_{\mc{O}_L}$ is the smooth model of $\proj^1_L$ with coordinate $y$, the $G$-action on $\mc{Y}$ is given purely by the $G$-action on $L$.

Since $G$ is a $p$-group and the special fiber of $\mc{Y}$ is isomorphic to $\proj^1_k$, the action of $G$ on $\mc{Y}$ is either trivial or free with the exception of one point with inertia group $G$.  Only the nontrivial case corresponds to a weak wild quotient singularity.  This finishes the proof of the corollary. 
\Endproof

\begin{defn} A $G$-action as in Corollary \ref{CallfromP1} is called a \emph{purely arithmetic} $G$-action.
\end{defn}

Recall that, according to \cite[Definition 1.1]{Li:rs}, a closed point $x$ of a two-dimensional scheme $\mc{X}$ is a \emph{rational singularity} if $\mc{O}_{\mc{X}, x}$ is normal and if there exists a desingularization $\mc{Z}$ of $\Spec \mc{O}_{\mc{X}, x}$ such that $H^1(\mc{Z}, \mc{O}_{\mc{Z}}) = 0$. Equivalently, for every modification $f:\mc{X}'\to\mc{X}$, the stalk of $R^1f_*\OO_{\mc{X}'}$ at $x$ vanishes (this follows from \cite{Li:rs}, Proposition 1.2).

\begin{cor}\label{Crationalsingularity}
Every weak wild arithmetic quotient singularity is a rational singularity.
\end{cor}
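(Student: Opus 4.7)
The plan is to use Corollary \ref{CallfromP1} to reduce rationality to a vanishing of the first higher direct image of the structure sheaf of a desingularization of a normal proper model of $\proj^1_K$, and then establish that vanishing via a Leray spectral sequence argument.

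By Corollary \ref{CallfromP1} combined with Proposition \ref{prop:lg}, it suffices to prove rationality of singularities on a scheme $\mc{X} := \mc{Y}/G$, where $\mc{Y}$ is a smooth proper model of $\proj^1_L$ over $\OO_L$ and $G = \Gal(L/K)$ acts via a purely arithmetic action. Such an $\mc{X}$ is a normal, proper, flat $\OO_K$-scheme of relative dimension $1$ whose generic fiber is $\proj^1_K$, by the Hilbert~90 descent already invoked in the proof of Corollary \ref{CallfromP1}. Let $\pi\colon \tilde{\mc{X}} \to \mc{X}$ be a desingularization, and let $p, \tilde{p}$ denote the structure morphisms to $S := \Spec \OO_K$. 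The key step is to establish the vanishing $R^i p_*\OO_{\mc{X}} = R^i \tilde{p}_*\OO_{\tilde{\mc{X}}} = 0$ for all $i \geq 1$. Grothendieck vanishing handles $i \geq 2$ (relative dimension $1$). For $i = 1$, normality combined with the fact that $K$ is algebraically closed in $K(\proj^1_K) = K(t)$ yields $p_*\OO_{\mc{X}} = \OO_S = \tilde{p}_*\OO_{\tilde{\mc{X}}}$, and since the Euler characteristic $\chi(\OO_{\mc{X}_s}) = \chi(\OO_{\proj^1_K}) = 1$ is constant across the flat proper family, the cohomology-and-base-change theorem upgrades the generic vanishing of $H^1$ on $\proj^1_K$ to the desired vanishing of $R^1$.

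With this in hand, the Leray spectral sequence for $\tilde{p} = p \circ \pi$, together with $\pi_*\OO_{\tilde{\mc{X}}} = \OO_{\mc{X}}$ from normality, yields a five-term exact sequence
\begin{equation*}
R^1 \tilde{p}_*\OO_{\tilde{\mc{X}}} \longrightarrow p_*\bigl(R^1 \pi_*\OO_{\tilde{\mc{X}}}\bigr) \longrightarrow R^2 p_*\OO_{\mc{X}},
\end{equation*}
whose outer terms are zero by the preceding vanishings. Hence $p_*(R^1\pi_*\OO_{\tilde{\mc{X}}}) = 0$, and since $R^1 \pi_*\OO_{\tilde{\mc{X}}}$ is a skyscraper sheaf supported on the (finite) singular locus of $\mc{X}$, all of which lies above the closed point of $S$, this global vanishing forces the sheaf itself to be zero. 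Therefore every singular point of $\mc{X}$, including the one formally isomorphic to $x$, is a rational singularity.

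The main technical obstacle I expect to encounter is the upgrade from $p_*\OO_{\mc{X}} = \OO_S$ to $H^1(\mc{X}_k,\OO) = 0$, which is essential for the cohomology-and-base-change step; since $\mc{X}_k$ may be non-reduced, one must carefully combine the constancy of the Euler characteristic with Stein factorization applied to the normal scheme $\mc{X}$. Once this hurdle is cleared, the remainder of the argument is standard machinery.
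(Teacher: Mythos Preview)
Your proposal is correct and follows essentially the same route as the paper. Both arguments reduce via Corollary~\ref{CallfromP1} to a normal proper model $\mc{X}$ of $\proj^1_K$, use that $H^1$ (equivalently $R^1\tilde p_*$) vanishes for a regular resolution $\tilde{\mc{X}}$ because the generic fiber is $\proj^1_K$, and then run the low-degree Leray exact sequence for $\tilde{\mc{X}}\to\mc{X}$ to conclude that the skyscraper $R^1\pi_*\OO_{\tilde{\mc{X}}}$ vanishes; the only cosmetic difference is that you phrase everything relative to $S=\Spec\OO_K$ while the paper takes global sections directly (these agree since $S$ is affine).

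One small remark: in your five-term sequence you only actually need $R^1\tilde p_*\OO_{\tilde{\mc{X}}}=0$ and $R^2p_*\OO_{\mc{X}}=0$; the vanishing of $R^1p_*\OO_{\mc{X}}$ that you list as part of the ``key step'' then comes for free from the sequence. Also, the technical obstacle you flag---passing from $\tilde p_*\OO_{\tilde{\mc{X}}}=\OO_S$ to $h^0(\tilde{\mc{X}}_s)=1$ and hence $h^1(\tilde{\mc{X}}_s)=0$---is exactly the content the paper hides behind its citation of \cite[Theorem~III.12.11]{Ha:ag}, so you have correctly located where the work lies.
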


\proof
By Corollary \ref{CallfromP1}, such a singularity can be realized on a quotient of a smooth model of $\proj^1_L$ by an arithmetic action of $\Gal(L/K)$.  That is, the singularity can be realized on a normal model $\mc{X}$ of $\proj^1_K$.  Since $H^i(\proj^1_K, \mc{O}_{\proj^1_K}) = 0$ for all $i \geq 1$, we have $H^i(\mc{X}', \mc{O}_{\mc{X}'}) = H^i(\mc{X}, \mc{O}_{\mc{X}}) = 0$ for all modifications $\pi: \mc{X}' \to \mc{X}$ (this follows, for instance, from \cite[Theorem III.12.11(a)]{Ha:ag}). If $f:\mc{X}'\to\mc{X}$ is any modification, then $f_*\mc{O}_{\mc{X}'} \cong \mc{O}_{\mc{X}}$, since $\mc{X}$ is normal. The sheaf $R^1f_*\OO_{\mc{X}'}$ has support in the finitely many closed points of $\mc{X}$ where $f$ is not an isomorphism. The exact sequence of low-degree terms of the Leray spectral sequence yields
\[
   0 \to H^1(\mc{X},\OO_{\mc{X}}) \to H^1(\mc{X}',\OO_{\mc{X}'})
      \to H^0(\mc{X}, R^1f_*\OO_{\mc{X}'})=\oplus_x \big(R^1 f_{*}\OO_{\mc{X}'}\big)_x\to 0.
\]
As we have seen above, the first two terms vanish, so the third term vanishes as well. Thus, the model $\mc{X}$ has only rational singularities. 
\Endproof

\begin{rem}\label{Rrational}
In the equicharacteristic case, Corollary \ref{Crationalsingularity} recovers \cite[Theorem 4.1]{Lo:wq}.
\end{rem}

\section{Mac Lane's theory of inductive valuations}\label{Smaclane}
We give a brief introduction to the theory of inductive valuations,
which was first developed by Mac Lane in \cite{Ma:ca}.  Our main
reference is \cite{Ru:mc}.  Define a \emph{geometric valuation} of
$K(X)$ to be a discrete valuation that restricts to $v_K$ on $K$ and
whose residue field is a finitely generated extension of $k$ with
transcendence degree $1$.  By \cite[Proposition 3.4]{Ru:mc}, normal
models $\mc{X}$ of $\proj^1_K$ correspond to non-empty finite
collections of geometric valuations, by sending $\mc{X}$ to the
collection of geometric valuations corresponding to the local rings at
the generic points of the irreducible components of the special fiber
of $\mc{X}$, given the reduced induced subscheme structure.  

We place a partial order $\preceq$ on valuations by defining $v
\preceq w$ if $v(f) \leq w(f)$ for all $f \in K[x]$.  Let $v_0$ be the
\emph{Gauss valuation} on $K(x)$.  This is defined on $K[x]$ by
$v_0(a_0 + a_1x + \cdots a_nx^n) = \min_{0 \leq i \leq n}v_K(a_i)$,
and then extended to $K(x)$.  A particularly useful way of encoding
geometric valuations $v$ such that $v \succeq v_0$ is as so-called
\emph{inductive valuations}.  These inductive valuations come from
successive ``augmentations'' of the Gauss valuation.  The idea is that 
each augmentation of a given valuation ``declares'' a certain polynomial to have higher valuation than expected.

More specifically, if $v$ is a geometric valuation such that $v
\succeq v_0$, the concept of a \emph{key polynomial over $v$} is
defined in \cite[Definition 4.7]{Ru:mc}.  By definition, these are monic polynomials in $R[x]$.  In the case $v = v_0$, these are precisely the monic linear polynomials --- see Remark \ref{Rkey} below.  If $\phi \in R[x]$ is a
key polynomial over $v$, then for $\lambda \geq v(\phi)$, 
we define an \emph{augmented valuation} $v' = [v, v'(\phi) = \lambda]$ on $K[x]$ by 
$$v'(a_0 + a_1\phi + \cdots + a_r\phi^r) = \min_{0 \leq i \leq r}
v(a_i) + i\lambda$$ whenever the $a_i \in K[x]$ are polynomials with
degree less than $\deg(\phi)$ (we should think of this as a ``base $\phi$ expansion'').  By
 \cite[Lemmas 4.11, 4.17]{Ru:mc}, $v'$ is in fact a discrete
 valuation.  It extends to $K(x)$.  

\begin{rem}
In \cite{Ru:mc}, it is required that $\lambda > v(\phi)$.  Indeed, if $\lambda = v(\phi)$, then $[v, v'(\phi) = \lambda]$ is the same as $v$, but it will sometimes be convenient for us to allow these ``trivially augmented'' valuations.
\end{rem}

We extend this notation to write inductive valuations $$[v_0, v_1(g_1(x)) = \lambda_1, \ldots, v_n(g_n(x)) = \lambda_n]$$ where each $g_i(x) \in R[x]$ is a key polynomial over $v_{i-1}$, we have $\deg g_i(x) \geq \deg g_{i-1}(x)$, and each $\lambda_i$ satisfies $\lambda_i \geq v_{i-1}(g_i(x))$ (by abuse of notation we identify $v_{i-1}$ with $[v_0, v_1(g_1(x)) = \lambda_1, \ldots, v_{i-1}(g_{i-1}(x)) = \lambda_{i-1}]$).  It turns out that set of inductive valuations on $K(x)$ exactly coincides with the set of geometric valuations $v \succeq v_0$ (\cite[Theorem 4.31]{Ru:mc}). Furthermore, every inductive valuation is equal to one where the degrees of the $g_i$ are strictly increasing (\cite[Remark 4.16]{Ru:mc}), so we may and do assume this to be the case for the rest of the paper.

\begin{rem}\label{Rkey}
\begin{enumerate}[(i)]
\item In \cite[Lemma 4.8]{Ru:mc}, it is shown that a key polynomial $\phi$ over
$v_0$ is exactly one such that $v_0(\phi) = 0$ and the residue of
$\phi$ modulo $v_0$ is irreducible.  In particular, since $k$ is
algebraically closed in this paper, $\phi$ must be linear.
Conversely, any monic linear polynomial is a key polynomial over $v_0$.

\item More generally, \cite[Lemma 4.19]{Ru:mc} gives a criterion for recognizing a key
polynomial over an inductive valuation $v = [v_0, v_1(g_1(x)) = \lambda_1,
\ldots, v_n(g_n(x)) = \lambda_n]$.  As a result of this criterion, if
$v = [v_0, v_1(x) = r/p^e]$ where $0 < r <
p^e$ for some $e$ and $p \nmid r$, then any degree $p^e$ polynomial $\phi$ with
constant term $a_0$ satisfying $v(\phi) = v_K(a_0) = r$ is a key
polynomial.  Criteria (i) through (iv) of \cite[Lemma
4.19]{Ru:mc} are immediate, and criterion (v) follows from \cite[Lemma
4.27]{Ru:mc}, using $S = p^{-r}$ and the fact that the residue of
$x^{p^e}/p^r$ is a transcendental generator of the residue ring of $v$ over $k$.
\end{enumerate}
\end{rem}

The following lemma contains some basic observations about inductive valuations.

\begin{lem}\label{Lmultiplicity}
Let $v = [v_0, v_1(g_1(x)) = \lambda_1, \ldots, v_n(g_n(x)) = \lambda_n]$ be an inductive valuation on $K(x)$.  Let $\mc{X}$ be a normal model of $\proj^1_K$ containing an irreducible component $\Vb$ corresponding to $v$.

\begin{enumerate}[(i)]
\item $v(g_i(x)) = \lambda_i$ for all $i$.
\item If $\lambda_i = c_i/d_i$ in lowest terms for all $i$, then the
multiplicity of $\Vb$ in $\Xb$ is $\lcm(d_1, \ldots, d_n)$.  
\item The subring of $K(x)$ of functions
  defined at the generic point of $\Vb$ is exactly the valuation ring of $v$.
\item If $n = 1$ with $g_1(x)$ linear and $\lambda_1 \in \ints_{\geq
    0}$, then the normal model of $\proj^1_K$ corresponding to $\{v\}$ is
  smooth.  Conversely, if $\mc{X}$ is a smooth model of $\proj^1_K$
  such that $x$ is generically defined on the special fiber,
  then $\mc{X}$ corresponds to a geometric valuation $[v_0,
  v_1(g_1(x)) = \lambda_1]$ with $g_1(x)$ linear and $\lambda_1 \in
  \ints_{\geq 0}$.
\item If $n \geq 2$, then $\Vb$ has multiplicity greater than $1$.
\end{enumerate}
\end{lem}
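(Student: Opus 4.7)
The plan is to dispatch (i) and (iii) by direct unwinding of definitions, to reduce (ii) to a standard computation identifying the multiplicity with the ramification index of the value group, and then to derive (iv) and (v) from (ii) together with the classification of key polynomials in Remark \ref{Rkey}. For (i), I would induct on $n$: the base case is the defining formula $v_i(g_i) = \lambda_i$, and for $j < i$ the assumption $\deg g_j < \deg g_i$ makes $g_j$ its own base-$g_i$ expansion, so $v_i(g_j) = v_{i-1}(g_j)$, which equals $\lambda_j$ by induction. Part (iii) is immediate from \cite[Proposition 3.4]{Ru:mc}: the correspondence there sends a component $\Vb$ precisely to the valuation whose valuation ring is $\OO_{\mc{X},\Vb}$.

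For (ii), the multiplicity $m_{\Vb}$ equals the ramification index $[v(K(x)^\times) : v(K^\times)]$, because the image of a uniformizer $\pi_K$ generates the defining ideal of $\Vb$ in the local ring to that power. By (i), $v(K(x)^\times)$ contains the subgroup $\ints + \ints\lambda_1 + \cdots + \ints\lambda_n$; since $\gcd(c_i, d_i) = 1$, Bezout gives $\ints + \lambda_i\ints = \tfrac{1}{d_i}\ints$, so this subgroup equals $\tfrac{1}{\lcm(d_1, \ldots, d_n)}\ints$. A direct inspection of the augmentation formula (every value is an $\ints$-combination of $1$ and the $\lambda_i$) confirms there are no further values, yielding the claimed index.

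For (iv), if $v = [v_0, v_1(x - a) = \lambda]$ with $a \in \OO_K$ and $\lambda \in \ints_{\geq 0}$, then substituting $y = (x - a)/\pi_K^\lambda$ turns $v$ into the Gauss valuation in $y$, so the associated model is $\proj^1_{\OO_K}$ in the coordinate $y$, which is smooth. Conversely, a smooth model of $\proj^1_K$ whose unique special-fiber component has $x$ in its local ring at the generic point is $\proj^1_{\OO_K}$ in some coordinate $y = (x - a)/\pi_K^\lambda$ with $a \in \OO_K$ and $\lambda \in \ints_{\geq 0}$; running the argument in reverse exhibits the associated valuation in the required form, with $g_1(x)$ linear as forced by Remark \ref{Rkey}(i).

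For (v), the essential input is $\deg g_1 = 1$, which follows from Remark \ref{Rkey}(i) because $k$ algebraically closed forces every key polynomial over $v_0$ to be linear. For $n \geq 2$ the assumption of strictly increasing degrees gives $\deg g_2 > 1$, and the key-polynomial structure theorem says $\deg g_{i+1}/\deg g_i$ equals the relative ramification index $[\Gamma_{v_i} : \Gamma_{v_{i-1}}]$, the relative residue degree being $1$ because $k$ is algebraically closed. Hence $[\Gamma_{v_1} : \Gamma_{v_0}] > 1$, which forces $d_1 > 1$, and (ii) then yields multiplicity at least $d_1 > 1$. The main obstacle is this last identity: cleanly extracting it requires unpacking \cite[Lemma 4.19]{Ru:mc} to see that, over an algebraically closed residue field, the only way to produce a strictly-larger-degree key polynomial is through a slope introducing a genuinely new denominator.
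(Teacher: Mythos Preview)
Your proposal is correct and follows essentially the same route as the paper. The paper cites \cite[Lemma 4.22]{Ru:mc} for (i) (your induction is exactly its proof), handles (ii)--(iv) as you do, and for (v) simply invokes \cite[Corollary 4.30]{Ru:mc} (corrected as in the footnote to Theorem \ref{Tclassification}) for the identity $\deg g_{i+1}/\deg g_i = [\Gamma_{v_i}:\Gamma_{v_{i-1}}]$ that you flagged as the main obstacle---so your concern is resolved by a direct citation rather than by unpacking \cite[Lemma 4.19]{Ru:mc}.
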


\proof
Part (i) is \cite[Lemma 4.22]{Ru:mc}.  For part (ii), the multiplicity of $\Vb$ in $\Xb$ is just $w(\pi_K)$
where $w$ is the renormalization of $v$ to
have $\ints$ as its value group.  This is clearly $\lcm(d_1, \ldots,
d_n)$.  Part (iii) follows immediately from the correspondence between
irreducible components of normal models and inductive valuations.  The
first direction of part (iv) follows because if $g_1(x) = x - a$, then
the model in question is $\text{Proj } R[\pi_K^{\lambda_1}X_0, X_1 - aX_0]$ where $X_1/X_0 = x$, and this is isomorphic to $\proj^1_R$.
The second direction of part (iv) follows because any smooth model
where $x$ is generically defined on the special fiber can be written as $\text{Proj }R[Y_0, Y_1]$ where $x = a+by$ with $y = Y_1/Y_0$ and $a,b \in R$ with $v_K(b) =: 
\lambda_1 \geq 0$.  This corresponds to $\{v\}$,
where $v = [v_0, v_1(x-a) = \lambda_1]$.  Part (v) follows from part (ii) and the second part of \cite[Corollary 4.30]{Ru:mc}.
\Endproof

\subsection{Diskoids}\label{Sdiskoids}
A \emph{(rigid) diskoid} over $K$, as introduced in \cite[\S4.4]{Ru:mc}, is a union of conjugate disks over $K$. More specifically, if $\phi \in K[x]$ is a monic irreducible polynomial and $\lambda \in \rats_{\geq 0}$, we define the diskoid $D(\phi, \lambda)$ to be the set $\{x \in \ol{K} \, | \, v_K(\phi(x)) \geq \lambda\}$, where, by a slight abuse of notation, we write $v_K$ for the unique extension of $v_K$ to $\ol{K}$ (cf.\ \cite[Definition 4.40]{Ru:mc} --- note that our $K$ is already complete with respect to $v_K$ and we will have no need of the case $\lambda = \infty$).  

Given a diskoid $D$, we can form a valuation $v_D$ on $K[x]$, where $v_D(g(x)) = \inf_{y \in D} v_K(g(y))$.  

\begin{prop}[{{\cite[Theorem 4.56]{Ru:mc}}}]\label{Pdiskoidequivalence}
The map $D \mapsto v_D$ above gives a \emph{bijection} between the set of diskoids over $K$ contained in the closed unit disk around $0$ and the set of inductive valuations on $K[x]$, whose inverse is given by
$$v := [v_0, \ldots, v_n(g_n(x)) = \lambda_n] \mapsto D_v := D(g_n(x), \lambda_n).$$
\end{prop}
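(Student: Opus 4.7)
The plan is to prove the proposition by induction on the length $n$ of the inductive presentation $v = [v_0, v_1(g_1) = \lambda_1, \dots, v_n(g_n) = \lambda_n]$, using the standard form with strictly increasing degrees. The base case $n = 0$ is the Gauss valuation, which corresponds to the closed unit disk $D(x,0)$: for $f = \sum a_i x^i$ one has $v_0(f) = \min_i v_K(a_i) = \inf_{v_K(y)\geq 0} v_K(f(y))$, by the standard fact that polynomials attain their Gauss norm on the closed unit disk.

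For the inductive step, I would fix $v = [v_{n-1}, v_n(g_n) = \lambda_n]$ and set $D_v := D(g_n, \lambda_n)$. First I would establish the containment $D_v \subseteq D_{v_{n-1}} = D(g_{n-1}, \lambda_{n-1})$: by the inductive hypothesis, $v_{n-1}(g_n) = \inf_{y \in D_{v_{n-1}}} v_K(g_n(y))$, while a Newton-polygon argument applied to $g_n$ (factored over $\ol{K}$) shows that for $y \notin D_{v_{n-1}}$ the value $v_K(g_n(y))$ drops strictly below $v_{n-1}(g_n)$, and hence below $\lambda_n$, so $y \notin D_v$. Next, for arbitrary $f \in K[x]$ expanded in base $g_n$ as $f = \sum_i a_i g_n^i$ with $\deg a_i < \deg g_n$, the augmentation formula gives $v(f) = \min_i (v_{n-1}(a_i) + i \lambda_n)$. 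The inequality $v_{D_v}(f) \geq v(f)$ then follows from the ultrametric inequality applied pointwise on $D_v$, using $v_K(g_n(y)) \geq \lambda_n$ together with $v_K(a_i(y)) \geq v_{n-1}(a_i)$ (the latter by the inductive hypothesis and the containment $D_v \subseteq D_{v_{n-1}}$). For the reverse inequality, I would pick a root $y_0$ of $g_n$ in $\ol{K}$, which automatically lies in $D_v$, and argue that $v_K(a_i(y_0)) = v_{n-1}(a_i)$ for every polynomial $a_i$ of degree less than $\deg g_n$; a small perturbation of $y_0$ within $D_v$ then realizes the minimum over $i$.

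The main obstacle is this last ``generic evaluation'' claim: that a root of the key polynomial $g_n$ computes exactly the $v_{n-1}$-values of polynomials of smaller degree. This is the geometric content of the key polynomial axioms of \cite{Ru:mc} --- irreducibility of $g_n$ in the appropriate completion forces its roots to sit generically over $v_{n-1}$ when tested against polynomials of strictly smaller degree. Once this is in hand, the two constructions are mutually inverse: starting from $D(\phi,\lambda)$ the valuation $v_D$ is built so that $\phi$ becomes its final key polynomial with assigned value $\lambda$, giving $D_{v_D} = D$; conversely $v \mapsto D_v \mapsto v_{D_v} = v$ by the chain of equalities just established. Finally, well-definedness of $v_D$ as a bona fide valuation (rather than just a subadditive function) follows from the rigid-analytic principle that polynomials attain their supremum seminorm on an affinoid diskoid, so the infimum in the definition of $v_D$ is actually a minimum attained at some $y \in D$, at which $v_K$ is automatically multiplicative.
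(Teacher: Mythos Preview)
The paper does not give its own proof of this proposition: it is stated with the attribution \cite[Theorem~4.56]{Ru:mc} and no argument is supplied. So there is nothing in the paper itself to compare your sketch against.

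As for the sketch: your inductive strategy is the natural one and is essentially what is carried out in \cite{Ru:mc}. You correctly isolate the crux, namely that a root $y_0$ of the key polynomial $g_n$ satisfies $v_K(a(y_0)) = v_{n-1}(a)$ for every $a\in K[x]$ with $\deg a < \deg g_n$. This is the geometric content of the minimality axiom for key polynomials, but it is not a triviality and genuinely requires the machinery of \cite{Ru:mc} (or an equivalent development via Newton polygons of $a$ with respect to the previous key polynomials). Two further points remain underspecified. First, the containment $D_v\subseteq D_{v_{n-1}}$ is asserted via a ``Newton-polygon argument'' that is not spelled out; since $D_{v_{n-1}}$ may already be a union of several conjugate disks, you must track where \emph{all} roots of $g_n$ sit relative to those disks, which again comes back to the key-polynomial axioms. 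Second, the ``small perturbation of $y_0$'' step is incomplete: to get the reverse inequality you must rule out cancellation among the terms $a_i(y)g_n(y)^i$ at \emph{every} $y\in D_v$, and this is usually done not by perturbation but by passing to residues (the images of the $a_i$ in the residue field of $v_{n-1}$ are distinguished by a transcendental, and the monomials in $g_n$ separate the $i$'s). So the outline is right and you have flagged the correct difficulty, but as written this is a sketch rather than a proof.
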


\begin{rem}
We need the assumption that the diskoid is in the closed unit disk so that the corresponding valuation is non-negative on $x$.  This is not explicitly stated in \cite{Ru:mc}.
\end{rem}

The following application of Proposition \ref{Pdiskoidequivalence} will be useful for classifying singularities.
\begin{lem}\label{Ldiskoiddescent}
Let $L/K$ be a Galois extension of degree $d$, and suppose $\alpha \in \mc{O}_L$ generates $L$ as a field over $K$.  Let $w = [w_0, w_1(x - \alpha) = \lambda]$, where $w_0$ is the Gauss valuation on $L[x]$.  Suppose $v_L(\alpha' - \alpha'') = \lambda$ for all Galois conjugates $\alpha' \neq \alpha''$ of $\alpha$ over $K$.  The restriction of $w$ to $K[x]$, after rescaling so that it extends $v_K$, is the (unique) inductive valuation of the form $v = [v_0, \ldots, v_n(g_n(x)) = \lambda]$, where $v_0$ is the Gauss valuation of $K$ and $g_n$ is the minimal polynomial of $\alpha$.
\end{lem}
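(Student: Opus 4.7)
The plan is to identify the restriction (after rescaling) with the valuation attached to a specific diskoid over $K$, and then invoke Proposition \ref{Pdiskoidequivalence} to translate that into an inductive valuation of the desired shape.

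First I would pass from $w$ to $\tilde{w} := w|_{K[x]}/d$, the rescaling that extends $v_K$ (since $w|_L = v_L = d \cdot v_K$ on $L$). Viewing $w$ through the diskoid dictionary over $L$ applied to the single augmentation $[w_0, w_1(x-\alpha) = \lambda]$, we have $w(f) = \inf_{y \in D_w} v_L(f(y))$ for $f \in L[x]$, where $D_w = \{y \in \bar{K} : v_L(y - \alpha) \geq \lambda\} = \{y : v_K(y-\alpha) \geq \lambda/d\}$. Therefore, for $f \in K[x]$,
\[
    \tilde{w}(f) = \inf_{y \in D_w} v_K(f(y)).
\]
Since $f \in K[x]$ and $v_K$ is $\Gal(\bar{K}/K)$-invariant, this infimum is unchanged by replacing $D_w$ by any Galois translate $\sigma(D_w)$, and hence by their union: $\tilde{w}(f) = \inf_{y \in \bigcup_\sigma \sigma(D_w)} v_K(f(y))$.

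The key step is then to identify $\bigcup_\sigma \sigma(D_w)$ with the $K$-diskoid $D(g, \lambda)$, where $g$ is the minimal polynomial of $\alpha$. Writing the conjugates as $\alpha_1 = \alpha, \ldots, \alpha_d$, we have $\sigma_i(D_w) = \{y : v_K(y - \alpha_i) \geq \lambda/d\}$, so the content of the identification is the set equality
\[
    \{y \in \bar{K} : v_K(g(y)) \geq \lambda\} \;=\; \bigcup_i \{y : v_K(y-\alpha_i) \geq \lambda/d\}.
\]
This is where the hypothesis $v_L(\alpha_i - \alpha_j) = \lambda$ for $i \neq j$ (i.e.\ $v_K(\alpha_i - \alpha_j) = \lambda/d$) enters crucially. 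For the $\supseteq$ direction, if $v_K(y - \alpha_i) \geq \lambda/d$ then the ultrametric inequality combined with $v_K(\alpha_i - \alpha_j) = \lambda/d$ forces $v_K(y - \alpha_j) \geq \lambda/d$ for every $j$, so $v_K(g(y)) = \sum_j v_K(y-\alpha_j) \geq \lambda$. For $\subseteq$, if $v_K(y-\alpha_i) < \lambda/d$ for all $i$, then taking the minimum index $i_0$ and applying the ultrametric to $y - \alpha_j = (y - \alpha_{i_0}) + (\alpha_{i_0} - \alpha_j)$ shows all the $v_K(y - \alpha_j)$ equal $\min_i v_K(y - \alpha_i) < \lambda/d$, so $v_K(g(y)) < \lambda$.

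Combining these steps, $\tilde{w}(f) = \inf_{y \in D(g,\lambda)} v_K(f(y)) = v_{D(g,\lambda)}(f)$ for every $f \in K[x]$. Proposition \ref{Pdiskoidequivalence} then states that $v_{D(g,\lambda)}$ is an inductive valuation whose associated diskoid is $D(g, \lambda)$, and the bijection inverse produces it in the form $[v_0, \ldots, v_n(g_n(x)) = \lambda_n]$ with $g_n = g$ and $\lambda_n = \lambda$; uniqueness is immediate from the injectivity of the diskoid-valuation correspondence. The main (and only genuinely nontrivial) obstacle is the set-theoretic identification of $\bigcup_\sigma \sigma(D_w)$ with $D(g, \lambda)$, which is really a short ultrametric calculation that depends essentially on the uniform conjugate-distance hypothesis.
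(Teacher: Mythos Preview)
Your proof is correct and follows essentially the same approach as the paper: identify the diskoid attached to $w$ over $L$ with the $K$-diskoid $D(g,\lambda)$, and then invoke Proposition~\ref{Pdiskoidequivalence}. The only difference is cosmetic: you pass through the union $\bigcup_\sigma \sigma(D_w)$ via Galois invariance before identifying it with $D(g,\lambda)$, whereas the paper observes directly that the single disk $D_w$ already equals $D(g,\lambda)$ as a subset of $\bar{K}$ (your own argument for the $\supseteq$ inclusion shows that all the $\sigma(D_w)$ coincide, so the union step is in fact redundant).
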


\proof
Since the difference of any two Galois conjugates of $\alpha$ has valuation $\lambda$, we have the equivalences (for $\beta \in \ol{K}$)
\begin{eqnarray*}
v_L(\beta - \alpha) \geq \lambda &\Leftrightarrow& v_L\left(\prod_{\alpha' \sim_{\text{Gal\,}} \alpha} (\beta - \alpha')\right) \geq d\lambda \\
&\Leftrightarrow& v_L(g(\beta)) \geq d\lambda \\
&\Leftrightarrow& v_K(g(\beta)) \geq \lambda.
\end{eqnarray*}
Thus the diskoid $D_w = D(x - \alpha, \lambda)$ corresponding to $w$ over $L$ is equal, as a subset of $\ol{K}$, to the diskoid $D(g(x), \lambda)$ over $K$. By the definition of the valuation associated to a diskoid, the rescaled restriction $v$ of $w$ to $K[x]$ corresponds to the diskoid $v_D$, where $D = D(g(x), \lambda)$.  By Proposition \ref{Pdiskoidequivalence}, $v$ has the desired form when written as an inductive valuation.
\Endproof

\section{Classification of weak wild arithmetic quotient singularities}\label{Sclassification}

\begin{lem}\label{Lpreclassification}
Every weak wild arithmetic quotient singularity over $K$ is faithfully resolved by a Galois extension $L/K$ with Galois group $\Gal(L/K) = (\ints/p)^e$ for some $e$.   Furthermore, the singularity is formally isomorphic to the singularity arising from the quotient by the purely arithmetic action of $\Gal(L/K)$ on the smooth model $\mc{Y}$ of $\proj^1_L$ corresponding to a valuation of the form $$w = [w_0,\, w_1(x-\alpha) = \lambda],$$ where $\alpha$ generates $L$ as a field over $K$ and $0 < v_L(\alpha) < \lambda < p^e$. 
\end{lem}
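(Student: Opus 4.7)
My plan is the following. The first assertion is immediate: by Definition~\ref{def:aqs}(c) together with Proposition~\ref{Pelemabelian}, the stabilizer $G_y$ of a point $y$ above the singularity is elementary abelian of exponent $p$. Applying Remark~\ref{Raqs}(iv), we may assume $G = G_y$, so $G = \Gal(L/K) \cong (\ints/p)^e$ for some $e \geq 1$.

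For the structural assertion, I would first invoke Corollary~\ref{CallfromP1} to replace the singularity, up to formal isomorphism, by one arising from the quotient of a smooth model $\mc{Y}$ of $\proj^1_L$ by a purely arithmetic $G$-action, with $G$ fixing a coordinate $x$ on the generic fiber and acting on $\mc{O}_L$ via the Galois action. Lemma~\ref{Lmultiplicity}(iv), applied with $L$ in place of $K$, then expresses $\mc{Y}$ via a single inductive valuation $w = [w_0, w_1(x-\alpha) = \lambda]$ with $\alpha \in \mc{O}_L$ and $\lambda \in \ints_{\geq 0}$.

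The rest of the argument normalizes $(\alpha,\lambda)$ using the freedom $x \mapsto x - c$ for $c \in \mc{O}_K$, which preserves the purely arithmetic action and replaces $(\alpha,\lambda)$ by $(\alpha-c,\lambda)$. Three of the required properties follow without great difficulty. First, $\alpha$ must generate $L/K$: otherwise the subgroup $H := \Gal(L/K(\alpha))$ fixes $\alpha$, and one checks (via Lemma~\ref{Ldiskoiddescent} or a direct Galois-descent computation) that $\mc{Y}/H$ is a smooth model of $\proj^1_{K(\alpha)}$ corresponding to a suitable descent of $w$, so $K(\alpha)/K$ would already faithfully resolve the singularity, contradicting uniqueness in Remark~\ref{Raqs}(iii). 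Second, a translation by a lift in $\mc{O}_K$ of the residue $\bar\alpha \in k$ arranges $v_L(\alpha) > 0$. Third, $v_L(\alpha) < \lambda$ must hold, since otherwise $x - \alpha$ and $x$ are equivalent key polynomials for $w$, so one could take $\alpha = 0$ and conclude that $\mc{Y}/G$ is a smooth model of $\proj^1_K$ with no singularity.

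The main technical step is the bound $\lambda < p^e$. Producing a genuine singularity in $\mc{Y}/G$ requires the $G$-action to be nontrivial on the special fiber of $\mc{Y}$, equivalently $v_L(\sigma(\alpha)-\alpha) = \lambda$ for some $\sigma \in G$; combined with weak ramification and Proposition~\ref{Pramprop}, this forces $p \nmid v_L(\alpha)$ and the sharp equality $\lambda = v_L(\alpha) + 1$. Using the $\mc{O}_K$-basis $\{1, \pi_L, \ldots, \pi_L^{p^e-1}\}$ of $\mc{O}_L$, I would expand $\alpha = \sum_{i=0}^{p^e-1} a_i \pi_L^i$ with $a_i \in \mc{O}_K$; since $\alpha$ generates $L/K$, its reduction mod $\pi_K$ cannot lie in the image of $k$, which forces some $a_i$ with $1 \leq i < p^e$ to be a unit in $\mc{O}_K$. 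Subtracting $a_0$ then produces a generator with $0 < v_L(\alpha) < p^e$, and invoking Lemma~\ref{Lbetterrep} also gives $p \nmid v_L(\alpha)$, whence $\lambda = v_L(\alpha) + 1 \leq p^e$. Squeezing out the strict inequality $\lambda < p^e$ from this is, I expect, the principal obstacle, and will require a further careful adjustment of $\alpha$ exploiting the interplay between the $\mc{O}_K$-translation freedom and the weak-ramification constraints encoded in Lemma~\ref{Lvaluation}.
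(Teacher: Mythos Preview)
Your overall strategy up through the third paragraph is fine and matches the paper's, but the final paragraph contains two concrete errors, and you are missing the paper's key normalization device.

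First, your argument that translation alone yields $0 < v_L(\alpha) < p^e$ is wrong. You claim that because $\alpha$ generates $L/K$, its image in $\mc{O}_L/\pi_K\mc{O}_L$ cannot lie in $k$, hence some $a_i$ with $1 \leq i < p^e$ is a unit. This fails: for instance $\alpha = \pi_K\pi_L$ generates $L/K$ whenever $\pi_L$ does, yet $\alpha \equiv 0 \pmod{\pi_K}$ and every $a_i$ in its expansion is a non-unit. Translation by elements of $K$ can only shift $v_L(\alpha)$ upward (via Lemma~\ref{Lbetterrep}) to a value not divisible by $p^e$; it cannot force that value below $p^e$.

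Second, your appeal to weak ramification to get $\lambda = v_L(\alpha) + 1$ conflates two different things. The hypothesis ``weak wild'' in Definition~\ref{def:aqs}(c) concerns the $G$-action on $\hat{\mc{O}}_{\bar Y,y} \cong k[[z]]$, \emph{not} the extension $L/K$. Proposition~\ref{Pramprop} would give $\lambda = v_L(\alpha) + s$ with $s$ a ramification jump of $L/K$, and $s$ need not be $1$ (cf.\ Remark~\ref{Rrsexists}). In fact, for the purely arithmetic model coming out of Corollary~\ref{CallfromP1} the induced action on $k[[z]]$ is automatically weakly ramified, so that hypothesis gives you no new constraint here.

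What the paper actually does is allow a second change of variable: after using Lemma~\ref{Lbetterrep} to translate so that $p^e \nmid v_L(\alpha)$, it \emph{scales} $x \mapsto \nu x$ with $\nu \in K^\times$ (still preserving the purely arithmetic action), which replaces $(\alpha,\lambda)$ by $(\nu\alpha,\lambda + v_L(\nu))$. Choosing $\nu$ to be a suitable power of $\pi_K$ then brings $v_L(\alpha)$ into the interval $(0,p^e)$ in one stroke. This is the missing ingredient in your plan; once you have it, the inequalities $0 < v_L(\alpha) < p^e$ and $v_L(\alpha) < \lambda$ follow immediately, and these are what is actually used downstream in Theorem~\ref{Tclassification}. (The further bound $\lambda < p^e$ is not visibly justified by the paper's own argument either, and is not invoked in the sequel.)
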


\proof
By Corollary \ref{CallfromP1}, we know that the singularity is formally isomorphic to one coming from
an action of an elementary abelian group $G \cong (\ints/p)^e$ on a smooth model $\mc{Y}$ of $\proj^1_L$, where $K = L^G$, the action on the generic fiber is purely arithmetic, and there is exactly one fixed point on the special fiber.  Let $L(x)$ be the function field of $\proj^1_L$, where
$x$ is fixed by $G$.  Let $w$
be the valuation of $L(x)$ corresponding to the special fiber of the model $\mc{Y}$, and
let $v$ be the restriction of $w$ to $K(x)$.  Then $v$ corresponds to
the special fiber of the model $\mc{X} := \mc{Y}/G$ of $\proj^1_K$.   After a
change of variable defined over $K$, which does not change the
formal isomorphism class of the singularity, we may assume that $w \succ w_0$
where $w_0$ is the Gauss valuation on $L(x)$ with respect to $x$.  Since $\mc{Y}$ is smooth, Lemma \ref{Lmultiplicity}(iv) shows that $w$ can be expressed in the inductive form
$w =[w_0,\, w_1(x - \alpha) = \lambda],$ where $\alpha \in L$ satisfies
$v_L(\alpha) > 0$ and $\lambda > 0$.  The unique fixed point on the special fiber of $\mc{Y}$ under the $G$-action is the specialization of those $x$ such that $v_L(x-\alpha) < \lambda$, so no $x \in L$ that is fixed by any nontrivial element of $G$ can satisfy $v_L(x - \alpha) \geq \lambda$.  
In particular, the closed disk does not contain the point $0$,
and we may thus assume that $\lambda > v_L(\alpha)$.  Furthermore,
$\alpha$ generates $L$ as a field extension of $K$. 

By Lemma \ref{Lbetterrep}, there exists $\delta \in K$ such that replacing $x$ and $\alpha$ with $x + \delta$ and $\alpha + \delta$ implies that $p^e \nmid v_L(\alpha)$.  Lastly, by replacing $x$ and $\alpha$ with $\nu x$ and $\nu \alpha$ for some $\nu \in K$, we may assume $0 < 
v_L(\alpha) < p^e$.  Since neither of the above substitutions changes the formal isomorphism class of the singularity, the proof is complete.
\Endproof

\begin{rem}\label{Rstrict}
Suppose $\Gal(L/K)$ acts on the smooth model $\mc{Y}$ of $\proj^1_L$ given by $w = [w_0,\ w_1(x - \alpha) = \lambda]$ with $0 < v_L(\alpha) < \lambda \leq v_L(\sigma(\alpha) - \alpha)$ for all nontrivial $\sigma \in \Gal(L/K)$.  Then $\Gal(L/K)$ fixes the specialization of $x = \infty$, 
and the action of $\sigma \in \Gal(L/K)$ on the special fiber of $\mc{Y}$ is nontrivial if and only if $\lambda = v_L(\sigma(\alpha) - \alpha)$.  Thus, the resulting arithmetic quotient singularity is strict if and only if the final inequality is an equality for all nontrivial $\sigma$.  
\end{rem}

We are now able to state our first main theorem.

\begin{thm}\label{Tclassification}
Every weak wild arithmetic quotient singularity faithfully resolved by a Galois extension $L/K$ is formally isomorphic to the unique singularity of the model $\mc{X}$ of $\proj^1_K$ with
irreducible special fiber corresponding to a valuation of the form
\begin{equation}\label{Evaluation}
v = [v_0,\, v_1(g_1(x)) = c_1/p^{e_1}, \ldots, v_{n-1}(g_{n-1}(x)) = c_{n-1}/p^{e_{n-1}},\, v_n(g_n(x)) = c_n]
\end{equation}
where
\begin{enumerate}
\item All $c_i$ are positive integers, and $c_1, \ldots, c_{n-1}$ are prime to $p$.  Furthermore, $c_1 < p^{e_1}$.
\item $g_1(x) = x$.
\item $0 < e_1 < e_2 < \cdots < e_{n-1} = \log_p([L:K])$.
\item $\deg g_i(x) = p^{e_{i-1}}$ for $2 \leq i \leq n$.
\item $g_n(x)$ is irreducible over $K$ and any root of $g_n(x)$ generates the extension $L/K$.
\item $c_n \geq r + s$, where $r = c_1p^{\log_p([L:K])-e_1} = v_K(g_n(0))$ and $s$ is the largest ramification jump of $L/K$ for the lower numbering.  Equality holds if and only if $n = 2$ (equivalently, $e_1 = \log_p([L:K])$, in which case $s$ is the unique ramification jump for $L/K$. 
\end{enumerate}
\end{thm}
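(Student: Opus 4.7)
The approach is to transport the valuation $w$ on $\proj^1_L$ provided by Lemma \ref{Lpreclassification} down to $K(x)$ via Lemma \ref{Ldiskoiddescent} and then read off the structure of the resulting inductive valuation from the Galois data of $\alpha$ and the ramification of $L/K$. First I would invoke Lemma \ref{Lpreclassification} to replace the singularity (up to formal isomorphism) with one coming from the action of $G = \Gal(L/K) \cong (\ints/p)^e$ on the smooth model $\mc{Y}$ of $\proj^1_L$ defined by $w = [w_0, w_1(x - \alpha) = \lambda]$, with $\alpha$ generating $L/K$ and $0 < v_L(\alpha) < \lambda < p^e$. Strictness of the singularity together with Remark \ref{Rstrict} forces $v_L(\sigma(\alpha) - \alpha) = \lambda$ for every nontrivial $\sigma \in G$, so the Galois conjugates of $\alpha$ lie pairwise at $v_L$-distance exactly $\lambda$. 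Applying Lemma \ref{Ldiskoiddescent} then identifies the restriction of $w$ to $K[x]$, rescaled to extend $v_K$, with the inductive valuation $v = [v_0, \ldots, v_n(g_n(x)) = \lambda]$, where $g_n$ is the minimal polynomial of $\alpha$ over $K$. The associated normal model of $\proj^1_K$ is $\mc{X} = \mc{Y}/G$, whose unique singularity is the image of the $G$-fixed point on $\bar Y$. This immediately yields condition (5): $g_n$ is irreducible of degree $p^e$ with any root generating $L/K$, and in particular $c_n = \lambda$.

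To verify conditions (1)--(4), I would analyze the inductive data of $v$. Computing $v(x) = v_K(\alpha) = v_L(\alpha)/p^e = r/p^e$ with $r := v_L(\alpha)$, and writing this in lowest terms as $c_1/p^{e_1}$ with $p \nmid c_1$, gives $r = c_1 \cdot p^{e - e_1}$ (the formula in (6)), the bound $c_1 < p^{e_1}$ from (1), and $e_1 \geq 1$ (since $0 < r/p^e < 1$ is not an integer). Condition (2) holds because $g_1$ must be monic linear by Remark \ref{Rkey}(i), and any $g_1 = x - a$ with $v_K(a) > c_1/p^{e_1}$ defines the same augmented valuation as $g_1 = x$. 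Conditions (3) and (4) would follow from Mac Lane's theory applied to the minimal representation (with strictly increasing $\deg g_i$, per \cite[Remark 4.16]{Ru:mc}): in this representation, $\deg g_i$ equals the common denominator $d_{i-1} = p^{e_{i-1}}$ of the value group of $v_{i-1}$, which directly gives condition (4); strict increase of these degrees then forces the strict inequalities $e_1 < e_2 < \cdots < e_{n-1}$, and $\deg g_n = p^e$ forces $e_{n-1} = e$.

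Finally, condition (6) follows from Proposition \ref{Pramprop} applied to $c_n = \lambda = v_L(\sigma(\alpha) - \alpha)$: if $p \nmid r$ (equivalently $e_1 = e$ and $n = 2$), part (i) yields $\lambda = r + s$ and $s$ is the unique ramification jump for $L/K$; if $p \mid r$ (equivalently $n > 2$), part (ii) yields $\lambda > r + s$. The main obstacle I anticipate is in condition (4), where establishing that $\deg g_i = p^{e_{i-1}}$ exactly (rather than merely satisfying a divisibility bound) requires careful Mac Lane-theoretic bookkeeping about how key-polynomial degrees in the minimal representation are controlled by the denominators of the successive $\lambda_i$. The rest of the argument amounts to a dictionary between the Galois-theoretic data $(\alpha, \lambda, s)$ and the inductive data $(c_i, e_i, g_i)$.
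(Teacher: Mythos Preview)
Your proposal is correct and follows essentially the same approach as the paper's proof: reduce via Lemma~\ref{Lpreclassification}, use strictness to force $v_L(\sigma(\alpha)-\alpha)=\lambda$, descend via Lemma~\ref{Ldiskoiddescent}, and then extract conditions (1)--(6) from Mac Lane theory together with Proposition~\ref{Pramprop}. The obstacle you anticipate in condition~(4) is precisely where the paper invokes \cite[Corollary 4.30]{Ru:mc} to compute $\deg(g_i)/\deg(g_{i-1})=\lcm(p^{e_1},\ldots,p^{e_{i-1}})/\lcm(p^{e_1},\ldots,p^{e_{i-2}})$ and finish by induction.
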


\proof
Let $e = \log_p([L:K])$.  By Lemma \ref{Lpreclassification}, the valuation $v$ we seek is $1/p^e$ times the restriction of $w = [v_0,\, v_1(x-\alpha) = \lambda]$ to $K[x]$, where $w$ is the valuation on $L[x]$ from Lemma \ref{Lpreclassification}.  So we need only show that $v$ has the properties (i)--(vi) of the theorem.  First, we make some observations about $\alpha$ and $\lambda$. Let $\mc{Y}$ be as in Lemma \ref{Lpreclassification}.  Since the action of $G := \Gal(L/K)$ is faithful on the special fiber of $\mc{Y}$, we must have $v_L(\sigma(y) - y) = \lambda$ for all nontrivial $\sigma \in G$ and all $y \in L$ with $v_L(y - \alpha) = \lambda$.  In particular, this is true whenever $y$ is Galois conjugate to $\alpha$, so we are in the situation of Lemma \ref{Ldiskoiddescent}.  Thus we can write 
$v = [v_0,\, v_1(g_1) = \lambda_1, \ldots, v_{n-1}(g_{n-1}(x)) = \lambda_{n-1},\ v_n(g_n(x)) = c_n]$, where the $g_i$ are key polynomials of strictly increasing degree, where $g_n$ is the minimal polynomial for $\alpha$, and where $c_n = \lambda$.  This proves (v) and that $c_n$ is an integer.  For $1 \leq i \leq n-1$, write $\lambda_i = c_i/p^{e_i}$ where $c_i$ has no $p$-part.  The $c_i$ are integers since the value group of $v$ is $(1/p^e)\ints$. 

Let $r = v_L(\alpha)$.  Then $r = v_K(N(\alpha)) = v_K(g_n(0))$. By Lemma \ref{Lpreclassification} we may assume that $0 < r = w(\alpha) < c_n = w(x - \alpha) < p^e$.  Since $w(\alpha) < w(x - \alpha)$, we have $w(x) = w(\alpha)$, and consequently $v(x^{p^e}) = v(N(\alpha))$.  In other words, $v(x) = r/p^e < 1$.  Since $x$ is a key polynomial over $v_0$ and since setting $v(x) = r/p^e$ uniquely determines the evaluation of $v$ on any linear polynomial, we can set $g_1(x) = x$ and $\lambda_1 = r/p^e$.  This proves (ii), as well as the assertion in (i) about $c_1$ and the fact that $r = c_1p^{e-e_1}$ in (vi).  In particular, this finishes the proof of (i).

We also note that if $p \nmid r$, then Proposition \ref{Pramprop}(i) shows that $L/K$ has a unique ramification jump $s$ and that $\lambda = c_n = r+s$.  If $p \mid r$, then Proposition \ref{Pramprop}(ii) shows that $\lambda = c_n > r + s$, where $s$ is the largest lower ramification jump for $L/K$.  Since $p \nmid r$ exactly when $e = e_1$, this finishes the proof of (vi).

Since $k$ is algebraically closed, \cite[Corollary 4.30]{Ru:mc}\footnote{\cite[Corollary 4.30]{Ru:mc} is incorrect as stated --- $e(v_m|v_{m-1})$ should be $e(v_{m-1}|v_{m-2})$.} shows that for $i \geq 2$, we have $$\deg(g_i)/\deg(g_{i-1}) = \lcm(p^{e_1}, \ldots, p^{e_{i-1}})/\lcm(p^{e_1}, \ldots, p^{e_{i-2}}),$$ when we set $e_0 = 0$.  Parts (iii) and (iv), except for the statement that $e_{n-1} = e$, then follow from this statement and induction.  By part (i) and the fact that the $e_i$ are increasing from (iii), the value group of $v$ is $(1/p^{e_{n-1}})\ints$, so $e_{n-1} = e$.  This finishes the proof of (iii).
\Endproof

\begin{cor}\label{Cclassification}
Every weak wild arithmetic $\ints/p$-quotient singularity of an arithmetic surface over $K$ is formally isomorphic to the unique singularity of a model $\mc{X}$ of $\proj^1_K$ with
irreducible special fiber corresponding to a valuation of the form
$$[v_0,\, v_1(x) = r/p,\, v_2(g_2(x)) = r + s]$$
where $0 < r < p$ and $g_2(x)$ is an irreducible degree $p$ polynomial, any of whose roots generates a $\ints/p$-extension $L/K$ with ramification jump $s$ that faithfully resolves the singularity.  
\end{cor}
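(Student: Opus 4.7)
The plan is to specialize Theorem \ref{Tclassification} directly to the case $|G|=p$. Since the singularity is faithfully resolved by a $\ints/p$-Galois extension $L/K$, we immediately have $e := \log_p([L:K]) = 1$.

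Applying Theorem \ref{Tclassification} produces a valuation
$$v = [v_0, v_1(g_1(x)) = c_1/p^{e_1}, \ldots, v_n(g_n(x)) = c_n]$$
satisfying conditions (i)--(vi) of that theorem. The first step is to unwind condition (iii): the chain $0 < e_1 < \cdots < e_{n-1} = 1$ is a strictly increasing chain of positive integers ending at $1$, which forces $n - 1 = 1$ (hence $n = 2$) and $e_1 = 1$.

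Substituting $n = 2$ and $e_1 = 1$ into the remaining conditions, condition (ii) gives $g_1(x) = x$, condition (i) gives $0 < c_1 < p$ with $p \nmid c_1$, and condition (iv) gives $\deg g_2 = p^{e_1} = p$. Condition (v) identifies $g_2$ as an irreducible polynomial over $K$ each of whose roots generates $L/K$. Finally, the equality case of condition (vi), which holds precisely when $n = 2$, yields $c_2 = r + s$ where $r = c_1 \cdot p^{e - e_1} = c_1$ and $s$ is the unique ramification jump of $L/K$. Renaming $c_1$ as $r$, the valuation takes the claimed form $[v_0, v_1(x) = r/p, v_2(g_2(x)) = r+s]$ with $0 < r < p$.

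Since the corollary is essentially a direct specialization, there is no significant obstacle. The main care needed is in correctly collapsing the chain of exponents from condition (iii) when $e = 1$, and in invoking the equality case of condition (vi) to identify $c_2$ explicitly as $r + s$ with $s$ the unique ramification jump.
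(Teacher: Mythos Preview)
Your proof is correct and follows exactly the paper's approach: the paper simply notes that Theorem~\ref{Tclassification}(iii) forces $n=2$ when $[L:K]=p$, and you have spelled out in detail how the remaining conditions then specialize to give the stated form of the valuation.
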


\proof
This follows from Theorem \ref{Tclassification}, since for a strict $\ints/p$-quotient singularity, part (iii) of the theorem implies that $n = 2$.
\Endproof

\begin{defn}\label{Drs}
We call a weak wild arithmetic $\ints/p$-quotient singularity with invariants $(r, s)$ as in Corollary \ref{Cclassification} a \emph{weak wild singularity of type $(r,s)$}.
\end{defn}

\begin{rem}\label{Rrsexists}
If $0 < r < p$ and $s$ is the ramification jump of some $\ints/p$-extension $L/K$, then there exists a weak wild singularity of type $(r, s)$ over $K$.  Namely, we just take $g_2(x)$ to be the minimal polynomial of a generator $\alpha$ of $L/K$ such that $v_L(\alpha) = r$, and then the model $\mc{X}$ of $\proj^1_K$ with irreducible special fiber corresponding to $[v_0, v_1(x) = r/p, v_2(g_2(x)) = r+s]$ contains the desired singularity.  By Proposition \ref{Pnotdivisbyp}, we have $p \nmid s$.  If $K$ has characteristic $p$, then it is well-known that this is the only restriction on $s$.  If $K$ has characteristic zero and absolute ramification index $e_K$, then $s \leq pe_K/(p-1)$ is the only other restriction (see, e.g., \cite[Lemma 3.3(ii)]{Ob:cw}).
\end{rem}

\begin{rem}
  If $\mc{X}$ is the model of $\mathbb{P}^1_K$ associated to $r,s,g_2$ as in Corollary \ref{Cclassification}, but we only require $s$ to be less than or equal to the ramification jump of $L/K$, then we still get an arithmetic quotient singularity $x\in\mc{X}$ which is resolved by $L/K$. If $s$ is strictly less then the ramification jump then the singularity is not strict (see also Remark \ref{Rstrict}). However, we will see later (Corollary \ref{Cresolution}) that the resolution of $x$ does not depend on this condition, but only on the pair $(r,s)$.   
\end{rem}

\begin{rem}\label{Rwhichinvariants}
In fact, we will see in \S\ref{Sresolution} that the resolution graph of a weak wild arithmetic quotient singularity as in Theorem \ref{Tclassification} depends only on $n$, the $c_i$, and the $e_i$.  It would be interesting to know exactly which $n$, $c_i$ and $e_i$ can give rise to such a singularity for a given field $K$ and group $(\ints/p)^e$. This would allow us to determine all possible resolution graphs.  Remark \ref{Rrsexists} gives the answer when $e = 1$.
\end{rem}

\section{Resolution of weak wild singularities}\label{Sresolution}

\subsection{General resolution results}\label{Sgeneral}
We begin by giving general results about the locations of singularities on models of $\proj^1_K$ with irreducible special fiber.  

\begin{lem}\label{LallareA1}
Let $\mc{X}$ be a normal model of $\proj^1_K$ and let $\ol{V}$ be an irreducible component of the special
fiber $\ol{X}$ of $\mc{X}$ with the reduced subscheme structure.  Then $\ol{V} \cong \proj^1_k$.
\end{lem}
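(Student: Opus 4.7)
The plan is to use Mac Lane's correspondence (\cite[Proposition 3.4]{Ru:mc}) between components of a normal model of $\proj^1_K$ and geometric valuations of $K(x)$. Let $v$ be the geometric valuation of $K(x)$ corresponding to the component $\ol{V}$. By applying an automorphism of $\proj^1_K$ coming from $\PGL_2(K)$ (which does not change $\ol{V}$ up to isomorphism), I may assume $v\succeq v_0$ for some Gauss valuation $v_0$; then by \cite[Theorem 4.31]{Ru:mc}, $v$ is an inductive valuation of the form $[v_0,\,v_1(g_1)=\lambda_1,\ldots,v_n(g_n)=\lambda_n]$.

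My next step is to identify the function field $k(\ol{V})$, which equals the residue field $\kappa(v)$, with $k(T)$ for some transcendental $T$. I would proceed by induction on $n$. For $n=0$, $v=v_0$ is the Gauss valuation and $\kappa(v_0)=k(\bar{x})\cong k(T)$. For the inductive step, Mac Lane's structure theory (e.g.\ \cite[Corollary 4.30]{Ru:mc} and the description of residue rings of augmented valuations) describes $\kappa(v_i)$ as a rational function field over $\kappa(v_{i-1})$, up to a possible finite algebraic extension of constant fields; because $k$ is algebraically closed and the transcendence degree of $\kappa(v)$ over $k$ equals $1$, no such finite extension can appear, and the residue field remains of the form $k(T)$.

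At this point $\ol{V}$ is a reduced, integral, proper $k$-curve whose function field is rational, so its normalization is $\proj^1_k$. To upgrade this to a scheme-theoretic isomorphism $\ol{V}\cong\proj^1_k$, I would examine the standard affine charts of $\mc{X}$ produced by the Mac Lane presentation of $v$ near each closed point of $\ol{V}$, and check directly that the reduced special fiber in each chart is smooth. The main obstacle will be this last step: having the correct function field is immediate from valuation theory, but ruling out rational singular points on $\ol{V}$ (such as cusps or nodes arising where $\mc{X}$ itself is singular) requires an explicit local computation on the Mac Lane charts rather than a purely valuation-theoretic argument.
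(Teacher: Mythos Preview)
Your approach via Mac Lane valuations is genuinely different from the paper's, and the obstacle you flag at the end is a real gap that you have not closed.

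The paper's proof is short and purely cohomological, making no use of inductive valuations.  From the surjection $\mc{O}_{\ol{X}}\twoheadrightarrow \iota_*\mc{O}_{\ol{V}}$ and the fact that $\ol{X}$ has dimension $1$, one gets a surjection $H^1(\ol{X},\mc{O}_{\ol{X}})\twoheadrightarrow H^1(\ol{V},\mc{O}_{\ol{V}})$.  Flatness of $\mc{X}\to\Spec\mc{O}_K$ together with $H^1(\proj^1_K,\mc{O})=0$ forces $H^1(\ol{X},\mc{O}_{\ol{X}})=0$, hence $H^1(\ol{V},\mc{O}_{\ol{V}})=0$.  Thus $\ol{V}$ is an integral proper curve over the algebraically closed field $k$ with \emph{arithmetic} genus $0$, and any such curve is isomorphic to $\proj^1_k$: a singular point would contribute strictly positively to $p_a$.

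Your argument, by contrast, establishes only that the \emph{geometric} genus is $0$ (the function field is rational).  Upgrading this to arithmetic genus $0$---equivalently, excluding cusps or nodes on $\ol{V}$---is precisely what is missing.  The proposed remedy of inspecting ``standard affine charts'' produced by the Mac Lane presentation is not clearly workable: the model $\mc{X}$ is determined by a finite \emph{set} of valuations, and at a point where $\mc{X}$ itself is singular (e.g.\ where several components meet, or the specialization of $x=\infty$) there is no canonical chart in which smoothness of the reduced component is transparent.  Carrying this out would require an explicit description of the local ring of $\mc{X}$ at every such point for arbitrary depth-$n$ inductive valuations, which is neither supplied in \cite{Ru:mc} nor easy to produce.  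Note also that you cannot appeal to Lemma~\ref{Lmultiplicitydeterminesregularity}(ii) here, since that lemma invokes the present one.  The cohomological route sidesteps all of this by controlling $p_a(\ol{V})$ directly, which yields rationality and smoothness simultaneously.
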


\proof
  Let $\iota: \ol{V} \to \ol{X}$ be the inclusion.  Restriction gives a natural surjection $\mc{O}_{\ol{X}} \to \iota_*(\mc{O}_{\ol{V}})$ of sheaves on $\ol{X}$.  Since $\ol{X}$ has dimension 1, this gives rise to a surjection $H^1(X, \mc{O}_{\ol{X}}) \to H^1(X, \iota_*(\mc{O}_{\ol{V}})) \cong H^1(\ol{V}, \mc{O}_{\ol{V}})$.  Since $\mc X \to \Spec \mc{O}_K$ is flat, $H^1(\ol{X}, \mc{O}_{\ol{X}}) = 0$, and thus the same is true for $\ol{V}$.  Since $\ol{V}$ is an integral curve with arithmetic genus $0$, it must be $\proj^1_k$.

\Endproof

The first part of the next lemma is well known, but we include a proof for completeness.

\begin{lem}\label{Lmultiplicitydeterminesregularity}
Let $\mc{X}$ be a normal flat $\mc{O}_K$-curve with generic fiber $X$ and special fiber $\ol{X}$, let $\xb$ be a point of $\ol{X}$, and let $\mc{X}' = \Spec \mc{O}_{\mc{X}, \xb} \subseteq \mc{X}$.

\begin{enumerate}[(i)]
\item If $\xb$ lies on an irreducible component $\ol{V}$ of $\ol{X}$ with multiplicity $m$, and if there is closed point of $X$ specializing to $\xb$ with residue field $L$ where $[L:K] < m$, then $\xb$ is not a regular point of $\mc{X}$.
\item Assume $X \cong \proj^1_K$.  Let $\ol{V}_1, \ldots, \ol{V}_r$ be
  the irreducible components of $\ol{X}$ containing $\xb$, and let
  $D_1, \ldots, D_r$ be the prime divisors supporting $\ol{V}_1,
  \ldots, \ol{V}_r$.  If any $D_i$ is principal when restricted to $\mc{X}'$, then $\xb$ is a regular point of $\mc{X}$.
\end{enumerate}
\end{lem}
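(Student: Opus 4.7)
My plan for part (i) is a contradiction argument based on a divisibility computation in the local ring. Assume $\mc{X}$ is regular at $\xb$, so $A := \mc{O}_{\mc{X}, \xb}$ is a two-dimensional regular local ring, hence a UFD. Let $\mathfrak{p} \subset A$ be the height-one prime ideal corresponding to the closure $H$ in $\mc{X}$ of the given closed point of $X$, and let $\mathfrak{p}_1, \dots, \mathfrak{p}_r$ be the height-one primes corresponding to the irreducible components $\ol{V}_1, \dots, \ol{V}_r$ of $\ol{X}$ passing through $\xb$, labeled so that $\ol{V}_1 = \ol{V}$ and $m_1 = m$. Since $A$ is a UFD, each of these primes is principal, say $\mathfrak{p} = (g)$ and $\mathfrak{p}_i = (f_i)$, and one has a factorization $\pi_K = u\, f_1^{m_1}\cdots f_r^{m_r}$ in $A$ (with $u \in A^\times$), because the $(f_i)$-adic order of $\pi_K$ in the UFD $A$ agrees with the multiplicity of $\ol{V}_i$ in the scheme-theoretic special fiber.

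Next, the normalization of $H$ is $\Spec \mc{O}_L$ (using that $L/K$ is totally ramified, since $k$ is algebraically closed), and the local inclusion $A/\mathfrak{p} \hookrightarrow \mc{O}_L$ sends the maximal ideal of $A/\mathfrak{p}$ into $\pi_L \mc{O}_L$. Therefore each image $\bar f_i$ lies in $\pi_L \mc{O}_L$, so $v_L(\bar f_i) \geq 1$. Applying $v_L$ to the image of the factorization of $\pi_K$ in $\mc{O}_L$ yields $[L:K] = v_L(\pi_K) = \sum_i m_i\, v_L(\bar f_i) \geq \sum_i m_i \geq m$, contradicting the hypothesis $[L:K] < m$.

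For part (ii), I plan to exhibit a regular system of parameters for $A$ directly. If $\mathfrak{p}_1$ is principal, say $\mathfrak{p}_1 = (f_1)$, then $A/(f_1)$ is the local ring at $\xb$ of the reduced irreducible component $D_1 = \ol{V}_1^{\mathrm{red}}$. By Lemma \ref{LallareA1}, $\ol{V}_1 \cong \proj^1_k$ is smooth, so $A/(f_1)$ is a discrete valuation ring. Choosing a uniformizer $\bar\pi$ of $A/(f_1)$ and lifting it to $\tilde\pi \in A$, the image of $\mathfrak{m}_A$ in $A/(f_1)$ is exactly $(\bar\pi)$, which forces $\mathfrak{m}_A = (\tilde\pi, f_1)$. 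Since $A$ has Krull dimension two, having $\mathfrak{m}_A$ generated by two elements is the definition of regularity.

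The main subtlety I anticipate is in part (i): justifying that the exponent $m_i$ appearing in the UFD factorization of $\pi_K$ in $A$ really equals the multiplicity of $\ol{V}_i$ in the special fiber, which follows from interpreting multiplicity as the length of the structure sheaf of the special fiber at the generic point of $\ol{V}_i$. Once this is granted the valuation computation is immediate, and part (ii) is straightforward once one recognizes that the quotient of $A$ by a local equation of a smooth component must be a DVR.
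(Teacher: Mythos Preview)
Your proof is correct and follows essentially the same approach as the paper's. For part (i) the paper argues slightly more economically, using only the single component $\ol{V}$ and the containment $(t)^m \supseteq (\pi_K)$ rather than the full factorization of $\pi_K$, but the idea---pass to the quotient by the principal prime of the $L$-point and read off a contradiction from the valuation on $\mc{O}_L$---is identical; part (ii) is the same argument spelled out in more detail.
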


\proof
Since regularity is a local property, we may replace $\mc{X}$ with $\mc{X}'$.\\

Part (i): Assume, for a contradiction, that $\mc{X}'$ is regular.
Then the height $1$ prime divisor supporting the closure of the given
$L$-point is principal.  If $w$ generates the corresponding prime
ideal, then $\mc{O}_{\mc{X}, \xb}/(w) \cong S$, where $S \subseteq
\mc{O}_L$ satisfies $\Frac(S) = L$.  The prime divisor $D$ supporting
the restriction of $\ol{V}$ to $\mc{X}'$ is also height $1$, and thus
principal, say corresponding to an ideal $(t)$.  Since $\ol{V}$ has
multiplicity $m$, we have $(t)^m \supseteq (\pi_K)$.  This containment
becomes an equality after taking quotients by $(w)$.  Thus, the ideal $(\pi_K)$ has an
$m$th root in $S \subseteq \mc{O}_L$.  Since $[L:K] < m$, this is a contradiction.\\

Part (ii):  If $i$ is as in the proposition, then
Lemma \ref{LallareA1} (applied to $\mc{X}$) shows that $D_i$ is smooth, so in particular $\xb$ is smooth on $D_i$.  Since $D_i$ is principal, this means that $\xb$ is regular in $\mc{X}'$.

\Endproof

\begin{lem}\label{Lirredsingularity}
Let $\mc{X}$ be the model of $\proj^1_K$ with irreducible special
fiber corresponding to the inductive valuation
$$v = [v_0, v_1(g_1(x)) = \lambda_1, \ldots, v_n(g_n(x)) = \lambda_n].$$
\begin{enumerate}[(i)]
\item There is a singularity at the specialization of $g_n(x) = 0$ if and only
if $\lambda_n$ does not lie in the additive group generated by $1, \lambda_1,
\ldots, \lambda_{n-1}$.  
\item There is a singularity at the specialization of
$x = \infty$ unless $n=1$ and $\lambda_1$ is an integer, in which case there is no singularity.
\item There is no singularity at any other point.
\end{enumerate}
\end{lem}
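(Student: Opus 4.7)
The strategy is to apply Lemma \ref{Lmultiplicitydeterminesregularity}. The multiplicity of $\Vb$ in the special fiber is $N := \lcm(d_1, \ldots, d_n)$, with $\lambda_i = c_i/d_i$ in lowest terms (Lemma \ref{Lmultiplicity}(ii)); set $e_n := [\Gamma_{v_n} : \Gamma_{v_{n-1}}]$, where $\Gamma_{v_i}$ denotes the value group of $v_i$, so that $e_n > 1$ if and only if $\lambda_n \notin \ints + \ints\lambda_1 + \cdots + \ints\lambda_{n-1}$. For the forward direction of (i), when $e_n > 1$ one has $N = e_n \cdot \deg g_n > \deg g_n$ by \cite[Corollary 4.30]{Ru:mc}, so a root of $g_n$ provides a closed point of $X$ with residue field of degree $\deg g_n < N$ specializing to the claimed singular point; Lemma \ref{Lmultiplicitydeterminesregularity}(i) then forces a singularity. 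For the singular case of (ii), unless $n = 1$ and $\lambda_1 \in \ints$ we have $N > 1$ (by Lemma \ref{Lmultiplicity}(v) when $n \geq 2$, and directly when $n = 1$ and $d_1 > 1$), and since $x = \infty$ has residue field of degree $1 < N$, the same lemma applies.

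For the regularity assertions, the exceptional case of (ii) is immediate from Lemma \ref{Lmultiplicity}(iv). For (iii) and the converse direction of (i), I would use Lemma \ref{Lmultiplicitydeterminesregularity}(ii) by exhibiting a rational function $t \in K(x)$ with $v(t) = 1/N$ whose horizontal divisor does not contain $\xb$; then $(t)$ principally generates the ideal of $\Vb$ locally at $\xb$. Take $t$ to be a monomial $g_1^{a_1} \cdots g_n^{a_n} \pi_K^b$ with $\sum a_i \lambda_i + b = 1/N$, which exists by B\'ezout since $1/N$ generates $\Gamma_{v_n}$. The horizontal divisor of such $t$ is supported on the closures in $\mc{X}$ of $[g_i = 0]$ for those $i$ with $a_i \neq 0$ and of $[x = \infty]$. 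The key geometric claim is that these closures specialize on $\Vb$ only to the ``specialization of $g_n = 0$'' (for the term $i = n$) or to the ``specialization of $x = \infty$'' (for the terms $i < n$ together with $x = \infty$ itself); hence for any $\xb$ other than these two points, the horizontal divisor of $t$ avoids $\xb$, and $\xb$ is regular. For the converse direction of (i) with $e_n = 1$, the element $1/N$ already lies in $\Gamma_{v_{n-1}}$, so B\'ezout admits a solution with $a_n = 0$; the resulting $t$ has no horizontal component at the specialization of $g_n = 0$, and regularity there follows.

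The main obstacle is verifying the key geometric claim about specializations. This can be established using Mac Lane's graded-algebra theory by exhibiting the residue field of $v$ as $k(\bar{u})$ for a suitable transcendental generator $\bar{u}$---constructed as the reduction of $g_n^{e_n}/\Phi$, where $\Phi$ is a monomial in $g_1,\ldots,g_{n-1},\pi_K$ of valuation $e_n \lambda_n$---and then checking that the closure of $[g_n = 0]$ hits $\Vb$ at the unique point where $\bar{u} = 0$, while each closure of $[g_i = 0]$ for $i < n$ (choosing $\Phi$ so that $g_i$ appears in it with positive exponent) and the closure of $[x = \infty]$ hit $\Vb$ at $\bar{u} = \infty$, the latter because the total degree of $g_n^{e_n}$ exceeds that of $\Phi$. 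Once this specialization dictionary is in place, the B\'ezout construction of $t$ and the regularity verifications proceed routinely.
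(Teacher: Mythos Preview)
Your proposal is correct and follows essentially the same approach as the paper: apply Lemma~\ref{Lmultiplicitydeterminesregularity}(i) for the singular direction (degree of the specializing closed point is less than the multiplicity) and Lemma~\ref{Lmultiplicitydeterminesregularity}(ii) for regularity (exhibit a monomial $t = c\,g_1^{a_1}\cdots g_n^{a_n}$ with $v(t) = 1/N$ whose horizontal divisor avoids the point in question). The paper's proof simply asserts the specialization facts---that the zeros of $g_1,\ldots,g_{n-1}$ do not specialize to the same point as those of $g_n$, and that none of them specialize to a generic $\bar z$---whereas you go further and sketch a verification via the Mac Lane graded algebra and the transcendental generator $\bar u$; this extra care is reasonable but not strictly required at the level of detail the paper adopts.
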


\proof
Part (i): Let $N > 0$ be such that $(1/N) \ints$ is the additive group generated by $1, \lambda_1, \ldots, \lambda_{n-1}$.  Since $k$ is algebraically closed, repeated application of \cite[Corollary 4.30]{Ru:mc} shows that $N$ is the degree of $g_n$.  So $g_n(x) = 0$ is a point of degree $N$ on the generic fiber.  

If $\lambda_n \notin (1/N)\ints$, then the multiplicity of the special
fiber is greater than $N$, so the specialization $\zb$ of $g_n(x) = 0$ is
singular by Lemma \ref{Lmultiplicitydeterminesregularity}(i).  If
$\lambda_n \in (1/N)\ints$, then the multiplicity of the special fiber
is $N$.  Furthermore, one can construct a rational function $h = c
g_1(x)^{b_1} \cdots g_{n-1}(x)^{b_{n-1}}$ with $c \in K$ with $v(h) =
1/N$.  Since none of the zeroes of $g_1(x), \ldots, g_{n-1}(x)$ have
the same specialization as the zeroes of $g_n(x)$, the divisor of $h$
is locally (near $\zb$) equal to the
unique prime divisor of the special fiber to which $\zb$ specializes.
Lemma \ref{Lmultiplicitydeterminesregularity}(ii) implies that $\zb$ is regular.
\\

Part (ii): The point $x = \infty$ has degree $1$ over $K$.  If the
multiplicity of the special fiber of $\mc{X}$ is greater than $1$,
then Lemma \ref{Lmultiplicitydeterminesregularity}(i) shows that the
specialization of $x = \infty$ is singular.  By Lemma
\ref{Lmultiplicity}(ii) and (v), the multiplicity of the special fiber is
$1$ if and only if $n = 1$ and $\lambda_1$ is an integer.  By Lemma
\ref{Lmultiplicity}, the specialization of $x = \infty$ is regular.  \\

Part (iii):  Let $\zb$ be a point of the special fiber not covered in
parts (i) or (ii).  Let $N > 0$ be such that the additive group
generated by $1, \lambda_1, \ldots, \lambda_n$ is $(1/N)\ints$.  There
is a rational function $h = c g_1(x)^{b_1} \cdots g_n(x)^{b_n}$ such
that $v(h) = 1/N$.  Since none of the zeroes of $g_1(x), \ldots,
g_n(x)$ specialize to $\zb$, the divisor of $h$ is locally (near
$\zb$) the prime divisor corresponding to the special fiber.  
Lemma \ref{Lmultiplicitydeterminesregularity}(ii) shows that $\zb$ is regular in $\mc{X}$.
\Endproof

We now discuss when an intersection point of two irreducible
components of the special fiber of a model can be singular, in a
particular case necessary for us.

\begin{lem}\label{Lcrossingsingularity}
Let $\mc{X}$ be the model of $\proj^1_K$ whose special fiber has two intersecting
irreducible components corresponding to 
$$v = [v_0,\, v_1(g_1(x)) = \lambda_1, \ldots, v_{n-1}(g_{n-1}(x)) = \lambda_{n-1},\, v_n(g_n(x)) =
\lambda_n]$$ and  
$$w = [v_0, v_1(g_1(x)) = \lambda_1, \ldots, v_{n-1}(g_{n-1}(x)) = \lambda_{n-1},\, v_n(g_n(x)) = \lambda_n'],$$
where $\lambda_n' > \lambda_n$.  There is a singularity at the intersection point
of these two irreducible components if and only if
$$\lambda_n' - \lambda_n > \frac{N}{\lcm(N, c)\lcm(N, c')},$$ where 
\begin{itemize}
\item $\lambda_n$ (resp.\ $\lambda_n'$) = $b/c$ (resp.\ $b'/c'$) in lowest terms, 
\item $N > 0$ and $1/N$ generates the additive group generated by $1, \lambda_1, \ldots, \lambda_{n-1}$.
\end{itemize}
\end{lem}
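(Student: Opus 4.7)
The plan is to use Lemma \ref{Lmultiplicitydeterminesregularity}(ii), together with the converse for normal surface singularities (a regular two-dimensional local ring is a UFD, so every height-one prime is principal), to reduce the regularity question at $\xb = V\cap W$ on $\mc{X}$ to whether one of the prime divisors $D_V$, $D_W$ is principal in the local ring $A := \mc{O}_{\mc{X},\xb}$. A generator of $\mathfrak{p}_V$ must be an element $h \in K(x)^{\times}$ with $v^*(h) = 1$, $w^*(h) = 0$ (where $v^* := m_V v$ and $w^* := m_W w$ are renormalized to value group $\ints$), whose divisor on $\mc{X}$ has no horizontal component passing through $\xb$.

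The natural candidates are monomial rational functions $h = c\,\pi_K^{a_0}\,g_1(x)^{a_1}\cdots g_n(x)^{a_n}$ with $c\in K^{\times}$ and $a_i\in\ints$. Because $v$ and $w$ share the first $n-1$ augmentations and differ only in the value on $g_n$, and because the roots of each $g_i$ specialize to points in branches of the valuation tree distinct from $\xb$ --- in particular, the roots of $g_n$ lie in $D_w\subsetneq D_v$ and specialize to the ``deeper'' point $\zb_W\in W$, whereas $\xb$ corresponds on $W$ to the direction pointing back toward $V$ --- such monomials have only vertical contribution to the divisor at $\xb$, namely $v^*(h)D_V + w^*(h)D_W$. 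Moreover, any polynomial $p\in K[x]$ satisfies $v(p)\le w(p)$ by the min formula in the $g_n$-adic expansion (using $\lambda_n<\lambda_n'$), so no polynomial alone can be a generator; one shows further that any admissible rational function with the required valuations must come from the monomial lattice.

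The key calculation is the index in $\ints^2$ of the monomial lattice $\Lambda := \{(v^*(h), w^*(h)) : h\text{ a monomial as above}\}$. The vectors from $\pi_K$ and from $g_i$ with $i < n$ all lie in $\ints\cdot e_0$ for $e_0 := (m_V/N, m_W/N)$, because $\{1,\lambda_1,\ldots,\lambda_{n-1}\}$ generate $(1/N)\ints$ and a gcd computation shows their $\ints$-span equals $\ints e_0$; the vector $e_1 := (m_V\lambda_n, m_W\lambda_n')$ coming from $g_n$ is linearly independent, so $\Lambda = \ints e_0 + \ints e_1$, of index
\[
\left|\det\begin{pmatrix} m_V/N & m_V\lambda_n \\ m_W/N & m_W\lambda_n' \end{pmatrix}\right| = \frac{m_V m_W(\lambda_n'-\lambda_n)}{N} = \frac{\lcm(N,c)\lcm(N,c')(\lambda_n'-\lambda_n)}{N}
\]
in $\ints^2$. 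This index is a positive integer, so $\lambda_n'-\lambda_n\ge N/(m_V m_W)$ always, with equality iff $\Lambda = \ints^2$ iff $\xb$ is regular. Singularity therefore corresponds to the strict inequality $\lambda_n'-\lambda_n > N/(m_V m_W)$, which is the condition in the lemma.

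The main obstacle is the reduction to monomial candidates in the second paragraph: if the monomial lattice fails to contain $(1,0)$ or $(0,1)$, one must rule out the possibility that some non-monomial rational function nevertheless generates $\mathfrak{p}_V$ or $\mathfrak{p}_W$ near $\xb$. This requires analyzing the rigid-analytic tube at $\xb$ to identify the horizontal primes through it, and verifying that their contributions to non-monomial divisors cannot be cancelled in any admissible expression for a generator.
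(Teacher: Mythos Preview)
Your lattice computation is correct and, for the regularity direction (equality case), it is exactly the paper's argument repackaged: the paper also produces a monomial in $\pi_K, g_1,\ldots,g_n$ whose $(v^*,w^*)$-values are $(1,0)$ and then invokes Lemma~\ref{Lmultiplicitydeterminesregularity}(ii). Your observation that the determinant is always a positive integer, hence $\lambda_n'-\lambda_n \ge N/(\lcm(N,c)\lcm(N,c'))$, also matches the paper's remark that the strict reverse inequality is impossible.

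The gap you identify in the singularity direction is genuine, and the paper does \emph{not} close it the way you are trying to. Ruling out non-monomial generators of $\mathfrak{p}_V$ or $\mathfrak{p}_W$ directly amounts to showing that the completed local ring at $\xb$ is a specific toric/cyclic-quotient ring whose class group has order equal to your lattice index; this is plausible but requires real work (essentially computing $\hat{\mc O}_{\mc X,\xb}$). The paper sidesteps this entirely with an indirect argument: when $\lambda_n'-\lambda_n$ strictly exceeds the bound, it inserts the intermediate valuations along a shortest $N$-path from $\lambda_n'$ to $\lambda_n$ (Definition~\ref{Dshortestpath}) to obtain a model $\mc X'$ blowing down to $\mc X$, and then uses Lemma~\ref{Lcantblowdown} together with \eqref{eq:self_intersection2} to show that none of the new components of $\mc X'$ is a $(-1)$-curve. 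If $\xb$ were regular, Castelnuovo would let us contract $(-1)$-curves one by one to recover $\mc X$, a contradiction. So instead of analyzing which rational functions can generate the height-one primes, the paper leverages the global structure of minimal resolutions; this is both shorter and avoids the delicate analysis of horizontal divisors through $\xb$ that your approach would require.
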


\proof 
Let $y = \alpha g_1(x)^{a_1} \cdots g_{n-1}(x)^{a_{n-1}}$ be such that
$v(y) = w(y) = 1/N$, with $\alpha \in K$ and the $a_i \in \ints$.  Write $m = \gcd(N, c)$ and 
$m' = \gcd(N, c')$.  Note that $m/Nc = 1/\lcm(N, c)$ generates the
value group of $v$.
Let 
$$h = y^{b'N/m'}/g_n^{c'/m}.$$
Then $$(v(h), w(h)) = ((b'c - bc')/m'c, 0) = ((\lambda_n' - \lambda_n)c'/m', 0).$$ 

If $\lambda_n' - \lambda_n = N/\lcm(N, c)\lcm(N, c') = mm'/Ncc'$, then
the ordered pair above is 
$(1/\lcm(N, c), 0)$. 
 In other words, $h$ cuts out the principal prime divisor $D$ locally
 corresponding to the valuation $v$ near the intersection point.
 Lemma \ref{Lmultiplicitydeterminesregularity}(ii) shows that the intersection point is regular.

Note that it is not possible to have $\lambda_n' - \lambda_n <
N/\lcm(N, c)\lcm(N, c')$, because $v(h)$ would be too small to lie in the value group of $v$.  

If $\lambda_n' - \lambda_n > N/\lcm(N, c)\lcm(N, c')$, then take a shortest $N$-path $\lambda_n' = \beta_0/\gamma_0 > \beta_1/\gamma_1 > \cdots > \beta_r/\gamma_r = \lambda_n$, where the $\beta_r/\gamma_r$ are in lowest terms.  Note that $r \geq 2$.  Consider the normal model $\mc{X}'$ of $\proj^1_K$ whose special fiber has $r+1$ irreducible components $\ol{X}_i$ corresponding to 
$$[v_0,\, v_1(g_1(x)) = \lambda_1, \ldots, v_{n-1}(g_{n-1}(x)) = \lambda_{n-1}, v_n(g_n(x)) = \beta_i/\gamma_i]$$ for $0 \leq i \leq r$.  The model $\mc{X}'$ is a blow up of $\mc{X}$.  We claim that $\mc{X}'$ has no $-1$-components outside of the strict transforms $\ol{X}_0$ and $\ol{X}_r$ of the components of the special fiber of $\mc{X}$.  The claim shows that the intersection point of the two irreducible components of the special fiber of $\mc{X}$ is not regular, because if it were, we would be able to blow down $-1$-components of the special fiber of $\mc{X}'$ one by one, eventually obtaining $\mc{X}$.

To prove the claim, we observe that for $0 < i < r$, the multiplicity $\mu_i$ of $\ol{X}_i$ is $\lcm(N, \gamma_i)$.  By Lemma \ref{Lcantblowdown}, either $\mu_{i-1} \geq \mu_i$ or $\mu_{i+1} \geq \mu_i$.  But, by \eqref{eq:self_intersection2}, the self-intersection of $\ol{X}_i$ is $-1$ if and only if $\mu_i = \mu_{i-1} + \mu_{i+1}$.  Since this is impossible, the claim is proved.
\Endproof

The above characterizations of singularities yield two corollaries
about their explicit resolutions.  These corollaries, and the rest of \S\ref{Sresolution}, depend on the concept of \emph{shortest $N$-path} (Definition \ref{Dshortestpath}).

\begin{cor}\label{Cirredresolution}
Let $\mc{X}$ be the model of $\proj^1_K$ from Lemma
\ref{Lirredsingularity}, and let $N$ be such that the additive group
generated by $1, \lambda_1, \ldots, \lambda_{n-1}$ is $(1/N)\ints$. If $\mc{X}$ has a singularity at the
specialization of $g_n(x) = 0$, 
the minimal resolution of this singularity is the normal model $\mc{X}'$ of
$\proj^1_K$ whose special fiber has irreducible components corresponding to 
$$v_{\lambda} := [v_0,\, v_1(g_1(x)) = \lambda_1, \ldots, v_{n-1}(g_{n-1}(x)) = \lambda_{n-1},\, v_n(g_n(x)) =
\lambda]$$ as $\lambda$ runs through a shortest $N$-path from $\lambda_n'$
to $\lambda_n$, where $\lambda_n'$ is the least rational number
greater than $\lambda_n$ that is in $(1/N)\ints$.
\end{cor}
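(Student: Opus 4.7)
My strategy is to verify that $\mc{X}'$ dominates $\mc{X}$, is regular, and contains no $(-1)$-curves among the exceptional components. Each $\lambda$ on the $N$-path satisfies $\lambda \geq \lambda_n$, so $v_\lambda \succeq v$, and $\mc{X}'$ is a proper birational modification of $\mc{X}$; since $v = v_{\lambda_n}$ is itself one of the defining valuations of $\mc{X}'$, the map $\mc{X}'\to\mc{X}$ is an isomorphism away from the singular point $\zb$ (the specialization of $g_n(x) = 0$), which by Lemma~\ref{Lirredsingularity} is the unique non-regular point of $\mc{X}$.

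For regularity of $\mc{X}'$ I would examine only points lying over $\zb$. An interior point of a single new component is regular by Lemma~\ref{Lirredsingularity}(iii) applied locally. The specialization of $g_n(x) = 0$ now lies on the top component $v_{\lambda_n'}$, and since $\lambda_n' \in (1/N)\ints$ by construction, Lemma~\ref{Lirredsingularity}(i) gives regularity there. At the intersection of two successive path entries $\lambda > \lambda'$ (written in lowest terms as $b/c$ and $b'/c'$), the defining property of a shortest $N$-path supplies the equality $\lambda - \lambda' = N/(\lcm(N,c)\lcm(N,c'))$, so Lemma~\ref{Lcrossingsingularity} yields regularity at that intersection.

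Minimality amounts to showing that no new component is a $(-1)$-curve. Using \eqref{eq:self_intersection2}, an interior new component is a $(-1)$-curve iff $\mu_i = \mu_{i-1}+\mu_{i+1}$ for its multiplicity $\mu_i = \lcm(N, c_i)$, and the endpoint $v_{\lambda_n'}$ of multiplicity $N$ is a $(-1)$-curve iff its unique chain neighbor also has multiplicity $N$. The latter would force the neighbor's denominator to divide $N$, contradicting the minimality of $\lambda_n'$ among elements of $(1/N)\ints$ strictly above $\lambda_n$. The former is ruled out by the combinatorial property of shortest $N$-paths from the appendix, exactly as in the closing argument of the proof of Lemma~\ref{Lcrossingsingularity}. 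The main obstacle is invoking this combinatorial property cleanly from the appendix on $N$-paths; the remainder of the proof is bookkeeping with Lemmas~\ref{Lirredsingularity} and~\ref{Lcrossingsingularity}.
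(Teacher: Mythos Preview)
Your regularity argument is essentially the paper's: Lemma~\ref{Lirredsingularity} handles the smooth points of each component and the specialization of $g_n(x)=0$ on the top component, and Lemma~\ref{Lcrossingsingularity} together with the defining equality of an $N$-path handles the crossings. One small misstatement: the point $\zb$ is \emph{not} in general the unique non-regular point of $\mc{X}$, since by Lemma~\ref{Lirredsingularity}(ii) the specialization of $x=\infty$ is typically singular too. This is harmless, though, because you only need that $\mc{X}'\to\mc{X}$ is an isomorphism away from $\zb$, which is what ``minimal resolution at $\zb$'' requires.

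Your minimality argument is correct but takes a different route from the paper. The paper does not invoke \eqref{eq:self_intersection2} or Lemma~\ref{Lcantblowdown} here; instead it reuses the two regularity lemmas directly. Blowing down an interior exceptional component removes an entry from the shortest $N$-path, and by definition of \emph{shortest} the resulting gap violates the equality in Lemma~\ref{Lcrossingsingularity}, so a singularity reappears at the new crossing. Blowing down the top component $\ol{X}_{\lambda_n'}$ moves the specialization of $g_n(x)=0$ onto a component whose $\lambda$ is not in $(1/N)\ints$ (the only entry of the shortest $N$-path lying in $(1/N)\ints$ is $\lambda_n'$ itself), and Lemma~\ref{Lirredsingularity}(i) then forces a singularity there. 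Your $(-1)$-curve argument via multiplicities is equally valid and is exactly how the paper handles the analogous step inside the proof of Lemma~\ref{Lcrossingsingularity}; the paper's version here is just a bit more economical because it avoids importing the intersection formula and the appendix lemma at this stage.
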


\proof
By Lemma \ref{Lirredsingularity}, any singularities on the exceptional divisor of $\mc{X}'$
can only appear on intersections of two irreducible components (in
particular, there is no singularity at the specialization of $g_n(x) =
0$ to the irreducible component $\ol{X}_{\lambda'_n}$ corresponding to $\lambda_n'$). By
Lemma \ref{Lcrossingsingularity} and Definition \ref{Dshortestpath},
there are in fact no singularities at these intersection points, but
there will be if any component of the exceptional
divisor other than $\ol{X}_{\lambda'_n}$ is blown down.  Furthermore, our shortest $N$-path does not contain any entries in $(1/N)\ints$
other than $\lambda_n'$, so Lemma \ref{Lirredsingularity} shows that blowing down $\ol{X}_{\lambda'_n}$ also
yields a singularity.  Thus $\mc{X}'$ is the minimal resolution. 
\Endproof

\begin{cor}\label{Ccrossingresolution}
Let $\mc{X}$ be the model of $\proj^1_K$ from Lemma
\ref{Lcrossingsingularity}, and let $N$ be as in Lemma
\ref{Lcrossingsingularity}. 
Let $\mc{X}'$ be the normal model of $\proj^1_K$ whose special fiber has irreducible components
corresponding to $$v_{\lambda} := [v_0,\, v_1(g_1(x)) = \lambda_1, \ldots, v_{n-1}(g_{n-1}(x)) = \lambda_{n-1},\, v_n(g_n(x)) =
\lambda]$$ as $\lambda$ runs through a shortest $N$-path from $\lambda_n'$ to $\lambda_n$.
If $\mc{X}$ has a singularity at the intersection
point of the two irreducible components on the special fiber, then $\mc{X}'$ is a (and by unicity, the)
minimal resolution of this singularity.
\end{cor}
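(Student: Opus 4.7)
The plan is to follow the same two-step strategy as in the proof of Corollary \ref{Cirredresolution}: first show that $\mc{X}'$ is regular above the crossing singularity of $\mc{X}$, then show that $\mc{X}'$ is minimal by ruling out $(-1)$-curves in the exceptional fiber.

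For regularity, the exceptional fiber above the crossing singularity consists of the intermediate components $v_{\lambda_i}$ for $0 < i < r$, together with the intersection points where these meet the strict transforms of $v_{\lambda_0}$ and $v_{\lambda_r}$. The specializations of $g_n(x) = 0$ and $x = \infty$ remain on $v_{\lambda_0}$ and $v_{\lambda_r}$ away from the exceptional locus and so are irrelevant to the resolution being analyzed. At a closed point of some $v_{\lambda_i}$ that is not an intersection point with an adjacent component, the special fiber is locally irreducible, and I would argue regularity by adapting Lemma \ref{Lirredsingularity}(iii): a monomial $h = c \prod_{j < n} g_j(x)^{b_j}$ with $v_{\lambda_i}(h) = 1/\mu_i$ has a divisor that locally cuts out $v_{\lambda_i}$ as a principal prime divisor (the zeros of $g_j$ for $j < n$ specialize to components disjoint from this neighborhood), so Lemma \ref{Lmultiplicitydeterminesregularity}(ii) applies. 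At an intersection point of consecutive components $v_{\lambda_{i-1}}$ and $v_{\lambda_i}$, the defining property of a shortest $N$-path gives the equality $\beta_{i-1}/\gamma_{i-1} - \beta_i/\gamma_i = N/(\lcm(N,\gamma_{i-1})\lcm(N,\gamma_i))$, which is precisely the threshold at which Lemma \ref{Lcrossingsingularity} yields regularity.

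For minimality, I must rule out any intermediate $v_{\lambda_i}$ being a $(-1)$-curve. By Lemma \ref{Lmultiplicity}(ii), its multiplicity is $\mu_i = \lcm(N,\gamma_i)$, and by \eqref{eq:self_intersection2} a $(-1)$-self-intersection would force $\mu_i = \mu_{i-1} + \mu_{i+1}$. The closing paragraph of the proof of Lemma \ref{Lcrossingsingularity} shows that exactly this is impossible, via Lemma \ref{Lcantblowdown}: along a shortest $N$-path each intermediate multiplicity is bounded above by one of its two neighbors. The parenthetical uniqueness assertion is then the standard uniqueness of the minimal regular resolution at a singular point.

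The main (minor) obstacle is the multi-component adaptation of Lemma \ref{Lirredsingularity}(iii); however, once one notes that the special fiber of $\mc{X}'$ is locally irreducible at any closed point of an intermediate component other than the two crossings with its neighbors, the original single-component argument transfers with only cosmetic changes.
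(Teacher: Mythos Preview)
Your proposal is correct and follows essentially the same route as the paper: regularity away from crossings via Lemma~\ref{Lirredsingularity}, regularity at crossings via the defining equality of an $N$-path together with Lemma~\ref{Lcrossingsingularity}, and minimality by excluding intermediate $(-1)$-curves. The only cosmetic difference is in the minimality step: the paper argues directly that blowing down any intermediate component leaves a gap violating the $N$-path condition and hence, by Lemma~\ref{Lcrossingsingularity}, creates a singularity, whereas you rule out $(-1)$-curves via Lemma~\ref{Lcantblowdown} and \eqref{eq:self_intersection2} (which is exactly the auxiliary claim inside the proof of Lemma~\ref{Lcrossingsingularity}); both arguments are equivalent and short.
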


\proof
Again, by Lemma \ref{Lirredsingularity}, any singularities on the exceptional divisor of $\mc{X}'$
can only appear on intersections of two irreducible components.  But Lemma \ref{Lcrossingsingularity} and Definition
\ref{Dshortestpath} show that these points are not singular, and
furthermore that blowing down any irreducible component of the
exceptional divisor yields a singularity.
\Endproof

\begin{rem}\label{Runiquepath}
Corollary \ref{Ccrossingresolution} shows that if an $N$-path from $\lambda_n'$ to $\lambda_n$ exists, it is unique.  In fact, one always exists, see Proposition \ref{Puniquepathexists}.
\end{rem}

\subsection{Weak wild quotient singularities}\label{Sweakwildcase}

From Theorem \ref{Tclassification}, we know that any strict weak wild 
arithmetic quotient singularity over $K$ appears in a normal model of $\proj^1_K$.  We give here a general resolution of singularities of any such model with irreducible special fiber.

\begin{thm}\label{Tresolution}
Let $\mc{X}$ be the normal model of $\proj^1_K$ with irreducible special fiber corresponding to a valuation of the form
$$v = [v_0,\, v_1(g_1(x)) = \lambda_1, \ldots, v_{n-1}(g_{n-1}(x)) = \lambda_{n-1},\, v_n(g_n(x)) = \lambda_n].$$  Write $\lambda_i = c_i/d_i$ in lowest terms for $1 \leq i \leq n$, and write $N_i = \lcm(d_1, \ldots, d_i)$.  Set $\lambda_0 = \lfloor \lambda_1 \rfloor$ and set $N_0 = N_{-1} = 1$.  The minimal regular resolution of $\mc{X}$ is the normal model $\mc{X}'$ of $\proj^1_K$ whose special fiber has irreducible components corresponding to the following valuations:
\begin{itemize}
\item For each $1 \leq i \leq n$, the valuation $v_i$ given as part of the inductive valuation $v$.
\item The valuation $\tilde{v}_0 := [v_0, v_1(x) = \lambda_0]$ (note: $\tilde{v}_0 = v_0$ if $0 < \lambda_1 < 1$).
\item For each $1 \leq i \leq n$, the valuations
$$v_{i,\lambda} = [v_0,\, v_1(g_1(x)) = \lambda_1, \ldots, v_{i-1}(g_{i-1}(x)) = \lambda_{i-1},\, v_{i,\lambda}(g_i(x)) = \lambda],$$ as $\lambda$ ranges through the shortest $N_{i-1}$-path from $\alpha_i$ to $\lambda_i$, where $\alpha_i$ is the least rational number greater than $\lambda_i$ in $(1/N_{i-1})\ints$.
\item For each $0 \leq i \leq n-1$, the valuations
$$w_{i, \lambda} = [v_0,\, v_1(g_1(x)) = \lambda_1, \ldots, v_i(g_i(x)) = \lambda_i,\ v_{i+1, \lambda}(g_{i+1}(x)) = \lambda],$$
as $\lambda$ ranges through the shortest $N_i$-path from $\lambda_{i+1}$ to $(N_i/N_{i-1})\lambda_i$.
\end{itemize}
The dual graph of the exceptional fibers of the minimal resolution $\mc{X}'\to\mc{X}$ is shown in Figure \ref{fig:resolutiongraph}. 
\end{thm}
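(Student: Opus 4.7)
The strategy is to verify directly that the explicitly constructed model $\mc{X}'$ satisfies the three defining properties of the minimal regular resolution of $\mc{X}$: (i) $\mc{X}' \to \mc{X}$ is a birational modification, (ii) $\mc{X}'$ is regular, and (iii) no irreducible component of the exceptional divisor of $\mc{X}' \to \mc{X}$ is a $-1$-curve. Property (i) is immediate since $v = v_n$ appears among the defining valuations of $\mc{X}'$, so the strict transform of the special fiber of $\mc{X}$ is the $v_n$-component of $\mc{X}'$ and the remaining components make up the exceptional divisor.

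For property (ii), I would check every candidate non-regular point of the special fiber of $\mc{X}'$. By Lemma \ref{LallareA1} each component is isomorphic to $\proj^1_k$, so the candidate singular points fall into two classes: specializations of $g_j = 0$ on components whose associated valuation is an augmentation involving $g_j$, and intersection points of adjacent components. The first class is handled by Lemma \ref{Lirredsingularity}(i) together with Corollary \ref{Cirredresolution}: at the endpoint $\lambda = \alpha_i \in (1/N_{i-1})\ints$ of each $v_{i,\lambda}$ chain, $\alpha_i$ lies in the additive group generated by $1, \lambda_1, \dots, \lambda_{i-1}$, which is precisely the regularity criterion. The second class is handled by Lemma \ref{Lcrossingsingularity}: regularity at the intersection of two adjacent valuations $v_{\lambda}$ and $v_{\lambda'}$ sharing the same truncation holds exactly when $|\lambda - \lambda'|$ equals the critical gap of that lemma, which along any shortest $N_i$-path is precisely the defining condition of Definition \ref{Dshortestpath} (see also Corollaries \ref{Cirredresolution} and \ref{Ccrossingresolution}). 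At the ``junction'' intersections, where a chain meets a component $v_i$ or $\tilde{v}_0$, one uses the identifications $v_i = v_{i,\lambda_i} = w_{i,(N_i/N_{i-1})\lambda_i}$ together with the equality $v_{i-1}(g_i) = (N_{i-1}/N_{i-2})\lambda_{i-1}$ from \cite[Corollary 4.30]{Ru:mc} to verify that the gap condition holds at these endpoints as well. Finally, Lemma \ref{Lmultiplicity}(iv) shows $\tilde{v}_0$ has no singularity at $x = \infty$ since $\tilde{v}_0 = [v_0, v_1(x) = \lambda_0]$ with $\lambda_0 \in \ints$.

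For property (iii), I would apply formula \eqref{eq:self_intersection2} to each component of the exceptional divisor. For components interior to any $v_{i,\lambda}$ or $w_{i,\lambda}$ chain, Lemma \ref{Lcantblowdown} in the appendix directly excludes the $-1$-curve identity $m_i = \sum_Z m_Z$, since consecutive multiplicities in a shortest $N$-path never satisfy that identity. For the junction components $v_i$ (with $1 \le i \le n-1$) and $\tilde{v}_0$, Lemma \ref{Lmultiplicity}(ii) gives the multiplicity $N_i$ of $v_i$, and one rules out the $-1$-curve identity by a direct multiplicity comparison with its (up to three) neighbors from the attached chains.

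The hard part will be the combinatorial bookkeeping at the junction components $v_i$, where the $v_{i,\lambda}$ chain, the incoming $w_{i-1,\lambda}$ chain, and the outgoing $w_{i,\lambda}$ chain all meet, and where one must identify the precise neighbouring valuations in each chain and their multiplicities in order to simultaneously verify regularity of the intersections and fail the $-1$-curve identity on $v_i$ itself. Several edge cases require separate treatment: $n = 1$ (so there are no $w_{i,\lambda}$ chains for $i \ge 1$); $\lambda_1 \in \ints$, where the $w_{0,\lambda}$ chain may be trivial and $\tilde{v}_0$ may coincide with $v_1$; and shortest $N_i$-paths that reduce to a single step. Existence of the shortest paths used throughout is guaranteed by Proposition \ref{Puniquepathexists} in the appendix.
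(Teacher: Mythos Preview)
Your approach is essentially the paper's, relying on the same key ingredients (Lemmas \ref{Lirredsingularity}, \ref{Lcrossingsingularity}, \ref{Lcantblowdown}, Corollaries \ref{Cirredresolution}, \ref{Ccrossingresolution}, and formula \eqref{eq:self_intersection2}). The one organizational difference is that the paper first introduces an intermediate model $\mc{X}''$ whose special fiber has components exactly $\tilde{v}_0, v_1, \ldots, v_n$, and shows via Corollaries \ref{Cirredresolution} and \ref{Ccrossingresolution} that $\mc{X}'$ is the \emph{minimal} regular resolution of $\mc{X}''$. This buys a cleaner minimality step: once $\mc{X}'$ is minimal over $\mc{X}''$, no component of the exceptional divisor of $\mc{X}' \to \mc{X}''$ can be a $-1$-curve, so only the strict transforms of the $\mc{X}''$-components $\tilde{v}_0, v_1, \ldots, v_{n-1}$ remain to be checked. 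Your direct approach works too, but your case enumeration for (iii) omits the terminal vertices $v_{i,\alpha_i}$ of the $v_{i,\lambda}$-chains, which are neither interior to a chain nor junction components; you should note that such a vertex has multiplicity $N_{i-1}$ while its unique neighbor has strictly larger multiplicity (since no other entry of the shortest $N_{i-1}$-path lies in $(1/N_{i-1})\ints$), hence is not a $-1$-curve.
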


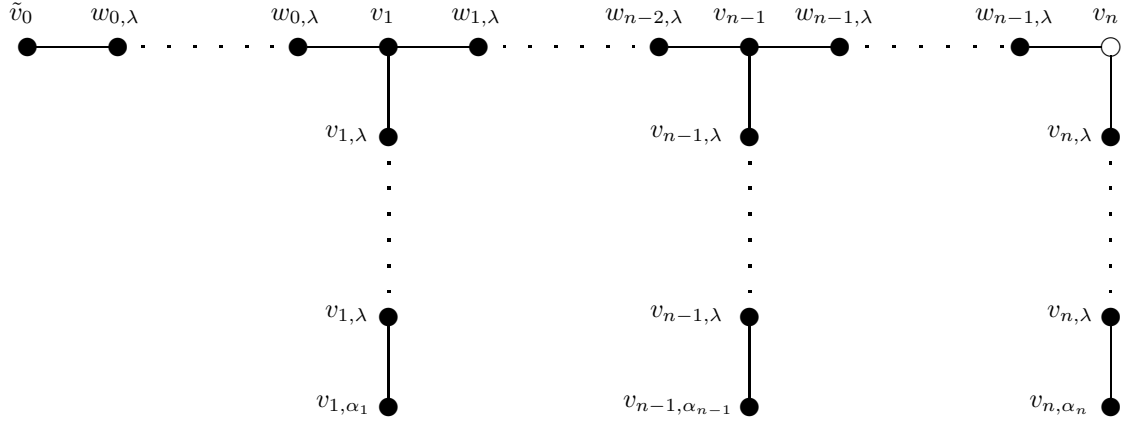
\begin{figure}\label{resolutiondualgraph}
\begin{center}
  \setlength{\unitlength}{1.2mm}
\begin{picture}(140,50)

\put(10,45){\circle*{2}}
\put(8,48){$\tilde{v}_0$}
\put(20,45){\circle*{2}}
\put(17,48){$w_{0,\lambda}$}
\put(40,45){\circle*{2}}
\put(37,48){$w_{0,\lambda}$}
\put(50,45){\circle*{2}}
\put(48,48){$v_1$}
\put(60,45){\circle*{2}}
\put(57,48){$w_{1,\lambda}$}
\put(80,45){\circle*{2}}
\put(74,48){$w_{n-2,\lambda}$}
\put(90,45){\circle*{2}}
\put(86,48){$v_{n-1}$}
\put(130,45){\circle{2}}
\put(128,48){$v_n$}
\put(100,45){\circle*{2}}
\put(95,48){$w_{n-1,\lambda}$}
\put(120,45){\circle*{2}}
\put(115,48){$w_{n-1,\lambda}$}

\put(50,35){\circle*{2}}
\put(43,35){$v_{1,\lambda}$}
\put(50,15){\circle*{2}}
\put(43,15){$v_{1,\lambda}$}
\put(50,5){\circle*{2}}
\put(42,5){$v_{1,\alpha_1}$}
\put(90,35){\circle*{2}}
\put(79,35){$v_{n-1,\lambda}$}
\put(90,15){\circle*{2}}
\put(79,15){$v_{n-1,\lambda}$}
\put(90,5){\circle*{2}}
\put(76,5){$v_{n-1,\alpha_{n-1}}$}
\put(130,35){\circle*{2}}
\put(123,35){$v_{n,\lambda}$}
\put(130,15){\circle*{2}}
\put(123,15){$v_{n,\lambda}$}
\put(130,5){\circle*{2}}
\put(121,5){$v_{n,\alpha_{n}}$}

\put(11,45){\line(1,0){8}}
\put(41,45){\line(1,0){8}}
\put(51,45){\line(1,0){8}}
\put(81,45){\line(1,0){8}}
\put(91,45){\line(1,0){8}}
\put(121,45){\line(1,0){8}}
\put(50,45){\line(0,-1){10}}
\put(90,45){\line(0,-1){10}}
\put(50,15){\line(0,-1){10}}
\put(90,15){\line(0,-1){10}}
\put(130,15){\line(0,-1){10}}
\put(130,44){\line(0,-1){9}}

\linethickness{1pt}
\dottedline{3}(20,45)(40,45)
\dottedline{3}(60,45)(80,45)
\dottedline{3}(100,45)(118,45)
\dottedline{3}(50,35)(50,15)
\dottedline{3}(90,35)(90,15)
\dottedline{3}(130,35)(130,15)

\end{picture}
\end{center}
\caption{The dual graph of the minimal resolution of a normal model of $\proj^1_K$ with irreducible special fiber.  The white vertex corresponds to the strict transform of the special fiber, while the black vertices correspond to components of the exceptional fibers.  The intersection graph of a weak wild quotient singularity corresponds to the complement of the vertices in the right-most vertical column.  The extended intersection graph of the desingularization includes the white vertex as well.} \label{fig:resolutiongraph}
\end{figure}

\proof We first show that $\mc{X}'$ is regular.  Consider the normal model $\mc{X}''$ of $\proj^1_K$ whose special fiber has irreducible components corresponding to the valuations $\tilde{v}_0, v_1, \ldots, v_n$.  Let $\mc{X}_i$ be the blow down of $\mc{X}''$ given by blowing down all irreducible components on the special fiber other than the one corresponding to $v_i$ (or $\tilde{v}_0$, if $i =0$).  By applying Lemma \ref{Lirredsingularity} to each of the $\mc{X}_i$, we see that any singularities of $\mc{X}''$ must lie at the intersection point $\ol{z}_i$ of the two irreducible components of the special fiber corresponding to $v_i$ (or $\tilde{v}_0$ if $i=0$) and $v_{i+1}$ for some $i$, or must lie at the strict transform $\ol{y}_i$ of the specialization of $g_i(x) = 0$ to $\mc{X}_i$ for some $i$.  

By Corollary \ref{Cirredresolution}, the minimal resolution of the singularity at $\ol{y}_i$ on $\mc{X}''$ is the model of $\proj^1_K$ whose special fiber has irreducible components corresponding to $\tilde{v}_0, v_1, \ldots, v_n$, as well as the $v_{i, \lambda}$.  So resolving all of the $\ol{y}_i$ minimally yields a model $\mc{X}'''$ whose special fiber has irreducible components corresponding to $\tilde{v}_0$, the $v_i$ and the $v_{i, \lambda}$.  Write $\ol{z}_i$ again for the strict transform of $\ol{z}_i$ on $\mc{X}'''$.

Let us resolve all of the $\ol{z}_i$.  We note, since $g_{i+1}(x)$ is a key polynomial over $v_i$ (or $\tilde{v}_0$ if $i=0$ and $v_i(g_i(x)) = \lambda_i$, that $v_i(g_{i+1}(x)) = (\deg(g_{i+1})/\deg(g_i)) \lambda_i$ by \cite[Proposition 4.19(iii)]{Ru:mc}.  By \cite[Corollary 4.30]{Ru:mc} (but see the footnote in the proof of Theorem \ref{Tclassification}), $\deg(g_{i+1})/\deg(g_i) = N_i/N_{i-1}$.  
So the valuation $v_i$ can also be written as
$$[v_0, v_1(g_1(x)) = \lambda_1, \ldots, v_i(g_i(x)) = \lambda_i, v_{i+1}(g_{i+1}(x)) = (N_i/N_{i-1})\lambda_i].$$  By Corollary \ref{Ccrossingresolution}, the minimal resolution of the singularities at the $\ol{z}_i$ on $\mc{X}'''$ is the given model $\mc{X}'$, whose special fiber has irreducible components corresponding to $\tilde{v}_0$, the $v_i$, the $v_{i, \lambda}$, and the $w_{i, \lambda}$.  Thus $\mc{X}'$ is regular, and furthermore, it is a minimal regular resolution of $\mc{X}''$.  

It remains to show that $\mc{X}'$ is a minimal regular resolution of $\mc{X}$.  To do this, it suffices to show that the special fiber of $\mc{X}'$ has no $-1$-curves.  Since $\mc{X}'$ is a minimal resolution of $\mc{X}''$, it suffices to check that no strict transform of an irreducible component of the special fiber of $\mc{X}''$ (other than the one corresponding to $v_n$) in $\mc{X}'$ is a $-1$-curve.  Such a component corresponds to $\tilde{v}_0$ or a valuation $v_i$, with $1 \leq i < n$.  By \eqref{eq:self_intersection2}, a component has self-intersection $-1$ if and only if its multiplicity is equal to the sum of the multiplicities of its neighboring components.  

First, suppose $i \geq 1$.  The multiplicity of the irreducible component $\ol{V}$ of the special fiber corresponding to $v_i$ is $N_i$, and the multiplicity of the irreducible component $\ol{W}$ corresponding to $w_{i, \lambda}$ and intersecting $\ol{V}$ is divisible by $N_i$, as can be read off directly from $w_{i, \lambda}$. Since $\ol{W}$ is not the only component of the special fiber of $\mc{X}'$ intersecting $\ol{V}$, it is not possible for $\ol{V}$ to be a $-1$-curve.

Lastly, if $i = 0$, then the multiplicity of the irreducible component $\ol{V}$ corresponding to $\tilde{v}_0$ is $1$, and the multiplicity of the unique neighboring component $w_{0, \lambda}$ is strictly greater than $1$, since $\lambda_0 < \lambda \leq \lambda_1 < \lambda_0 + 1$.  This completes the proof.
\Endproof

\begin{rem}
The special fiber of the resolution $\mc{X}'$ above has simple normal crossings, so $\mc{X}'$ is also the minimal snc-resolution.
\end{rem}

We can now specialize Theorem \ref{Tresolution} to the case of a weak wild arithmetic quotient singularity, which by Theorem \ref{Tclassification}, is isomorphic to the
unique 
singularity of the normal model $\mc{X}$ of $\proj^1_K$ with 
irreducible special fiber corresponding to a valuation of the form in 
(\ref{Evaluation}), where $n \geq 2$ in (\ref{Evaluation}).  By Lemma 
\ref{Lirredsingularity}, the singularity occurs at the specialization 
of $x = \infty$.  

\begin{cor}\label{Cwwsingres}
The dual graph of the minimal resolution of a weak wild quotient singularity is as in Theorem \ref{Tresolution} (Figure \ref{resolutiondualgraph}), except that there are no components corresponding to the $v_{n, \lambda}$. 
\end{cor}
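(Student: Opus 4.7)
The approach is to combine Theorem \ref{Tclassification} with Theorem \ref{Tresolution}. First I will use Theorem \ref{Tclassification} to realize the given weak wild quotient singularity, up to formal isomorphism, as the unique singularity on an explicit model $\mc{X}$ of $\proj^1_K$ whose irreducible special fiber corresponds to the inductive valuation $v$ of equation (\ref{Evaluation}); the defining properties give $n \geq 2$ and $\lambda_n = c_n$ a positive integer. Applying Lemma \ref{Lirredsingularity} then pins down the singular locus of $\mc{X}$: because $\lambda_n \in \ints \subseteq (1/N_{n-1})\ints$, part (i) rules out a singularity at the specialization of $g_n(x) = 0$; because $n \geq 2$, part (ii) produces the singularity at the specialization of $x = \infty$; and part (iii) excludes all other points. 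Hence the minimal regular resolution of $\mc{X}$ supplied by Theorem \ref{Tresolution} is the same as the minimal regular resolution of the weak wild singularity itself.

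Next I will read off the exceptional configuration from Theorem \ref{Tresolution}. Following the proof of that theorem, the family $v_{n, \lambda}$ is introduced (via Corollary \ref{Cirredresolution}) only to resolve the putative singularity at $\ol{y}_n$, the specialization of $g_n(x) = 0$ on $\mc{X}_n = \mc{X}$; since the previous step has already shown that this point is regular, no $v_{n, \lambda}$ components are added. In contrast, for each $1 \leq i < n$ part (iii) of Theorem \ref{Tclassification} yields $e_i > e_{i-1}$, so $\lambda_i = c_i/p^{e_i} \notin (1/p^{e_{i-1}})\ints = (1/N_{i-1})\ints$, and the singularities at the $\ol{y}_i$ genuinely exist and are resolved by the $v_{i, \lambda}$ families; the $w_{i, \lambda}$ families are likewise needed for the intersection points $\ol{z}_i$. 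The remaining components are precisely those of Figure \ref{resolutiondualgraph} with the rightmost column of $v_{n, \lambda}$ vertices omitted.

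The only real subtlety, which I expect to be the main point requiring care, is verifying that omitting the $v_{n, \lambda}$ family does not accidentally turn some other component into a $-1$-curve that would have to be contracted. This follows directly from the multiplicity and neighbor-sum analysis at the end of Theorem \ref{Tresolution}'s proof: the only components scrutinized there are $\tilde{v}_0$ and $v_i$ for $i \leq n-1$, whose neighborhoods in the resolution are unaffected by the presence or absence of the $v_{n, \lambda}$ branch, while $v_n$ itself is excluded by convention as the strict transform of the original special fiber. Combining these observations delivers exactly the claimed dual graph.
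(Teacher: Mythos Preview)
Your proof is correct and follows essentially the same approach as the paper's, though you provide considerably more detail. The paper's proof simply notes that, by Theorem \ref{Tclassification}, the singularity lies at the specialization of $x=\infty$, so the $v_{n,\lambda}$ components (which lie above the specialization of $g_n(x)=0$) are not part of its exceptional divisor; the parenthetical ``in fact, it is not hard to see that there won't even be any $v_{n,\lambda}$ components in the minimal resolution of $\mc{X}$'' is exactly the argument you spell out using Lemma \ref{Lirredsingularity}(i) and the integrality of $\lambda_n = c_n$. Your careful check that omitting the $v_{n,\lambda}$ branch creates no new $-1$-curves is a welcome addition that the paper leaves implicit.
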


\proof
In light of Theorem \ref{Tclassification}, there is essentially nothing to show.  Since the singularity lies at the specialization of $x = \infty$ on $\mc{X}$, we include only the exceptional divisor of the resolution of this singularity, and thus not the components corresponding to the $v_{n, \lambda}$ (in fact, it is not hard to see that there won't even be any $v_{n, \lambda}$ components in the minimal resolution of $\mc{X}$). 
\Endproof

Recall from Definition \ref{Drs} that a singularity of type $(r,s)$ is a singularity isomorphic to the unique singularity of the model of $\proj^1_K$ with irreducible special fiber corresponding to the valuation
$$v = [v_0, v_1(x) = r/p, v_2(g(x)) = r + s],$$ where $g(x)$ is an irreducible polynomial of degree $p$ giving rise to the associated $\ints/p$-extension $L/K$.  By Corollary \ref{Cclassification}, every weak wild strict arithmetic $\ints/p$-quotient singularity over $K$ has type $(r, s)$ for some $s > 0$ and $0 < r < p$, and $s$ is the ramification jump of $L/K$. 

\begin{defn} \label{def:r-s-graph}
  Let $p$ be a prime number and $r,s$ integers such that $0<r<p$ and $s>0$. Let
     $r/p = [a_0,\ldots,a_k]$
  be the negative continued fraction expansion of $r/p$, with convergents $b_i/c_i$, $i=0,\ldots,k$. Similarly, let
    $p/r = [\tilde{a}_0,\ldots,\tilde{a}_l]$
  be the negative continued fraction expansion of $p/r$, with convergents $\tilde{b}_i/\tilde{c}_i$, $i=0,\ldots,l$. Then the \emph{$(r,s)$-graph} is the extended arithmetic graph depicted in Figure \ref{fig:r-s-graph}, where by extended arithmetic graph we mean the a graph whose vertices are labeled with multiplicity and self-intersection numbers, and one vertex (the ``link'') is drawn in white to represent the strict transform of the component containing the singularity. The graph has a unique node of valency $3$, a unique link and exactly two terminal vertices. 
\end{defn}

\begin{rem} \label{rem:r-s-graph}
\begin{enumerate}[(i)]
\item
  The three vertices of the $(r,s)$-graph adjacent to the unique node have multiplicity $p$, $p-t$ and $t$, where $t$ is the unique integer such that $0<t<p$ and $tr\equiv 1\pmod{p}$. In fact, it follows from Proposition \ref{Pbasiccontfrac} that 
\[
      b_{k-1}p - rc_{k-1} = 1, \quad 0<c_{k-1}<p.
\]
Therefore, $c_{k-1}=p-t$. A similar argument shows that $\tilde{b}_{l-1}=t$. 
\item
  The $(r,s)$-graph is the arithmetic graph defined in \cite[Proposition 4.3]{Lo:wm} (with $t=r(C_1)$). 
\end{enumerate}  
\end{rem}

\begin{figure}
\begin{center}
  \setlength{\unitlength}{1mm}
\begin{picture}(100,40)

\put(20,20){\circle*{2}}
\put(40,20){\circle*{2}}
\put(50,20){\circle*{2}}
\put(60,30){\circle*{2}}
\put(80,30){\circle*{2}}
\put(90,30){\circle*{2}}
\put(60,10){\circle*{2}}
\put(80,10){\circle*{2}}
\put(90,10){\circle*{2}}

\put(10,20){\circle{2}}

\put(11,20){\line(1,0){8}}
\put(41,20){\line(1,0){8}}
\put(50,20){\line(1,1){10}}
\put(81,30){\line(1,0){8}}
\put(50,20){\line(1,-1){10}}
\put(80,10){\line(1,0){10}}

\linethickness{1pt}
\dottedline{3}(20,20)(40,20)
\dottedline{3}(60,10)(80,10)
\dottedline{3}(60,30)(80,30)

\put(9,23){$p$}
\put(19,23){$p$}
\put(39,23){$p$}
\put(49,23){$p$}

\put(58,33){$c_{k-1}$}
\put(78,33){$c_1$}
\put(89,33){$1$}
\put(58,13){$\tilde{b}_{l-1}$}
\put(78,13){$\tilde{b}_{0}$}
\put(89,13){$1$}

\put(16,14.5){$-2$}
\put(36,14.5){$-2$}
\put(46,14.5){$-2$}

\put(57,24.5){$-a_k$}
\put(77,24.5){$-a_2$}
\put(87,24.5){$-a_1$}

\put(57,4.5){$-\tilde{a}_l$}
\put(77,4.5){$-\tilde{a}_1$}
\put(87,4.5){$-\tilde{a}_0$}

\end{picture}

\end{center}
\caption{The $(r,s)$-graph, i.e., the extended intersection graph of a singularity of type $(r,s)$.  The labels above the vertices are the multiplicities and the labels below the vertices are the self-intersection numbers.  The total number of $-2$-vertices in the left-hand chain is $sp$.} \label{fig:r-s-graph}
\end{figure}
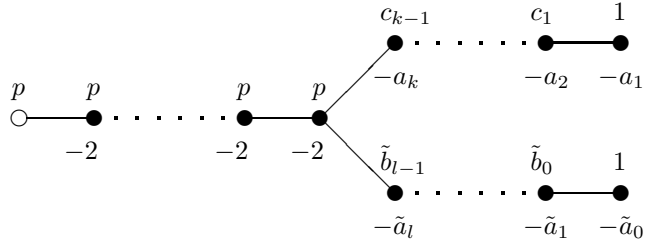

\begin{cor}\label{Cresolution}
\begin{enumerate}[(i)]
\item The resolution graph of the minimal resolution of a weak wild singularity of type $(r,s)$ is the $(r,s)$-graph depicted in Figure \ref{fig:r-s-graph}.
\item If such a singularity is realized in a model $\mc{X}$ of $\proj^1_K$ as in Corollary \ref{Cclassification}, then the valuations corresponding to the irreducible components of the special fiber of the minimal resolution are the Gauss valuation (which corresponds to one of the two terminal vertices of the $(r,s)$-graph), as well as the following valuations, written as inductive valuations:

\begin{itemize}
\item $[v_0,\, v_1(x) = r/p]$; this is the valuation corresponding to the unique node of the resolution graph. 
\item $[v_0,\, v_1(x) = \lambda],$ where either $\lambda$ is a convergent of the negative continued fraction expansion of $r/p$ or $1/\lambda$ is a convergent of the negative continued fraction expansion of $p/r$. In the first case, the corresponding component has multiplicity $c_{i-1}$ and self-intersection $-a_i$, where $1\leq i\leq k$. In the second case, it has multiplicity $\tilde{b}_{i-1}$ and self-intersection $-\tilde{a}_i$, where $0\leq i\leq l$ and $\tilde{b}_{-1}:=1$.
\item $[v_0,\, v_1(x) = r/p,\, v_2(g(x)) = \lambda]$, where $\lambda \in \frac{1}{p}\nats$ and $r < \lambda \leq r + s$. When $\lambda < r+s$, the valuation corresponds to a $-2$-component of the exceptional fiber of multiplicity $p$ and degree $2$.  When $\lambda=r+s$, the valuation corresponds to the strict transform of the special fiber of $\mc{X}$, which is the link of the resolution graph. 
\end{itemize} 
\end{enumerate}
\end{cor}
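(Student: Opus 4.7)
The plan is to specialize Theorem \ref{Tresolution} (equivalently Corollary \ref{Cwwsingres}) to the valuation
\[
 v = [v_0,\; v_1(x) = r/p,\; v_2(g(x)) = r+s]
\]
of Corollary \ref{Cclassification}. Here $n = 2$, $\lambda_1 = r/p$, $\lambda_2 = r+s$, so $d_1 = p$, $d_2 = 1$, $N_0 = 1$, $N_1 = N_2 = p$; and since $0 < r < p$ we have $\lambda_0 = 0$, which forces $\tilde v_0 = v_0$. Thus the Gauss valuation automatically appears as one of the components of the exceptional fibre.

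First I would read off from Theorem \ref{Tresolution} the three families of exceptional components. The family $\{w_{1,\lambda}\}$ corresponds to the shortest $p$-path from $r+s$ down to $r$; since every rational with denominator dividing $p$ that lies in $[r,r+s]$ belongs to $(1/p)\ints$, this path is just $r+s,\, r+s-1/p,\, \ldots,\, r+1/p,\, r$, and the valuations $[v_0,\, v_1(x)=r/p,\, v_2(g(x))=\lambda]$ with $r < \lambda \le r+s$ form the left chain, with the endpoint $\lambda = r+s$ equal to the link $v$ and the other endpoint $\lambda = r$ equal to the node $v_1$. By Lemma \ref{Lmultiplicity}(ii) each of these valuations has multiplicity $p$, and \eqref{eq:self_intersection2} then gives self-intersection $-2$, producing the $sp$ $-2$-vertices. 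The families $\{v_{1,\lambda}\}$ and $\{w_{0,\lambda}\}$ correspond to shortest $1$-paths from $1$ to $r/p$ and from $r/p$ to $0$, and form the upper and lower branches emanating from the node.

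The main step is to identify these two branches with the convergents described in Definition \ref{def:r-s-graph}. I would use the results of the appendix (the explicit description of shortest $N$-paths in terms of negative continued fractions) together with Proposition \ref{Pbasiccontfrac} to show that the shortest $1$-path from $1$ to $r/p$ is precisely the decreasing sequence of convergents $b_0/c_0 = 1,\, b_1/c_1,\, \ldots,\, b_k/c_k = r/p$ of the negative continued fraction expansion of $r/p$, and that the shortest $1$-path from $r/p$ to $0$ is the analogous decreasing sequence built from the reciprocals of the convergents of $p/r$, terminating at $0$, which is the Gauss valuation. The multiplicities $c_{k-1},\ldots,c_0 = 1$ on the upper chain and $\tilde b_{l-1},\ldots,\tilde b_0,\tilde b_{-1} = 1$ on the lower chain then follow directly from Lemma \ref{Lmultiplicity}(ii).

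Finally, I would compute the self-intersection numbers on the two branches by combining \eqref{eq:self_intersection2} with the three-term recurrence $c_{i-1} + c_{i+1} = a_{i+1}c_i$ for convergents of a negative continued fraction, so that a vertex of multiplicity $c_i$ with neighbours of multiplicities $c_{i-1}$ and $c_{i+1}$ has self-intersection $-a_{i+1}$; the lower chain is handled identically with tildes. At the node, the three neighbours have multiplicities $p$, $c_{k-1} = p-t$ and $\tilde b_{l-1} = t$ (using Remark \ref{rem:r-s-graph}(i)), whose sum $2p$ divided by the node's multiplicity $p$ gives self-intersection $-2$, consistent with Figure \ref{fig:r-s-graph}. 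Part (i) is then obtained by reading off the dual graph, yielding precisely the $(r,s)$-graph of Definition \ref{def:r-s-graph}. The main obstacle is the dictionary between shortest $N$-paths and negative continued fractions; once that is in hand, everything else reduces to bookkeeping and the recurrence in Proposition \ref{Pbasiccontfrac}.
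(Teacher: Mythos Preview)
Your proposal is correct and follows essentially the same route as the paper's proof: specialize Theorem~\ref{Tresolution} to $v=[v_0,\,v_1(x)=r/p,\,v_2(g(x))=r+s]$, identify the three families $\{v_{1,\lambda}\}$, $\{w_{0,\lambda}\}$, $\{w_{1,\lambda}\}$ with the shortest $1$-path from $1$ to $r/p$, the shortest $1$-path from $r/p$ to $0$, and the shortest $p$-path from $r+s$ to $r$ respectively, invoke Lemma~\ref{Lpathtonearestinteger} to translate the first two into convergents of $r/p$ and reciprocals of convergents of $p/r$, and then read off multiplicities via Lemma~\ref{Lmultiplicity}(ii) and self-intersections via \eqref{eq:self_intersection2} together with the recurrence of Proposition~\ref{Pbasiccontfrac}. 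The only difference is that you spell out the node computation and the explicit form of the $p$-path in slightly more detail than the paper does.
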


\proof
The valuations $v_0$, $[v_0,\, v_1(x) = r/p]$, and $[v_0,\, v_1(x) = r/p,\, v_2(g(x)) = r+s]$ are the valuations $v_0$, $v_1$, and $v_2$ from Theorem \ref{Tresolution}.  By Lemma \ref{Lpathtonearestinteger}(i), the convergents of the negative continued fraction expansion of $r/p$ form the shortest $1$-path from $1$ to $r/p$, and thus the valuations $[v_0,\, v_1(x) = \lambda]$ as $\lambda$ runs through these convergents are the valuations $v_{1,\lambda}$ from Theorem \ref{Tresolution}.  By Lemma \ref{Lpathtonearestinteger}(ii), the reciprocals of the convergents of the negative continued fraction expansion of $p/r$ form the shortest $1$-path from $r/p$ to $0$, and thus the valuations $[v_0,\, v_1(x) = \lambda]$ as $1/\lambda$ runs through these convergents are the valuations $v_{0, \lambda}$ from Theorem \ref{Tresolution}.  By definition, the values $\lambda \in (1/p)\ints$ between $r$ and $r+s$ form the shortest $p$-path from $r$ to $r+s$, so the valuations $[v_0,\, v_1(x) = r/p,\, v_2(g(x)) = \lambda]$ are the valuations $w_{1, \lambda}$ from Theorem \ref{Tresolution}.  We have shown that the valuations given in part (ii) are in one-to-one correspondence with the 
valuations from Theorem \ref{Tresolution}, which proves part (ii).

To prove part (i), it remains to compute the self-intersection numbers, which are determined by the intersection graph and the multiplicities.  The multiplicities are calculated using Lemma \ref{Lmultiplicity}(ii), and the self-intersection numbers are calculated using \eqref{eq:self_intersection2}. Using Proposition \ref{Pbasiccontfrac}, it is straightforward to verify that the numbers in Figure \ref{fig:r-s-graph} are correct. 
\Endproof

\begin{cor}\label{Ccontfrac}
The three irreducible components of the special fiber of the minimal
resolution of a singularity of type $(r,s)$ from Corollary \ref{Cresolution} that intersect the component corresponding to $[v_0,\, v_1 = r/p]$ have multiplicity $p$, $t$ and $p-t$ in the special fiber, where $t$ is the unique integer such that $0 < t < p$ and $tr \equiv 1 \pmod{p}$.
\end{cor}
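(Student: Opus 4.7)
The plan is to extract the three multiplicities in question directly from the explicit resolution data given in Corollary \ref{Cresolution}(ii) (and visualised in Figure \ref{fig:r-s-graph}), and then to do a short calculation with the convergents of the negative continued fractions using the determinant identity collected in Proposition \ref{Pbasiccontfrac}. In effect, this corollary is a repackaging of Remark \ref{rem:r-s-graph}(i), and the only real work is the continued fraction computation.

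First, I would identify the three components of the minimal resolution meeting the node $V$ corresponding to $[v_0,\,v_1(x)=r/p]$. By Corollary \ref{Cresolution}(ii), they are:
\begin{enumerate}[(a)]
\item the component $w$ corresponding to $[v_0,\,v_1(x)=r/p,\,v_2(g(x))=r+1/p]$, which is the first $-2$-curve of the chain leading to the link; the denominators appearing in the inductive expression are $p$ and $p$, so by Lemma \ref{Lmultiplicity}(ii) its multiplicity is $\lcm(p,p)=p$;
\item the component $[v_0,\,v_1(x)=b_{k-1}/c_{k-1}]$, where $b_{k-1}/c_{k-1}$ is the penultimate convergent of the negative continued fraction expansion $r/p=[a_0,\dots,a_k]$; Lemma \ref{Lmultiplicity}(ii) shows its multiplicity equals $c_{k-1}$;
\item the component $[v_0,\,v_1(x)=\tilde c_{l-1}/\tilde b_{l-1}]$, where $\tilde b_{l-1}/\tilde c_{l-1}$ is the penultimate convergent of $p/r=[\tilde a_0,\dots,\tilde a_l]$, whose multiplicity equals $\tilde b_{l-1}$.
\end{enumerate}

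Second, I would invoke the standard determinant identity for consecutive convergents of a negative continued fraction, which (after checking the sign convention) yields $b_i c_{i-1} - b_{i-1} c_i = -1$ for every $i\ge 0$. Applied to the last two convergents of $r/p=b_k/c_k$, taken in lowest terms so that $b_k=r$ and $c_k=p$, this becomes $b_{k-1} p - r c_{k-1} = 1$. Reducing modulo $p$ gives $r c_{k-1} \equiv -1 \pmod{p}$, and multiplying by $-t$ (where $t$ is defined by $tr\equiv 1\pmod p$ with $0<t<p$) yields $c_{k-1}\equiv -t \equiv p-t \pmod p$. Since $0<c_{k-1}<c_k=p$, we conclude $c_{k-1}=p-t$. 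An entirely analogous computation applied to $p/r$ (using $\tilde b_l=p$, $\tilde c_l=r$ in lowest terms) produces $\tilde b_{l-1}\, r - p\, \tilde c_{l-1}=1$, hence $\tilde b_{l-1}\equiv t\pmod p$, and again $0<\tilde b_{l-1}<\tilde b_l=p$ forces $\tilde b_{l-1}=t$.

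Combining the two steps, the three neighbours of $V$ have multiplicities $p$, $p-t$ and $t$, finishing the proof. I do not expect any genuine obstacle: once the explicit resolution from Corollary \ref{Cresolution} is in hand, the statement is purely arithmetical, and the only technical care needed is to track the sign in the determinant identity and the strict inequalities that allow one to promote the congruences modulo $p$ to equalities.
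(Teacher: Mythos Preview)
Your proposal is correct and follows exactly the paper's approach: the paper's proof simply invokes Remark~\ref{rem:r-s-graph} (which in turn uses the determinant identity from Proposition~\ref{Pbasiccontfrac} to compute $c_{k-1}=p-t$ and $\tilde{b}_{l-1}=t$), and you have spelled out precisely that computation, including the identification of the three neighbouring components via Corollary~\ref{Cresolution}(ii) and Lemma~\ref{Lmultiplicity}(ii). The only additional care you might mention is that the strict inequalities $0<c_{k-1}<p$ and $0<\tilde{b}_{l-1}<p$ come from Corollary~\ref{Cincreasingsequence}.
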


\proof This follows from Remark \ref{rem:r-s-graph}(ii).
\Endproof

\begin{rem}\label{Rlorenzini}
The resolution given in Corollary \ref{Cresolution} is consistent with
the resolution given in \cite[Theorem 6.8]{Lo:wm}.  Furthermore, our
$s$ corresponds to Lorenzini's $\alpha_i/p$ and our
$r$ corresponds to Lorenzini's $r_1(i)^{-1} \pmod{p}$.  In
particular, Corollary \ref{Cresolution} confirms Lorenzini's prediction
about $\alpha_i$ before \cite[Remark 1.1]{Lo:wm}.  Note that our $r$ and $s$ are completely independent from
one another, unlike in the case of a singularity arising from the product of two algebraic curves, as in \cite[Theorem 1.2]{Lo:wq}. 
\end{rem}

Corollary \ref{Cresolution} shows that the result of \cite[Theorem 6.4(b)]{Lo:wm} holds for individual weak wild arithmetic quotient singularities, regardless of whether they come from ordinary curves.  Corollary \ref{Cresolution} also answers the question asked in \cite[Remark 6.9]{Lo:wm} positively.  This has the following consequences, paralleling \cite[Corollaries 6.10, 6.14]{Lo:wm}.

\begin{cor}[cf.\ {\cite[Corollary 6.10]{Lo:wm}}]\label{C610}
Let $X/K$ be a curve with potentially good reduction over a $(\ints/p)^e$-extension $L/K$, such that the natural action of $\Gal(L/K)$ on a good model $X_{\mc{O}_L}$ over $X \times_K L$ gives rise to a weak wild arithmetic quotient singularity.  Then $X(K) \neq \emptyset$.
\end{cor}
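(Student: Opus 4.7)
The plan is to exhibit a multiplicity-$1$ component in the special fiber of a regular proper model of $X$ and lift a $k$-point on it to a $K$-point via Hensel's lemma.

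First, I would form the quotient $\mc{X}' := X_{\mc{O}_L}/\Gal(L/K)$, which is a proper normal model of $X$ over $\mc{O}_K$, and then pass to the minimal regular resolution $\pi:\tilde{\mc{X}}\to\mc{X}'$. By hypothesis, $\mc{X}'$ has at least one weak wild arithmetic quotient singularity $x$, and $\tilde{\mc{X}}$ is a proper regular $\mc{O}_K$-model of $X$.

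Next, I would invoke Corollary \ref{Cwwsingres} (together with Proposition \ref{prop:lg}, which lets one transfer the local resolution picture from the $\proj^1_K$-model in Theorem \ref{Tresolution} to any formally isomorphic singularity) to describe $\pi^{-1}(x)$. Among its components sits the one corresponding to the valuation $\tilde{v}_0 = [v_0, v_1(x) = \lambda_0]$ in the notation of Theorem \ref{Tresolution}. By Theorem \ref{Tclassification}(i) we have $0 < \lambda_1 < 1$, so $\lambda_0 = \lfloor\lambda_1\rfloor = 0$ and $\tilde{v}_0$ is just the Gauss valuation. Lemma \ref{Lmultiplicity}(iv) then tells me that the corresponding component $E_0\subset\pi^{-1}(x)$ is a copy of $\proj^1_k$ of multiplicity $1$ in the special fiber of $\tilde{\mc{X}}$.

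Finally, I would observe from Figure \ref{fig:resolutiongraph} that $E_0$ is a leaf of the resolution graph: it meets only the adjacent $w_{0,\lambda}$-component, so only at finitely many (in fact one) points of the special fiber of $\tilde{\mc{X}}$. Pick any other $k$-rational point $\bar{x} \in E_0$. Since $\bar{x}$ lies on a single multiplicity-$1$ component at which $\tilde{\mc{X}}$ is regular, $\tilde{\mc{X}}$ is smooth over $\mc{O}_K$ at $\bar{x}$. As $K$ is complete, Hensel's lemma lifts $\bar{x}$ to an $\mc{O}_K$-section of $\tilde{\mc{X}}$, whose generic fiber supplies a $K$-point of $X$. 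The argument is essentially immediate from the explicit resolution; the only genuine checks are that $\tilde{v}_0$ really is the Gauss valuation (hence of multiplicity $1$) and that it is a leaf of the exceptional graph, and both follow directly from Theorem \ref{Tresolution}, Theorem \ref{Tclassification}(i), and Figure \ref{fig:resolutiongraph}.
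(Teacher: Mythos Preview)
Your proof is correct and follows the same approach as the paper: locate a multiplicity-$1$ component in the exceptional fiber (the one corresponding to the Gauss valuation, using Theorem~\ref{Tclassification}(i) to see $\lambda_0=0$) and conclude that $X$ has a $K$-point. The paper simply asserts in one line that a $K$-point specializes to this multiplicity-$1$ component, whereas you spell out the smoothness and Hensel-lifting argument; the only minor quibble is that Lemma~\ref{Lmultiplicity}(ii) (rather than (iv)) is the more direct citation for the multiplicity being $1$.
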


\proof
By Theorem \ref{Tresolution}(i), the special fiber of the minimal resolution of $X_{\mc{O}_L}/(\Gal(L/K))$ has an irreducible component with multiplicity $1$, namely, the component corresponding to the Gauss valuation $v_0$.  Since this is a model of $X/K$, there is a point of $X(K)$ that specializes to this component.
\Endproof

\begin{cor}[cf.\ {\cite[Corollary 6.14]{Lo:wm}}, {\cite[Theorem 4.1]{Lo:wqs}})]\label{C614}
Fix any prime $p$ and any $e \geq 1$.  For each integer $m > 0$, there exist a two-dimensional regular local ring $B$ of equicharacteristic $p$ and a two-dimensional regular local ring $B'$ of mixed characteristic, each endowed with an action of $G := (\ints/p)^e$, such that $\Spec B^G$ and $\Spec (B')^G$ are singular exactly at their respective closed points, and the graphs associated with the minimal resolutions of $\Spec B^G$ and $\Spec (B')^G$ have one node and more than $m$ vertices.  
\end{cor}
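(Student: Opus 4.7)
The plan is to reduce to the case $e = 1$ and apply Corollary \ref{Cresolution} to a weak wild $\ints/p$-quotient singularity of type $(1,s)$ with $s$ large. Given $m$, I would first choose a positive integer $s$ with $p \nmid s$ and $sp > m$. In the equicharacteristic case, set $K := k((t))$; by Remark \ref{Rrsexists} there is a $\ints/p$-extension $L/K$ with ramification jump $s$. In the mixed characteristic case, take $K$ to be a totally ramified extension of $\Frac(W(k))$ whose absolute ramification index $e_K$ is large enough that $s \leq p e_K/(p-1)$; Remark \ref{Rrsexists} again produces such an $L/K$.

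Next, pick a generator $\alpha$ of $L/K$ with $v_L(\alpha) = 1$, let $g(x) \in K[x]$ be its minimal polynomial, and consider the model $\mc{X}$ of $\proj^1_K$ with irreducible special fiber corresponding to the inductive valuation $[v_0,\, v_1(x) = 1/p,\, v_2(g(x)) = 1+s]$; by Corollary \ref{Cclassification} and Remark \ref{Rrsexists}, $\mc{X}$ has a weak wild singularity $x$ of type $(1,s)$. Let $\mc{Y}$ be the normalization of $\mc{X}$ in the function field of $\proj^1_L$ and $y \in \mc{Y}$ a point above $x$. Then $\mc{Y}$ is smooth at $y$, so $B := \hat{\OO}_{\mc{Y},y} \cong \OO_L[[z]]$ is a two-dimensional regular local ring (of the required characteristic type) carrying a faithful $\ints/p$-action with $B^{\ints/p} = \hat{\OO}_{\mc{X},x}$. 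I would then extend the action to $G = (\ints/p)^e$ by letting the remaining $e-1$ factors act trivially through the projection $G \to \ints/p$; this leaves $B^G = B^{\ints/p}$ unchanged. Since $x$ is an isolated singularity of the normal surface $\mc{X}$ (every height-one prime of a normal $2$-dimensional local ring yields a DVR localization), $\Spec B^G$ is singular exactly at its closed point.

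Finally, by Corollary \ref{Cresolution}(i), the minimal resolution graph of $\Spec B^G$ is the $(1,s)$-graph depicted in Figure \ref{fig:r-s-graph}, which has a unique node and, by inspection of its left-hand branch alone, at least $sp > m$ vertices. The only mild obstacle is in mixed characteristic, where one must ensure that arbitrarily large $e_K$ can be realized while preserving the algebraically closed residue field $k$; this is standard, since $\Frac(W(k))$ admits totally ramified extensions of every degree (e.g., by adjoining roots of $p$).
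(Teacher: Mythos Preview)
Your argument is correct for the statement as literally worded, but it takes a route the paper deliberately avoids. By letting the extra $e-1$ copies of $\ints/p$ act trivially, you collapse the problem to $e=1$ and invoke Corollary \ref{Cresolution}. That produces a $G$-action on $B$ with the right invariant ring, but the action is not faithful, so the corollary for $e\geq 2$ becomes a tautological restatement of the $e=1$ case. The reason the authors (and Lorenzini in the cited results) phrase the corollary for arbitrary $e$ is precisely to exhibit \emph{faithful} $(\ints/p)^e$-quotient singularities whose resolution graphs are arbitrarily large; your construction does not deliver that.

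The paper's proof instead chooses a genuine $G=(\ints/p)^e$-extension $L/K$ with a single, arbitrarily large ramification jump $s$, takes $g(x)$ to be the minimal polynomial of a uniformizer of $L$, and works with the model corresponding to $[v_0,\,v_1(x)=1/p^e,\,v_2(g(x))=1+s]$. This is a weak wild singularity with a faithful $G$-action on the covering regular local ring, and one must appeal to Theorem \ref{Tresolution} directly (not the $\ints/p$-specific Corollary \ref{Cresolution}) to see that the resolution graph has a single node and roughly $sp^e$ vertices among the $w_{1,\lambda}$. Your treatment of the mixed-characteristic base (enlarging $e_K$ via totally ramified extensions of $\Frac(W(k))$) and your remark that the singularity is isolated are both fine and parallel what the paper does implicitly.
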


\proof
In either the equicharacteristic or the mixed characteristic case, one can find $K$ as in our notation and a $G$-extension $L/K$ with arbitrarily high single ramification jump $s$.  Let $g(x)$ be the monic minimal polynomial over $K$ for a uniformizer $\pi_L$ of $L$.  By Theorem \ref{Tresolution}(ii), a weak wild quotient singularity arising from a model $\mc{X}$ of $\proj^1_K$ with irreducible special fiber corresponding to the valuation $[v_0,\, v_1(x) = 1/p^e,\, v_2(g(x)) = 1 + s]$ has resolution graph with one node and at least $ps$ vertices (there are $ps$ vertices represented among the $w_{1, \lambda}$, since the elements of $(1/p^e)\ints$ between $1$ and $1+s$ form a $1/p^e$-shortest path from $1+s$ to $1$).  Such a singularity is a $G$-quotient of a regular two-dimensional local ring, which is our $B$ (or $B'$). 
\Endproof

\appendix

\section{Negative continued fractions and shortest $N$-paths}\label{Sneg}

Given a rational number $y$, there is a unique way of
expressing $y$ in the form 
$$y = a_0 - \frac{1}{a_1 - \frac{1}{\ldots -
    \frac{1}{a_n}}},$$
where the $a_i$ are integers with $a_i \geq 2$ for $i \geq 1$.  This is called the \emph{negative continued fraction expansion} of $y$, and its
truncation $$a_0 - \frac{1}{a_1 - \frac{1}{\ldots -
    \frac{1}{a_i}}},$$
at $a_i$ 
is called the \emph{$i$th convergent}.  For short, we sometimes write such a negative continued fraction expansion as $y = [a_0, a_1, \ldots, a_n]$.

\begin{prop}\label{Pbasiccontfrac}
Let $b_i$ and $c_i$ satisfy the recurrence relations $$b_{i+2} = a_{i+2}b_{i+1} - b_i,\ c_{i+2} = a_{i+2}c_{i+1} - c_i$$ for $i \geq 0$, with $$b_0 = a_0,\ c_0 = 1,\ b_1 = a_0a_1 - 1,\ c_1 = a_1.$$ 
Then the $i$th convergent of $y$ can be written in lowest terms as $b_i/c_i$.  
Furthermore, we have $b_i c_{i+1} - b_{i+1}c_i = 1$ for all $0 \leq i
< n$.  In particular, the convergents form a decreasing sequence.
\end{prop}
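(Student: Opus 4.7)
The plan is to prove the two displayed formulas by induction on $i$, deduce the ``lowest terms'' assertion from the second, and then use a positivity argument on the $c_i$ to conclude that the convergents form a strictly decreasing sequence.

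First I would establish Claim 1 (that the $i$th convergent equals $b_i/c_i$) by induction on $i$. The cases $i=0$ and $i=1$ are immediate from the initial conditions. For the inductive step, I would use the formal identity
\[
  [a_0, a_1, \ldots, a_{i+1}, a_{i+2}] = \bigl[a_0, a_1, \ldots, a_{i+1} - \tfrac{1}{a_{i+2}}\bigr].
\]
Applying the inductive hypothesis to the right-hand side (viewed as a continued fraction of length $i+1$ whose last entry is a rational number rather than an integer, which the recurrence handles equally well), we obtain
\[
  \frac{(a_{i+1} - 1/a_{i+2})\, b_i - b_{i-1}}{(a_{i+1} - 1/a_{i+2})\, c_i - c_{i-1}}.
\]
Clearing the $1/a_{i+2}$ and collecting using $b_{i+1} = a_{i+1} b_i - b_{i-1}$ and $c_{i+1} = a_{i+1} c_i - c_{i-1}$, the numerator becomes $a_{i+2} b_{i+1} - b_i = b_{i+2}$ and similarly for the denominator, as desired.

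Next I would prove Claim 2 by a direct induction. The base case is $b_0 c_1 - b_1 c_0 = a_0 a_1 - (a_0 a_1 - 1) = 1$, and for the inductive step
\[
  b_{i+1} c_{i+2} - b_{i+2} c_{i+1} = b_{i+1}(a_{i+2} c_{i+1} - c_i) - (a_{i+2} b_{i+1} - b_i) c_{i+1} = b_i c_{i+1} - b_{i+1} c_i = 1.
\]
This identity immediately implies $\gcd(b_i, c_i) = 1$, completing the ``lowest terms'' claim.

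Finally, for the strict decrease, subtracting two consecutive convergents gives
\[
  \frac{b_{i+1}}{c_{i+1}} - \frac{b_i}{c_i} = \frac{b_{i+1} c_i - b_i c_{i+1}}{c_i c_{i+1}} = \frac{-1}{c_i c_{i+1}},
\]
so it suffices to show $c_i > 0$ for all $i$. Since $a_j \geq 2$ for $j \geq 1$, we have $c_0 = 1$, $c_1 = a_1 \geq 2$, and inductively $c_{i+2} = a_{i+2} c_{i+1} - c_i \geq 2 c_{i+1} - c_i > c_{i+1} > 0$ whenever $c_{i+1} > c_i > 0$. I do not expect any serious obstacle here; the only step requiring genuine care is bookkeeping the formal identity in the induction for Claim 1, and in particular checking that the recurrence is applicable when the final entry $a_{i+1} - 1/a_{i+2}$ is rational rather than an integer.
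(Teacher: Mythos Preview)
Your proof is correct and follows essentially the same approach as the paper's: a straightforward induction for both the convergent formula and the determinant identity (the paper simply cites \cite{JonesThron} for this), with the ``lowest terms'' claim deduced from the latter. Your argument also makes explicit the positivity of the $c_i$ needed for the decreasing statement, which the paper leaves implicit here and only records in the subsequent corollary.
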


\proof
A straightforward proof by induction, see e.g. \cite[Theorem 2.1]{JonesThron}, shows that if $b_i$ and $c_i$ are defined as in the recursion, then the $i$th convergent is $b_i/c_i$ and $b_i c_{i+1} - b_{i+1}c_i = 1$.  This last equality implies that $b_i$ and $c_i$ are relatively prime.
\Endproof

\begin{cor}\label{Cincreasingsequence}
If $b_i/c_i$ is the $i$th convergent of the negative continued
fraction expansion of some $a \in \rats_{>0}$, written in lowest
terms, then the sequence of the $c_i$ is strictly increasing.  The
sequence of the $b_i$ is strictly increasing unless all $b_i$ equal $1$.
\end{cor}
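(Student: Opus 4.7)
The plan is to derive both claims from a simple propagation principle: if a sequence $(x_i)$ satisfies $x_{i+2} = a_{i+2}\, x_{i+1} - x_i$ with $a_{i+2} \geq 2$, and one has $x_{i+1} > x_i$ at some index, then
\[
  x_{i+2} \;\geq\; 2x_{i+1} - x_i \;=\; x_{i+1} + (x_{i+1} - x_i) \;>\; x_{i+1}.
\]
Thus once strict increase is ignited at one index, it propagates to all subsequent indices. All that is left is to ignite it for each of $(c_i)$ and $(b_i)$.

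For the $c_i$, ignition is immediate: $c_0 = 1$ and $c_1 = a_1 \geq 2$, so $c_0 < c_1$, and the propagation principle delivers $c_0 < c_1 < c_2 < \cdots$.

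For the $b_i$, I would first observe that $a \in \rats_{>0}$ forces $a_0 \geq 1$, since (for $n \geq 1$) the tail $1/[a_1,\ldots,a_n]$ lies in $(0,1)$, so $a \in (a_0 - 1, a_0]$. A direct calculation gives
\[
  b_1 - b_0 \;=\; a_0 a_1 - 1 - a_0 \;=\; a_0(a_1 - 1) - 1 \;\geq\; 0,
\]
with equality precisely when $a_0 = 1$ and $a_1 = 2$. Away from this boundary case we have $b_1 > b_0$ and the propagation principle gives strict increase throughout. In the boundary case $b_0 = b_1 = 1$, and an induction on $i$ shows that as long as $a_0 = 1$ and $a_1 = \cdots = a_i = 2$ one has $b_0 = \cdots = b_i = 1$; if this persists for every $i$ then all $b_i$ equal $1$ (corresponding to $a = 1/(n+1)$), while if $j \geq 2$ is the first index with $a_j \geq 3$ then $b_j = a_j - 1 \geq 2 > 1 = b_{j-1}$ reignites the propagation.

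The only real subtlety is the base case for $(b_i)$: the identity $b_1 - b_0 = a_0(a_1 - 1) - 1$ pinpoints exactly when the recurrence fails to strictly increase at the first step, and this is precisely the expansion $a = [1, 2, \ldots]$; everything else is automatic from the propagation principle.
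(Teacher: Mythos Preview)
Your approach is the same one the paper sketches in its one-line proof: use the recursion from Proposition~\ref{Pbasiccontfrac} together with $a_i\ge 2$ for $i\ge 1$ and induct. Your propagation principle is exactly the inductive step, and your treatment of the $c_i$ is clean and correct.

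Your case analysis for the $b_i$, however, is more honest than your conclusion. You correctly show that when $a_0=1$ and $a_1=\cdots=a_{j-1}=2$ with $a_j\ge 3$ for some $j\ge 2$, one has $b_0=\cdots=b_{j-1}=1$ and then $b_{j-1}<b_j<b_{j+1}<\cdots$. But that sequence is \emph{not} strictly increasing (since $b_0=b_1$), nor are all $b_i$ equal to $1$. Concretely, $a=2/5=[1,2,3]$ has convergents $1/1,\,1/2,\,2/5$, so $(b_0,b_1,b_2)=(1,1,2)$. So ``reigniting the propagation'' does not salvage the stated dichotomy; your own analysis shows the $b_i$-clause of the corollary is slightly misstated. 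The correct assertion your argument proves is that $(b_i)$ is non-decreasing, constant only with common value $1$, and strictly increasing once it leaves $1$.

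This does not affect anything downstream: the paper only ever invokes the $c_i$-part of the corollary (in Corollary~\ref{Cconvergentgaps} and in the proof of Lemma~\ref{Lpathtonearestinteger}). But you should flag the discrepancy rather than present the boundary case as if it completes a proof of the literal statement.
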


\proof
This follows from the recursive formulas in Proposition \ref{Pbasiccontfrac} using induction and the fact that $a_i \geq 2$ for $i \geq 1$.
\Endproof

\begin{cor}\label{Cconvergentgaps}
If $b_i/c_i$ and $b_j/c_j$ are convergents of the negative continued fraction expansion of $y$ written in lowest terms, and $j \geq i+2$, then $b_i/c_i - b_j/c_j > 1/c_ic_j$.
\end{cor}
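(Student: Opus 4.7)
The proof I would give rests on telescoping the difference $b_i/c_i - b_j/c_j$ along consecutive convergents. From Proposition \ref{Pbasiccontfrac} we have $b_k c_{k+1} - b_{k+1} c_k = 1$ for each $k$, so for any index $k<n$,
\[
  \frac{b_k}{c_k} - \frac{b_{k+1}}{c_{k+1}} = \frac{1}{c_k c_{k+1}},
\]
and in particular this quantity is strictly positive (consistent with the monotonicity asserted in Proposition \ref{Pbasiccontfrac}).

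Summing from $k=i$ to $k=j-1$, we obtain
\[
  \frac{b_i}{c_i} - \frac{b_j}{c_j} \;=\; \sum_{k=i}^{j-1} \frac{1}{c_k c_{k+1}}.
\]
Since $j \geq i+2$, the sum contains the term $1/(c_i c_{i+1})$, and all other terms are positive, so the sum is strictly larger than $1/(c_i c_{i+1})$.

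It therefore remains only to compare $1/(c_i c_{i+1})$ with $1/(c_i c_j)$. By Corollary \ref{Cincreasingsequence} the sequence $(c_k)$ is strictly increasing, and $j \geq i+2 > i+1$ gives $c_j > c_{i+1}$; hence $1/(c_i c_{i+1}) > 1/(c_i c_j)$. Combining the two strict inequalities yields the desired bound. No step is really an obstacle here; the only point worth care is ensuring that the hypothesis $j \geq i+2$ (rather than merely $j > i$) is used exactly once, namely to guarantee that the telescoping sum has more than one term so that dropping the tail still leaves a strictly larger quantity than $1/(c_i c_j)$.
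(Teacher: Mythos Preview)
Your proof is correct and essentially identical to the paper's: both peel off the first gap $b_i/c_i - b_{i+1}/c_{i+1} = 1/(c_i c_{i+1})$, observe that the remaining difference is positive, and then use $c_j > c_{i+1}$ from Corollary \ref{Cincreasingsequence} to conclude. The only cosmetic difference is that you telescope the full sum explicitly whereas the paper writes the remainder as a single positive term $b_{i+1}/c_{i+1} - b_j/c_j$.
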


\proof
By Corollary \ref{Cincreasingsequence}, the $c_i$ are monotonically increasing.  Thus
$$\frac{b_i}{c_i} - \frac{b_j}{c_j} = \left(\frac{b_i}{c_i} - \frac{b_{i+1}}{c_{i+1}}\right) + \left(\frac{b_{i+1}}{c_{i+1}} - \frac{b_j}{c_j}\right)
= \frac{1}{c_ic_{i+1}} + \left(\frac{b_{i+1}}{c_{i+1}} - \frac{b_j}{c_j}\right) > \frac{1}{c_ic_{i+1}} > \frac{1}{c_ic_j}.$$
\Endproof

\begin{defn}\label{Dshortestpath}
If $a > a' \geq 0$ are rational numbers, and $N$ is a
positive integer, an \emph{$N$-path} from $a$ to $a'$  is a
sequence $a = b_0/c_0 > b_1/c_1 > \cdots > b_n/c_n = a'$ of rational numbers in lowest terms such that
$$\frac{b_i}{c_i} - \frac{b_{i+1}}{c_{i+1}} = \frac{N}{\lcm(N, c_i)\lcm(N, c_{i+1})}$$ for
$0 \leq i \leq n-1$.  If, in addition, no proper subsequence of $b_0/c_0 > \cdots > b_n/c_n$ containing
  $b_0/c_0$ and $b_n/c_n$ is an $N$-path, then the sequence is called
  a \emph{shortest $N$-path} from $a$ to $a'$.
\end{defn}

The following lemma is useful in the proof of Lemma \ref{Lcrossingsingularity}, where the quantities $\lcm(c_i, N)$ are interpreted as multiplicities of irreducible components of the special fiber of an arithmetic surface.

\begin{lem}\label{Lcantblowdown}
If $a > a' > 0$ are rational numbers, and $a = b_0/c_0 > b_1/c_1 > \cdots > b_n/c_n = a'$ is a shortest $N$-path between $a$ and $a'$, then for $0 < i < n$, we have $\lcm(N, c_i) \neq \lcm(N, c_{i-1}) + \lcm(N, c_{i+1})$.  
\end{lem}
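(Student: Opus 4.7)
The plan is to argue by contradiction: assume that at some index $0 < i < n$ we have $\mu_i = \mu_{i-1} + \mu_{i+1}$, where $\mu_j := \lcm(N, c_j)$ denotes the multiplicity appearing in the denominator of the gap formula. I will then show that under this assumption the term $b_i/c_i$ can be deleted from the sequence while preserving the $N$-path property, directly contradicting the word ``shortest'' in the hypothesis.

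The key step is a one-line computation. The two $N$-path conditions at $i$ and $i-1$ give
\[
   \frac{b_{i-1}}{c_{i-1}} - \frac{b_{i+1}}{c_{i+1}}
      = \frac{N}{\mu_{i-1}\mu_i} + \frac{N}{\mu_i\mu_{i+1}}
      = \frac{N(\mu_{i-1}+\mu_{i+1})}{\mu_{i-1}\mu_i\mu_{i+1}}.
\]
Substituting the assumed identity $\mu_i = \mu_{i-1} + \mu_{i+1}$ collapses the right-hand side to $N/(\mu_{i-1}\mu_{i+1})$, which is precisely the $N$-path step size required between the two fractions $b_{i-1}/c_{i-1}$ and $b_{i+1}/c_{i+1}$ (both already in lowest terms by the definition of an $N$-path). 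Therefore the subsequence obtained by deleting $b_i/c_i$ is itself an $N$-path from $a$ to $a'$, contradicting minimality.

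There is really no main obstacle here; the statement is essentially a formal consequence of Definition \ref{Dshortestpath}. The only thing to verify is that the reduced sequence still satisfies every requirement of being an $N$-path: strict decrease (inherited from the original strict decrease, since we only remove an intermediate term), the lowest-terms condition on each fraction (inherited for the surviving entries), and the gap equation on consecutive pairs (only the new pair $(b_{i-1}/c_{i-1}, b_{i+1}/c_{i+1})$ needs checking, and this is exactly what the computation above verifies). No further case analysis or number-theoretic input is needed.
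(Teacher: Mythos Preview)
Your proof is correct and follows exactly the same approach as the paper's: compute the combined gap $b_{i-1}/c_{i-1} - b_{i+1}/c_{i+1}$ as a sum of the two $N$-path steps, substitute the assumed identity $\lcm(N,c_i) = \lcm(N,c_{i-1}) + \lcm(N,c_{i+1})$, observe that the result is exactly the $N$-path gap required between the outer terms, and conclude that deleting $b_i/c_i$ yields a shorter $N$-path, contradicting minimality. The only difference is cosmetic (your use of the abbreviation $\mu_j$).
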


\proof
By the definition of an $N$-path,
$$\frac{b_{i-1}}{c_{i-1}} - \frac{b_{i+1}}{c_{i+1}} = \frac{N}{\lcm(N, c_{i-1})\lcm(N, c_i)} + \frac{N}{\lcm(N, c_i)\lcm(N, c_{i+1})} = \frac{N(\lcm(N, c_{i-1}) + \lcm(N, c_{i+1}))}{\lcm(N, c_{i-1})\lcm(N, c_i)\lcm(N, c_{i+1})}.$$

If $\lcm(N, c_i) = \lcm(N, c_{i-1}) + \lcm(N, c_{i+1})$, then the above expression equals $N/(\lcm(N, c_{i-1})\lcm(N, c_{i+1}))$.  But this means that $b_i/c_i$ can be removed while keeping the sequence an $N$-path, contradicting that it is a shortest $N$-path. 
\Endproof

The remainder of this appendix will be devoted to showing that there always exists a unique shortest $N$-path from $a$ to $a'$ (Proposition \ref{Puniquepathexists}).  There will also be a small detour (Corollary \ref{Cshortestpath2} and Example \ref{E3path}) to show how to compute shortest $N$-paths in practice.  

\begin{lem}\label{LdividebyN}
If $b/c \in \rats_{>0}$ is written in lowest terms, and if $b/Nc = \tilde{b}/\tilde{c}$, where $\tilde{b}/\tilde{c}$ is in lowest terms, then $\lcm(N, \tilde{c}) = Nc$. 
\end{lem}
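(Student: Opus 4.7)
The plan is to reduce everything to a direct computation of $\gcd(b, Nc)$, exploiting the hypothesis that $\gcd(b,c)=1$. The identity $\lcm(N,\tilde c)\cdot\gcd(N,\tilde c) = N\tilde c$ means it suffices to compute $\gcd(N,\tilde c)$ and then rearrange.

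First I would set $d := \gcd(b, N)$ and write $N = dN'$, $b = db'$ with $\gcd(b', N') = 1$. Since $\gcd(b,c) = 1$, we have $\gcd(b, Nc) = \gcd(b, N) = d$ (any prime dividing both $b$ and $Nc$ must divide $N$, as it cannot divide $c$). Therefore, upon reducing $b/(Nc)$ to lowest terms, we obtain $\tilde b = b/d = b'$ and $\tilde c = Nc/d = N'c$.

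Next I would compute $\gcd(N, \tilde c) = \gcd(dN', N'c) = N'\gcd(d, c)$. Because $d \mid b$ and $\gcd(b, c) = 1$, we have $\gcd(d, c) = 1$, so $\gcd(N, \tilde c) = N'$. Substituting into the $\lcm$ formula gives
\[
\lcm(N, \tilde c) = \frac{N\tilde c}{\gcd(N, \tilde c)} = \frac{(dN')(N'c)}{N'} = dN'c = Nc,
\]
as required. There is no real obstacle here; the only subtlety is noticing that the coprimality of $b$ and $c$ forces $\gcd(b, Nc) = \gcd(b, N)$ and then propagates to give $\gcd(d, c) = 1$, which is what makes the final cancellation work.
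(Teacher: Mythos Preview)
Your proof is correct and follows essentially the same approach as the paper: both arguments identify $\tilde c = Nc/\gcd(N,b)$ and use the key observation that $\gcd(\gcd(N,b),c)=1$ (from $\gcd(b,c)=1$). The only cosmetic difference is that the paper verifies $Nc=\lcm(N,\tilde c)$ by noting the quotients $Nc/N=c$ and $Nc/\tilde c=\gcd(N,b)$ are coprime, whereas you compute $\gcd(N,\tilde c)=N'$ explicitly and apply $\lcm\cdot\gcd=\text{product}$.
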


\proof
We have $Nc = \tilde{c} \gcd(N, b)$, 
so $Nc$ is a multiple of both $N$ and $\tilde{c}$.  Since $Nc/N = c$
and $Nc/\tilde{c} = \gcd(N, b)$ are relatively prime (because $b/c$ is in lowest terms), $Nc$ is in fact the least common multiple of $N$ and $\tilde{c}$.
\Endproof

The following lemma allows us to focus on $1$-paths.

\begin{lem}\label{LshortestpathnewN}
The sequence $a = b_0/c_0 > b_1/c_1 > \cdots > b_n/c_n = a'$ is a shortest $1$-path from $a$ to $a'$ if and only if $$\frac{a}{N} = \frac{b_0}{Nc_0} > \cdots > \frac{b_n}{Nc_n} = \frac{a'}{N}$$ is a shortest $N$-path from $a/N$ to $a'/N$ (note that the $b_i/(Nc_i)$ are not necessarily in lowest terms).
\end{lem}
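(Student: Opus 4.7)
The plan is to reduce the $N$-path condition on $b_i/(Nc_i)$ to a $1$-path condition on $b_i/c_i$ via a direct algebraic computation, invoking Lemma \ref{LdividebyN} to handle the denominators that appear after reduction. Once this termwise equivalence is established, the "shortest" condition transfers from one side to the other essentially formally.

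Concretely, first I would write $b_i/(Nc_i) = \tilde{b}_i/\tilde{c}_i$ in lowest terms and note that, since the $b_i/c_i$ are in lowest terms and decreasing, so are the $\tilde{b}_i/\tilde{c}_i$. By Lemma \ref{LdividebyN}, $\lcm(N,\tilde{c}_i) = Nc_i$, so the $N$-path gap
\[
\frac{N}{\lcm(N,\tilde{c}_i)\,\lcm(N,\tilde{c}_{i+1})} = \frac{N}{Nc_i \cdot Nc_{i+1}} = \frac{1}{Nc_ic_{i+1}}.
\]
On the other hand, the actual difference is
\[
\frac{\tilde{b}_i}{\tilde{c}_i} - \frac{\tilde{b}_{i+1}}{\tilde{c}_{i+1}} = \frac{b_i}{Nc_i} - \frac{b_{i+1}}{Nc_{i+1}} = \frac{b_ic_{i+1} - b_{i+1}c_i}{Nc_ic_{i+1}}.
\]
Thus the $N$-path condition on the $\tilde{b}_i/\tilde{c}_i$ is equivalent to $b_ic_{i+1} - b_{i+1}c_i = 1$, which (given the sequence is decreasing) is exactly the $1$-path condition for the $b_i/c_i$.

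Next, I would observe that this equivalence is purely termwise, so it applies equally to any subsequence of $b_0/c_0 > \cdots > b_n/c_n$ containing the endpoints: such a subsequence is a $1$-path if and only if its image under $b/c \mapsto b/(Nc)$ (reduced to lowest terms) is an $N$-path. From this, "shortest" transfers directly: the $1$-path has no proper subsequence containing the endpoints that is itself a $1$-path if and only if the corresponding $N$-path admits no proper subsequence containing the endpoints that is an $N$-path.

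There is no substantial obstacle here; the whole argument is a short algebraic verification, and the only nontrivial input — the identification $\lcm(N,\tilde{c}_i) = Nc_i$ — is precisely the content of Lemma \ref{LdividebyN}.
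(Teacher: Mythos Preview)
Your proposal is correct and follows essentially the same approach as the paper: reduce $b_i/(Nc_i)$ to lowest terms $\tilde b_i/\tilde c_i$, invoke Lemma~\ref{LdividebyN} to identify $\lcm(N,\tilde c_i)=Nc_i$, and observe that the $N$-path gap condition then becomes exactly the $1$-path gap condition scaled by $1/N$. Your treatment of the ``shortest'' part via subsequences is slightly more explicit than the paper's, but the argument is the same in substance.
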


\proof
For $0 \leq i \leq n$, write $b_i/Nc_i$ in lowest terms as $\tilde{b}_i/\tilde{c}_i$.  Since $\tilde{b}_{i-1}/\tilde{c}_{i-1} - \tilde{b}_i/\tilde{c_i} = (1/N)(b_{i-1}/c_{i-1} - b_i/c_i)$, it suffices to show, for $0 \leq i \leq n-1$, that $$\frac{N}{\lcm(N,
  \tilde{c}_i)\lcm(N, \tilde{c}_{i+1})} = \frac{1}{N} \cdot \frac{1}{c_ic_{i+1}}.$$  
Since $\lcm(N, \tilde{c}_i) = Nc_i$ for
all $i$ (Lemma \ref{LdividebyN}), we are done.  
\Endproof

The following proposition depends on Corollary \ref{Ccrossingresolution}, but Corollary \ref{Ccrossingresolution} uses only the definition of shortest $N$-path and Lemma \ref{LshortestpathnewN}, and there is no circular reasoning.
\begin{prop}\label{Punique}
If $a' > a \geq 0$ are rational numbers and $N$ is an integer, a shortest $N$-path from $a$ to $a'$ is necessarily unique.
\end{prop}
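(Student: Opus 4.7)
The plan is to reduce to the case $N=1$ using Lemma \ref{LshortestpathnewN}, and then to invoke Corollary \ref{Ccrossingresolution} together with the uniqueness of the minimal regular resolution recalled in \S\ref{aqs}. (I read the hypothesis $a' > a \geq 0$ as a typo for $a > a' \geq 0$, matching Definition \ref{Dshortestpath}, so that a path from $a$ to $a'$ is a decreasing sequence.)

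By Lemma \ref{LshortestpathnewN}, the map sending a shortest $1$-path $(b_i/c_i)$ from $a/N$ to $a'/N$ to the sequence $(b_i/(Nc_i))$ gives a bijection with shortest $N$-paths from $a$ to $a'$, so it suffices to prove uniqueness for $1$-paths. Given rationals $a > a' \geq 0$, let $\mc{X}$ be the normal model of $\proj^1_K$ whose special fiber consists of exactly the two irreducible components corresponding to the geometric valuations $v := [v_0, v_1(x) = a]$ and $v' := [v_0, v_1(x) = a']$ (with $v' = v_0$ when $a' = 0$). Lemma \ref{Lcrossingsingularity}, applied with $n=1$, $g_1(x) = x$, and $N = 1$, tells us that the intersection of these two components is singular on $\mc{X}$ precisely when $(a, a')$ is not itself a $1$-path.

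Suppose first that $(a, a')$ is not a $1$-path, and let $P_1, P_2$ be two shortest $1$-paths from $a$ to $a'$. Corollary \ref{Ccrossingresolution} applied to each $P_i$ produces a minimal regular resolution $\mc{X}'_i \to \mc{X}$ whose exceptional components correspond precisely to the valuations $[v_0, v_1(x) = \lambda]$ for $\lambda$ ranging over the intermediate elements of $P_i$. By the uniqueness of the minimal regular resolution, $\mc{X}'_1$ and $\mc{X}'_2$ agree as birational modifications of $\mc{X}$, so their associated sets of valuations coincide. Since the assignment $\lambda \mapsto [v_0, v_1(x) = \lambda]$ is injective and each $P_i$ is a strictly decreasing sequence with endpoints $a$ and $a'$, I conclude $P_1 = P_2$.

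In the remaining case, where $(a, a')$ is itself a $1$-path, the length-two sequence $(a, a')$ is automatically a shortest $1$-path and one shows directly that no other exists. If $(a = a_0 > \cdots > a_n = a')$ with $n \geq 2$ were a shortest $1$-path, then writing $a_i = p_i/q_i$ in lowest terms, the $1$-path condition on consecutive terms forces $p_{i-1}q_i - p_iq_{i-1} = 1$, so consecutive terms are Farey neighbors. Standard Farey theory (any fraction strictly between the Farey neighbors $p_0/q_0$ and $p_n/q_n$ has denominator at least $q_0 + q_n$) combined with Lemma \ref{Lcantblowdown} (which forbids $q_i = q_{i-1} + q_{i+1}$ at intermediate indices) then yields a contradiction. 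The main obstacle is this combinatorial verification in the non-singular case; the singular case is an essentially formal consequence of the uniqueness of minimal regular resolutions.
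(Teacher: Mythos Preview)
Your approach is essentially the paper's: reduce to $N=1$ via Lemma \ref{LshortestpathnewN}, build the two-component model of $\proj^1_K$ from the valuations $[v_0,v_1(x)=a]$ and $[v_0,v_1(x)=a']$, and read off uniqueness of the shortest path from uniqueness of the minimal regular resolution via Corollary \ref{Ccrossingresolution}. The paper compresses this into a one-line appeal to Remark \ref{Runiquepath} and does not explicitly separate out the non-singular case.

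Your handling of that non-singular case is overcomplicated, and the Farey/Lemma \ref{Lcantblowdown} sketch does not obviously close as written (knowing $q_1\geq q_0+q_n$ and $q_i\neq q_{i-1}+q_{i+1}$ does not by itself yield a contradiction). But the case is in fact immediate from Definition \ref{Dshortestpath}: if $(a,a')$ is already a $1$-path and $(a=a_0>\cdots>a_n=a')$ with $n\geq 2$ is any $1$-path, then the proper subsequence $(a_0,a_n)=(a,a')$ is itself a $1$-path, so the longer sequence is not shortest. No Farey theory is needed.
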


\proof
By Lemma \ref{LshortestpathnewN}, it suffices to prove the corollary for $N = 1$.  Consider the normal model $\mc{X}$ of $\proj^1_K$ whose special fiber has two intersecting components corresponding to $v := [v_0,\, v_1(x) = a]$ and $w := [v_0,\, v_1(x) = a']$.  Remark \ref{Runiquepath} applied to this model yields the unicity.
\Endproof

In light of Proposition \ref{Punique}, we will now refer to a shortest $N$-path as \emph{the} shortest $N$-path.
We now turn to existence of $N$-paths.  Again, by Lemma \ref{LshortestpathnewN}, we may assume that $N = 1$.

\begin{lem}\label{Linvertreverse}
Given the shortest $1$-path from $a$ to $a'$, if one takes the reciprocal of all the elements and reverses the order, one obtains the shortest $1$-path from $1/a'$ to $1/a$.
\end{lem}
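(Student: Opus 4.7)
The plan is to show that the $1$-path condition is symmetric under the transformation ``reverse and invert,'' and then deduce the shortest property from the fact that this transformation sets up a bijection on subsequences.

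First I would note that for a $1$-path, the defining equation
\[
  \frac{b_i}{c_i} - \frac{b_{i+1}}{c_{i+1}} = \frac{1}{c_i c_{i+1}}
\]
(using $\lcm(1,c)=c$) is equivalent to the Bezout-like identity $b_i c_{i+1} - b_{i+1} c_i = 1$. This identity is symmetric between the pairs $(b_i,c_i)$ and $(c_i,b_i)$, which is exactly the symmetry needed.

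Next I would verify that the reciprocal sequence $c_n/b_n, c_{n-1}/b_{n-1}, \ldots, c_0/b_0$ makes sense and satisfies the definition. Since $a' > 0$ (which is implicit in the statement, as otherwise $1/a'$ is undefined), each $b_i \geq a' c_i > 0$; since $\gcd(b_i,c_i)=1$, each $c_i/b_i$ is already in lowest terms. The sequence $c_i/b_i$ is strictly increasing in $i$ (because $b_i/c_i$ is strictly decreasing and positive), so the reversed sequence is strictly decreasing from $c_n/b_n = 1/a'$ to $c_0/b_0 = 1/a$. Finally, applying the Bezout identity $b_i c_{i+1} - b_{i+1} c_i = 1$ in the form $c_{i+1} b_i - b_{i+1} c_i = 1$ shows
\[
  \frac{c_{i+1}}{b_{i+1}} - \frac{c_i}{b_i} = \frac{c_{i+1}b_i - c_i b_{i+1}}{b_i b_{i+1}} = \frac{1}{b_i b_{i+1}},
\]
which is precisely the $1$-path condition for the reversed sequence.

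It remains to upgrade ``$1$-path'' to ``shortest $1$-path.'' The map that reverses a sequence and takes reciprocals of each entry is an involution; it sends $1$-paths from $a$ to $a'$ bijectively onto $1$-paths from $1/a'$ to $1/a$, and respects the operation of passing to a subsequence that preserves the endpoints. Therefore a $1$-path is shortest (i.e., has no proper subsequence through its endpoints that is itself a $1$-path) if and only if its image under the involution is shortest. This completes the proof. The only real subtlety is the implicit assumption $a' > 0$ (needed for the reciprocals to exist), but this is forced by the statement's reference to $1/a'$.
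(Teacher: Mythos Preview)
Your proof is correct and follows essentially the same approach as the paper: both observe that the $1$-path condition $b_i c_{i+1} - b_{i+1} c_i = 1$ is preserved under swapping numerators and denominators, and then invoke the fact that ``invert and reverse'' is an involution to deduce the shortest property. You are more careful than the paper in explicitly verifying that the $c_i/b_i$ are in lowest terms and strictly decreasing, which the paper leaves implicit.
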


\proof
Observe that if $b_i/c_i > b_{i+1}/c_{i+1}$ are
two consecutive entries in a $1$-path, written in lowest
terms, then $c_{i+1}/b_{i+1} - c_i/b_i = 1/b_ib_{i+1}$.  This shows
that inverting and reversing a $1$-path yields a $1$-path.  But
since ``inverting and reversing'' is an involution, applying it to the
shortest $1$-path yields the shortest $1$-path.
\Endproof

\begin{lem}\label{Lpathtonearestinteger}
Let $a \in \rats \backslash \ints$ be positive.  
\begin{enumerate}[(i)]
\item The shortest $1$-path from $\lceil a \rceil$ to $a$ is given by the successive convergents in the negative continued fraction for $a$.
\item If $0 < a < 1$, the shortest $1$-path from $a$ to $0$ is given by
taking a shortest $1$-path from $\lceil 1/a \rceil$ to $1/a$, inverting
each entry, reversing the order, and appending $0$ at the end.
In particular, the nonzero entries are the reciprocals of the
convergents of the negative continued fraction expansion of $1/a$.  
\item For general $a$, the shortest $1$-path from $a$ to $\lfloor a
  \rfloor$ is given by adding 
$\lfloor a \rfloor$ to each entry of a shortest $1$-path from $a -
\lfloor a \rfloor$ to $0$, which can be calculated from part (ii).
\end{enumerate}
\end{lem}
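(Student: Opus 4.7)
\textbf{Proof plan for Lemma \ref{Lpathtonearestinteger}.}

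For part (i), I would first verify that the sequence of convergents indeed starts at $\lceil a\rceil$ and ends at $a$: the zeroth convergent is $a_0$, and since all $a_i\geq 2$ for $i\geq 1$ the tail continued fraction lies in $(0,1)$, which forces $a_0=\lceil a\rceil$. That the successive convergents $b_i/c_i$ form a $1$-path is immediate from Proposition \ref{Pbasiccontfrac}, which gives $b_ic_{i+1}-b_{i+1}c_i=1$ and hence
\[
\frac{b_i}{c_i}-\frac{b_{i+1}}{c_{i+1}}=\frac{1}{c_ic_{i+1}}=\frac{1}{\lcm(1,c_i)\lcm(1,c_{i+1})},
\]
matching the definition. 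To see the path is shortest, I will compute $b_{i-1}c_{i+1}-b_{i+1}c_{i-1}$ by substituting the recurrences $b_{i+1}=a_{i+1}b_i-b_{i-1}$ and $c_{i+1}=a_{i+1}c_i-c_{i-1}$; this collapses to $a_{i+1}(b_{i-1}c_i-b_ic_{i-1})=a_{i+1}$. Thus the gap across a deleted convergent is $a_{i+1}/(c_{i-1}c_{i+1})$, and since $a_{i+1}\geq 2$ this exceeds the required $1$-path gap, so no interior convergent may be removed.

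For part (ii), I would apply part (i) to $1/a>1$ to obtain the shortest $1$-path from $\lceil 1/a\rceil$ to $1/a$ via convergents $B_0/C_0,\ldots,B_m/C_m$. By Lemma \ref{Linvertreverse}, inverting and reversing this yields the shortest $1$-path from $a$ to $1/\lceil 1/a\rceil$. It remains to append $0$ and check two things: first, that the appended gap $1/\lceil 1/a\rceil - 0$ equals $1/(\lcm(1,B_0)\cdot\lcm(1,1))=1/B_0$, which it does; and second, that removing the now-penultimate entry $1/B_0$ breaks the $1$-path property. For the latter, removing $1/B_0$ requires $C_1/B_1-0=1/B_1$, i.e.\ $C_1=1$; but Corollary \ref{Cincreasingsequence} guarantees $C_1>C_0=1$, so this is impossible. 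The interior shortest-property is inherited from the inverted/reversed path. The degenerate case $1/a\in\ints$ (so $m=0$) just gives the trivial path $\{1/n,0\}$ and is immediate. The stated description of the nonzero entries as reciprocals of convergents of $1/a$ then follows by reading off the construction.

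For part (iii), the key observation is that translation by an integer $\lfloor a\rfloor$ preserves denominators in lowest terms and preserves all differences. Hence if $a-\lfloor a\rfloor=b_0/c_0>b_1/c_1>\cdots>b_n/c_n=0$ is the shortest $1$-path from part (ii), then adding $\lfloor a\rfloor$ gives a sequence from $a$ to $\lfloor a\rfloor$ with the same denominators $c_i$ and the same consecutive gaps, hence a $1$-path. The shortest-property is purely a statement about consecutive and ``skip-one'' gaps, and is therefore preserved verbatim.

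The only nontrivial obstacle is verifying the shortest-property in part (ii) at the boundary between the inverted/reversed path and the appended $0$; everything else reduces to the identity $b_ic_{i+1}-b_{i+1}c_i=1$ and the monotonicity of the denominators of convergents. Edge cases where $a$ or $1/a$ is already an integer need only be checked to ensure the construction degenerates correctly, which it does.
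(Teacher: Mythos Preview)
Your overall approach matches the paper's, but there is a genuine gap in your verification of ``shortest'' in part (i), and a smaller echo of the same issue in part (ii).

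In part (i), the definition of shortest $1$-path requires that \emph{no} proper subsequence containing the endpoints is a $1$-path. You only check that deleting a \emph{single} interior convergent breaks the $1$-path condition (via your computation $b_{i-1}c_{i+1}-b_{i+1}c_{i-1}=a_{i+1}\geq 2$). This does not rule out proper subsequences where two or more consecutive convergents have been deleted; in such a subsequence the relevant pair of now-adjacent entries is $b_i/c_i$ and $b_j/c_j$ with $j-i\geq 2$ arbitrary, and you have said nothing about that gap. The paper handles this by invoking Corollary~\ref{Cconvergentgaps}, which shows $b_i/c_i-b_j/c_j>1/(c_ic_j)$ for every $j\geq i+2$, not just $j=i+2$. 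Your recurrence computation is correct and pleasant, but you would need to extend it (e.g.\ by an easy induction showing $b_ic_j-b_jc_i\geq 2$ whenever $j\geq i+2$) to close the argument.

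In part (ii) the same issue reappears in milder form: you argue that removing \emph{only} the penultimate entry $1/B_0$ forces $C_1=1$, which is impossible. But a proper subsequence might omit $1/B_0$ together with further entries, so that the entry immediately preceding $0$ is some $C_j/B_j$ with $j\geq 1$. The fix is immediate---the same appeal to Corollary~\ref{Cincreasingsequence} gives $C_j>C_0=1$ for every $j\geq 1$---but as written your sentence only treats $j=1$. The paper phrases this by noting that $1/\lceil 1/a\rceil$ is the \emph{only} entry with numerator $1$, which covers all such $j$ at once.

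Part (iii) is fine and matches the paper.
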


\proof
To prove (i), we first note that the first entry in the negative continued fraction expansion
of $a$ is $\lceil a \rceil$.  If $b_i/c_i > b_{i+1}/c_{i+1}$ are two consecutive
convergents written in lowest terms, then Proposition \ref{Pbasiccontfrac} shows that
$b_i/c_i - b_{i+1}/c_{i+1} = 1/c_ic_{i+1}$, so the convergents form a $1$-path.
Given a proper subsequence of convergents, consider two non-consecutive
entries $b_i/c_i > b_{i+r}/c_{i+r}$ with $r > 1$.  By Corollary \ref{Cconvergentgaps}, their difference exceeds $1/c_ic_{i+r}$, so the proper subsequence is not a $1$-path. Thus the convergents in fact form the shortest $1$-path, proving (i).

We now prove (ii).  Lemma \ref{Linvertreverse} shows that the construction in (ii) yields the shortest $1$-path $P$ from
$a$ to $1/\lceil 1/a \rceil$.  Observe that $1/\lceil 1/a \rceil$ is
the only entry in this path with a numerator of $1$, since $\lceil 1/a
\rceil$ is the only integral convergent of the negative continued
fraction expansion of $1/a$.  Appending $0$ ($= 0/1$) at the end of 
$P$ keeps it a $1$-path, and the fact that $1/\lceil 1/a \rceil$ is
the only entry with numerator $1$ shows that it is the shortest
$1$-path.  This proves (ii).  Part (iii) is trivial.
\Endproof

\begin{cor}\label{Cshortestpath1}
For any non-negative rational numbers $a > a'$ such that $\lfloor a \rfloor \geq \lceil a' \rceil$, the shortest $1$-path
from $a$ to $a'$ is given by concatenating paths $P$,
$Q$, and $R$, where $P$ is the shortest $1$-path from
$a$ to $\lfloor a \rfloor$, $Q$ is the $1$-path $\lfloor a \rfloor > 
\lfloor a \rfloor - 1 > \cdots > \lceil a' \rceil + 1 > \lceil a'
\rceil$, and $R$ the shortest $1$-path from $\lceil
a' \rceil$ to $a'$.  In particular, there exists a shortest $1$-path from $a$ to $a'$.
\end{cor}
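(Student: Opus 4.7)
The plan is to verify that $P \cup Q \cup R$ is a $1$-path and then show it is shortest; this simultaneously establishes existence. Both $P$ and $R$ are available from Lemma \ref{Lpathtonearestinteger}, and $Q$ is visibly a $1$-path, since each step between consecutive integers has size $1 = 1/(1 \cdot 1)$. Gluing $P$, $Q$, and $R$ at the integer endpoints $\lfloor a \rfloor$ and $\lceil a' \rceil$ yields a $1$-path, because at each junction the adjacent elements of $P$ or $R$ meet an integer (denominator $1$), so the junction step satisfies the $1$-path condition as a consequence of the condition inside $P$ or $R$. Degenerate cases ($\lfloor a \rfloor = a$, $\lceil a' \rceil = a'$, or $\lfloor a \rfloor = \lceil a' \rceil$) are handled by reading $P$, $Q$, or $R$ as trivial.

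The heart of the proof is a combinatorial lemma: if $b/c > b'/c'$ are consecutive entries (in lowest terms) of any $1$-path, then no integer $n$ lies strictly between them. I would prove this by observing that $b/c - n \geq 1/c$ and $n - b'/c' \geq 1/c'$, since $b - nc$ and $nc' - b'$ are positive integers while $c$ and $c'$ are the respective denominators in lowest terms. Summing gives a step size of at least $(c+c')/(cc') > 1/(cc')$, contradicting the $1$-path defining condition. Consequently, every $1$-path from $a$ to $a'$ must pass through every integer $m$ with $\lceil a' \rceil \leq m \leq \lfloor a \rfloor$.

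To show $P \cup Q \cup R$ is shortest, suppose $T \subsetneq P \cup Q \cup R$ is a $1$-path from $a$ to $a'$. By the lemma $T$ must contain all elements of $Q$, including the two endpoint integers $\lfloor a \rfloor$ and $\lceil a' \rceil$. The remaining elements of $T$ then split into a $1$-path from $a$ to $\lfloor a \rfloor$ contained in $P$ and a $1$-path from $\lceil a' \rceil$ to $a'$ contained in $R$. Since $P$ and $R$ are by definition shortest $1$-paths between their respective endpoints, neither admits a proper $1$-path sub-sequence containing those endpoints, so these sub-paths must equal $P$ and $R$. Hence $T = P \cup Q \cup R$, a contradiction. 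Thus $P \cup Q \cup R$ is the shortest $1$-path from $a$ to $a'$, and in particular such a path exists. The only substantive step is the combinatorial lemma forbidding integers strictly between consecutive entries of a $1$-path; the remainder is bookkeeping from the definitions.
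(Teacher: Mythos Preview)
Your proposal is correct and follows essentially the same approach as the paper: verify that $P\cup Q\cup R$ is a $1$-path, then argue that no element of $Q$ can be omitted and invoke the shortestness of $P$ and $R$. Your combinatorial lemma (no integer lies strictly between consecutive $1$-path entries) makes explicit and rigorous what the paper states more tersely as ``this would leave two consecutive entries that differ by more than $1$.''
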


\proof
By construction, the path $S$ given by concatenating $P$, $Q$, and $R$
is clearly a $1$-path.  It is not possible to remove any element of
$Q$ from $S$ while keeping it a $1$-path, as this would leave two consecutive entries that differ by more
than $1$.  But no entry from the interior of $P$ or $R$ can be removed
either, since $P$ and $R$ are shortest $1$-paths.  So $S$ is the
shortest $1$-path.
\Endproof

\begin{cor}\label{Cshortestpath2}
For any non-negative rational numbers $a > a'$ with $\lfloor Na \rfloor \geq \lceil Na' \rceil$ and any $N > 1$, a shortest $N$-path
from $a$ to $a'$ is given by concatenating paths $P$, $Q$, and $R$, where
\begin{itemize}
\item $P$ is obtained by taking the convergents in the negative
  continued fraction expansion of $1/(Na - \lfloor Na \rfloor)$,
  inverting each convergent, reversing the order, adding $\lfloor Na
  \rfloor$ to each entry, appending $\lfloor Na \rfloor$ at the end, and then
  dividing each entry by $N$. 
\item $Q$ is the path $\lfloor Na \rfloor/N > (\lfloor Na \rfloor -
  1)/N > \cdots > (\lceil Na' \rceil + 1)/N > \lceil Na' \rceil/N$.
\item $R$ is obtained by taking the convergents of the negative
  continued fraction expansion of $Na'$ and dividing each convergent
  by $N$.
\end{itemize}
\end{cor}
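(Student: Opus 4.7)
The plan is to reduce to the $N = 1$ case handled by Corollary \ref{Cshortestpath1} and then transport back via Lemma \ref{LshortestpathnewN}. The first observation is that the hypothesis $\lfloor Na \rfloor \geq \lceil Na' \rceil$ on the pair $a > a'$ is precisely the hypothesis of Corollary \ref{Cshortestpath1} applied to the pair of non-negative rationals $Na > Na'$.

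First I would invoke Corollary \ref{Cshortestpath1} to express the shortest $1$-path from $Na$ to $Na'$ as the concatenation of three subpaths $P^\flat$, $Q^\flat$, $R^\flat$, where $P^\flat$ is the shortest $1$-path from $Na$ to $\lfloor Na \rfloor$, where $Q^\flat$ is the integer descent $\lfloor Na \rfloor > \lfloor Na \rfloor - 1 > \cdots > \lceil Na' \rceil$, and where $R^\flat$ is the shortest $1$-path from $\lceil Na' \rceil$ to $Na'$. Next I would identify each piece explicitly using Lemma \ref{Lpathtonearestinteger}. Combining parts (iii) and (ii) of that lemma describes $P^\flat$ as obtained from the convergents of the negative continued fraction expansion of $1/(Na - \lfloor Na \rfloor)$ by inverting each convergent, reversing the order, adding $\lfloor Na \rfloor$ to each entry, and appending $\lfloor Na \rfloor$ at the end; part (i) of the lemma identifies $R^\flat$ as the successive convergents of the negative continued fraction expansion of $Na'$.

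Finally I would apply Lemma \ref{LshortestpathnewN}: dividing each entry of the shortest $1$-path $P^\flat \cup Q^\flat \cup R^\flat$ by $N$ yields a shortest $N$-path from $a$ to $a'$. Matching the three resulting pieces with $P$, $Q$, $R$ as defined in the statement then completes the proof. The argument is essentially bookkeeping, and the only points requiring care are to confirm that the endpoints glue correctly (the final term of $P^\flat$ equals the initial term of $Q^\flat$, and similarly for $Q^\flat, R^\flat$) and to note that after dividing by $N$ some entries may cease to be in lowest terms — which is explicitly allowed by the formulation of Lemma \ref{LshortestpathnewN}.
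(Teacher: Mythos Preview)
Your proposal is correct and follows essentially the same approach as the paper's own proof: reduce to the $1$-path case via Lemma \ref{LshortestpathnewN}, invoke Corollary \ref{Cshortestpath1} for the decomposition into three pieces, and identify $P^\flat$ and $R^\flat$ explicitly using Lemma \ref{Lpathtonearestinteger}(iii)/(ii) and (i), respectively. The paper's proof is just a terser statement of exactly this chain of reductions.
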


\proof
This follows from Corollary \ref{Cshortestpath1}, where path $P$ is constructed using Lemmas
\ref{LshortestpathnewN} and \ref{Lpathtonearestinteger}(iii), path $Q$ is constructed using Lemma \ref{LshortestpathnewN}, and path
$R$ is constructed using Lemmas \ref{LshortestpathnewN} and \ref{Lpathtonearestinteger}(i).
\Endproof

\begin{example}\label{E3path}
A shortest $3$-path from $26/9$ to $2/5$ is given by 
$26/9 > 17/6 > 8/3 > 7/3 > 2 > 5/3 > 4/3 > 1 > 2/3 > 1/2 > 4/9 > 5/12
> 2/5.$  The paths $P$, $Q$, and $R$ from Corollary
\ref{Cshortestpath2} go from $26/9$ to $8/3$, from $8/3$ to $2/3$, and
from $2/3$ to $2/5$, respectively. 
\end{example}

\begin{prop}\label{Puniquepathexists}
For any rational numbers $a > a' \geq 0$ and any positive integer $N$,
there exists a unique shortest $N$-path from $a$ to $a'$.
\end{prop}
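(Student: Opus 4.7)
The plan is as follows. Uniqueness is Proposition \ref{Punique}, so only existence requires work. By Lemma \ref{LshortestpathnewN}, a shortest $N$-path from $a$ to $a'$ exists if and only if a shortest $1$-path from $Na$ to $Na'$ exists, so I would immediately reduce to the case $N = 1$.

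For $N = 1$, write $a = p/q$ and $a' = p'/q'$ in lowest terms (integers getting denominator $1$), and argue by strong induction on $\max(q, q')$. If $\lfloor a \rfloor \geq \lceil a' \rceil$, then Corollary \ref{Cshortestpath1} produces the shortest $1$-path explicitly; this both handles Case 1 and covers the base case $\max(q, q') = 1$ (both $a, a'$ integers), since then $\lfloor a \rfloor = a \geq a' + 1 > a' = \lceil a' \rceil$.

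In the remaining Case 2, $\lfloor a \rfloor < \lceil a' \rceil$, and consequently $a$ and $a'$ must lie in a common open integer interval $(m, m+1)$, with neither an integer. Setting $\tilde a = a - m$ and $\tilde a' = a' - m$, translation by $m$ preserves both the lowest-terms expression (since $\gcd(p_i + mq_i, q_i) = \gcd(p_i, q_i)$) and consecutive differences along $1$-paths, so a shortest $1$-path from $a$ to $a'$ exists iff one from $\tilde a$ to $\tilde a'$ exists. By Lemma \ref{Linvertreverse}, whose proof shows that ``invert and reverse'' is an involution carrying $1$-paths to $1$-paths and preserving shortness, this is in turn equivalent to the existence of a shortest $1$-path from $1/\tilde a'$ to $1/\tilde a$. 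Since $\tilde a = (p - mq)/q$ is in lowest terms with $p - mq < q$, the reciprocal $1/\tilde a$ has denominator $p - mq < q$; likewise $1/\tilde a'$ has denominator strictly less than $q'$. Thus $\max$ of the new denominators strictly decreases, and the inductive hypothesis closes the argument.

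The main obstacle is Case 2: when $a$ and $a'$ lie in a common open integer interval, Corollary \ref{Cshortestpath1} is inapplicable, and one must combine the integer translation with the inversion symmetry of Lemma \ref{Linvertreverse} in order to genuinely shrink the denominators and enable the induction. No circularity arises, since Lemma \ref{Linvertreverse}, Corollary \ref{Cshortestpath1}, and Proposition \ref{Punique} are each established directly from earlier results.
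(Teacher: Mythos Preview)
Your proposal is correct and follows essentially the same route as the paper: reduce to $N=1$ via Lemma \ref{LshortestpathnewN}, invoke Proposition \ref{Punique} for uniqueness, dispose of the case where an integer lies between $a$ and $a'$ using Corollary \ref{Cshortestpath1}, and otherwise translate into $(0,1)$ and invert via Lemma \ref{Linvertreverse} to shrink the denominators. The only cosmetic difference is that the paper inducts on $\min(c,c')$ while you induct on $\max(q,q')$; both choices make the induction go through.
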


\proof
By Lemma \ref{LshortestpathnewN}, we may assume $N = 1$.  Uniqueness follows from Proposition \ref{Punique}.  Let $a = b/c$ and $a' = b'/c'$ in lowest terms.  We use strong induction on $\min(c, c')$.  If either is $1$, then either $a$ or $a'$ is an integer, so $\lfloor a \rfloor \geq \lceil a' \rceil$, and existence follows from Corollary \ref{Cshortestpath1}.  In any case, if there is an integer between $a$ and $a'$, we are done by Corollary \ref{Cshortestpath1}, so assume not.  Subtracting an integer from each entry in a given sequence preserves $1$-paths, and by assumption, there is no integer between $a$ and $a'$ inclusive, so we may assume that $1 > a = b/c > a' = b'/c' > 0$.  By Lemma \ref{Linvertreverse}, it suffices to exhibit a $1$-path from $c'/b'$ to $c/b$.  Since $b < c$ and $b' < c'$, we are done by induction. 
\Endproof

\bibliographystyle{alpha}
\bibliography{main}

\begin{thebibliography}{FKW17}

\bibitem[Art75]{artin75}
Michael Artin.
\newblock Wildly ramified $\mathbb{Z}/2$ actions in dimension two.
\newblock {\em Proceedings of the American Mathematical Society}, 52(1):60--64,
  1975.

\bibitem[BM00]{BM:df}
Jos{\'e} Bertin and Ariane M{\'e}zard.
\newblock D\'eformations formelles des rev\^etements sauvagement ramifi\'es de
  courbes alg\'ebriques.
\newblock {\em Invent. Math.}, 141(1):195--238, 2000.

\bibitem[Bri68]{brieskorn68}
Egbert Brieskorn.
\newblock {Rationale Singularit{\"a}ten komplexer Fl{\"a}chen}.
\newblock {\em Inventiones mathematicae}, 4(5):336--358, 1968.

\bibitem[CES03]{CES:cf}
Brian Conrad, Bas Edixhoven, and William Stein.
\newblock {$J_1(p)$} has connected fibers.
\newblock {\em Doc. Math.}, 8:331--408, 2003.

\bibitem[CK03]{CK:ed}
Gunther Cornelissen and Fumiharu Kato.
\newblock Equivariant deformation of {M}umford curves and of ordinary curves in
  positive characteristic.
\newblock {\em Duke Math. J.}, 116(3):431--470, 2003.

\bibitem[FKW17]{fkw}
Anne Fr{\"u}hbis-Kr{\"u}ger and Stefan Wewers.
\newblock Desingularization of arithmetic surfaces: Algorithmic aspects.
\newblock In Gebhard B{\"o}ckle, Wolfram Decker, and G{\"u}nter Malle, editors,
  {\em Algorithmic and Experimental Methods in Algebra, Geometry, and Number
  Theory}. Springer, 2017.

\bibitem[Har77]{Ha:ag}
Robin Hartshorne.
\newblock {\em Algebraic geometry}.
\newblock Springer-Verlag, New York-Heidelberg, 1977.
\newblock Graduate Texts in Mathematics, No. 52.

\bibitem[Har80]{Ha:mp}
David Harbater.
\newblock Moduli of {$p$}-covers of curves.
\newblock {\em Comm. Algebra}, 8(12):1095--1122, 1980.

\bibitem[IS12]{itoschroeer}
Hiroyuki Ito and Stefan Schr{\"o}er.
\newblock Wild quotient surface singularities whose dual graphs are not
  star-shaped.
\newblock arXiv:1209.3605, to appear in {\em Asian J.~Math.}, 2012.

\bibitem[JT80]{JonesThron}
William~B. Jones and Wolfgang~J. Thron.
\newblock {\em Continued fractions}, volume~11 of {\em Encyclopedia of
  Mathematics and its Applications}.
\newblock Addison-Wesley Publishing Co., Reading, Mass., 1980.
\newblock Analytic theory and applications, With a foreword by Felix E.
  Browder, With an introduction by Peter Henrici.

\bibitem[Lip69]{Li:rs}
Joseph Lipman.
\newblock Rational singularities, with applications to algebraic surfaces and
  unique factorization.
\newblock {\em Inst. Hautes \'Etudes Sci. Publ. Math.}, 36:195--279, 1969.

\bibitem[Lip78]{Lipman:desingularization}
Joseph Lipman.
\newblock Desingularization of two-dimensional schemes.
\newblock {\em Annals of Mathematics}, 107(2):151--207, 1978.

\bibitem[Liu02]{Li:ag}
Qing Liu.
\newblock {\em Algebraic geometry and arithmetic curves}, volume~6 of {\em
  Oxford Graduate Texts in Mathematics}.
\newblock Oxford University Press, Oxford, 2002.
\newblock Translated from the French by Reinie Ern{\'e}, Oxford Science
  Publications.

\bibitem[Lor13]{Lo:wqs}
Dino Lorenzini.
\newblock Wild quotient singularities of surfaces.
\newblock {\em Math. Z.}, 275(1-2):211--232, 2013.

\bibitem[Lor14]{Lo:wm}
Dino Lorenzini.
\newblock Wild models of curves.
\newblock {\em Algebra Number Theory}, 8(2):331--367, 2014.

\bibitem[Lor18]{Lo:wq}
Dino Lorenzini.
\newblock Wild quotients of products of curves.
\newblock {\em Eur. J. Math.}, 4(2):525--554, 2018.

\bibitem[Mac36]{Ma:ca}
Saunders MacLane.
\newblock A construction for absolute values in polynomial rings.
\newblock {\em Trans. Amer. Math. Soc.}, 40(3):363--395, 1936.

\bibitem[Obu14]{Ob:cw}
Andrew Obus.
\newblock Conductors of wild extensions of local fields, especially in mixed
  characteristic {$(0,2)$}.
\newblock {\em Proc. Amer. Math. Soc.}, 142(5):1485--1495, 2014.

\bibitem[R{\"u}t14]{Ru:mc}
Julian R{\"u}th.
\newblock Models of curves and valuations.
\newblock Ph.D. Thesis, available at http://dx.doi.org/10.18725/OPARU-3275,
  2014.

\bibitem[Ser68]{Se:lf}
Jean-Pierre Serre.
\newblock {\em Corps locaux}.
\newblock Hermann, Paris, 1968.
\newblock Deuxi{\`e}me {\'e}dition, Publications de l'Universit{\'e} de
  Nancago, No. VIII.

\bibitem[Ser97]{Se:gc}
Jean-Pierre Serre.
\newblock {\em Galois cohomology}.
\newblock Springer-Verlag, Berlin, 1997.
\newblock Translated from the French by Patrick Ion and revised by the author.

\bibitem[Ste18]{Steck18}
Christian Steck.
\newblock {\em Resolution of tame cyclic quotient singularities on fibered
  surfaces}.
\newblock PhD thesis, {Ulm university}, 2018.

\end{thebibliography}

\end{document}